\tikzset{new spy style/.style={spy scope={%
	magnification=5,
	size=1.25cm,
	connect spies,
	every spy on node/.style={
		rectangle,
		draw,
	},
	every spy in node/.style={
		draw,
		rectangle,
		fill=gray!40,
	}
}}}
\newcommand{\RR}{{\mathbb{R}}}
\newcommand{\NN}{{\mathbb{N}}}
\newcommand{\CC}{{\mathbb{C}}}
\newcommand{\trans}{{\sf T}}
\DeclareMathOperator{\acos}{acos}
\DeclareMathOperator{\asin}{asin}
\newcommand{\EE}{{\rm E}}
\DeclareMathOperator{\tr}{tr}
\DeclareMathOperator{\diag}{\rm diag}
\newcommand{\RED}{\color[rgb]{0.70,0,0}}
\newcommand{\BLUE}{\color[rgb]{0,0,0.69}}
\newcommand{\GREEN}{\color[rgb]{0,0.6,0}}
\newcounter{ctheorem}
\newtheorem{theorem}[ctheorem]{Theorem}
\newcounter{cassumption}
\newtheorem{assumption}[cassumption]{Assumption}
\newcounter{cproposition}
\newtheorem{proposition}[cproposition]{Proposition}
\newcounter{ccorollary}
\newtheorem{corollary}[ccorollary]{Corollary}
\newcounter{clemma}
\newtheorem{lemma}[clemma]{Lemma}
\newcounter{cconj}
\newtheorem{conj}[cconj]{Conjecture}
\newcounter{cremark}
\newtheorem{remark}[cremark]{Remark}
\begin{document}

\begin{frontmatter}

\title{A Random Matrix Approach\\ to Neural Networks}
\runtitle{A Random Matrix Approach to Neural Networks}

\begin{aug}
	\author{\fnms{Cosme} \snm{Louart}\ead[label=e1]{cosme.louart@ens.fr}},
	\author{\fnms{Zhenyu} \snm{Liao}\ead[label=e2]{zhenyu.liao@centralesupelec.fr}},
	\and
	\author{\fnms{Romain} \snm{Couillet}\corref{}\thanksref{t3}\ead[label=e3]{romain.couillet@centralesupelec.fr}}

	\thankstext{t3}{Couillet's work is supported by the ANR Project RMT4GRAPH (ANR-14-CE28-0006).}

	\runauthor{C.\@ Louart et al.}
	
	\affiliation{CentraleSup\'elec, University of Paris--Saclay, France.}
\end{aug}

%\author{Cosme Louart\thanks{The authors are with the LSS Signals and Statistics group, CentraleSup\'elec, University of Paris--Saclay, France.} \and Zhenyu Liao\footnotemark[1] \and Romain Couillet\footnotemark[1]}

%\maketitle

\begin{abstract}
	This article studies the Gram random matrix model $G=\frac1T\Sigma^\trans\Sigma$, $\Sigma=\sigma(WX)$, classically found in \textcolor{black}{the analysis of random feature maps and} random neural networks, where $X=[x_1,\ldots,x_T]\in\RR^{p\times T}$ is a (data) matrix of bounded norm, $W\in\RR^{n\times p}$ is a matrix of independent zero-mean unit variance entries, and $\sigma:\RR\to\RR$ is a Lipschitz continuous (activation) function --- $\sigma(WX)$ being understood entry-wise. \textcolor{black}{By means of a key concentration of measure lemma arising from non-asymptotic random matrix arguments,} we prove that, as $n,p,T$ grow large at the same rate, the resolvent $Q=(G+\gamma I_T)^{-1}$, for $\gamma>0$, has a similar behavior as that met in sample covariance matrix models, involving notably the moment $\Phi=\frac{T}n\EE[G]$, which provides in passing a deterministic equivalent for the empirical spectral measure of $G$. Application-wise, this result enables the estimation of the asymptotic performance of single-layer random neural networks. This in turn provides practical insights into the underlying mechanisms into play in random neural networks, entailing several unexpected consequences, as well as a fast practical means to tune the network hyperparameters.
\end{abstract}	

\begin{keyword}[class=MSC]
\kwd[Primary ]{60B20}
\kwd[; secondary ]{62M45}
\end{keyword}

\end{frontmatter}

\section{Introduction}

Artificial neural networks, developed in the late fifties \citep{ROS58} in an attempt to develop machines capable of brain-like behaviors, know today an unprecedented research interest, notably in its applications to computer vision and machine learning at large \citep{KRI12,SCH15} where superhuman performances on specific tasks are now commonly achieved. Recent progress in neural network performances however find their source in the processing power of modern computers as well as in the availability of large datasets rather than in the development of new mathematics. In fact, for lack of appropriate tools to understand the theoretical behavior of the non-linear activations and deterministic data dependence underlying these networks, the discrepancy between mathematical and practical (heuristic) studies of neural networks has kept widening. A first salient problem in harnessing neural networks lies in their being completely designed upon a deterministic training dataset $X=[x_1,\ldots,x_T]\in\RR^{p\times T}$, so that their resulting performances intricately depend first and foremost on $X$. Recent works have nonetheless established that, when smartly designed, mere randomly connected neural networks can achieve performances close to those reached by entirely data-driven network designs \citep{RAH07,SAX11}. As a matter of fact, to handle gigantic databases, the computationally expensive learning phase (the so-called backpropagation of the error method) typical of deep neural network structures becomes impractical, while it was recently shown that smartly designed single-layer random networks (as studied presently) can already reach superhuman capabilities \citep{CAM15} and beat expert knowledge in specific fields \citep{JAE04}. These various findings have opened the road to the study of neural networks by means of statistical and probabilistic tools \citep{CHO14,GIR15}. The second problem relates to the non-linear activation functions present at each neuron, which have long been known (as opposed to linear activations) to help design universal approximators for any input-output target map \citep{HOR89}.

In this work, we propose an original random matrix-based approach to understand the end-to-end regression performance of single-layer random artificial neural networks, sometimes referred to as extreme learning machines \citep{HUA06,HUA12}, when the number $T$ and size $p$ of the input dataset are large and scale proportionally with the number $n$ of neurons in the network. \textcolor{black}{These networks can also be seen, from a more immediate statistical viewpoint, as a mere linear ridge-regressor relating a {\it random feature map} $\sigma(WX)\in\RR^{n\times T}$ of explanatory variables $X=[x_1,\ldots,x_T]\in\RR^{p\times T}$ and target variables $y=[y_1,\ldots,y_T]\in\RR^{d\times T}$, for $W\in\RR^{n\times p}$ a randomly designed matrix and $\sigma(\cdot)$ a non-linear $\RR\to\RR$ function (applied component-wise)}. Our approach has several interesting features both for theoretical and practical considerations. It is first one of the few known attempts to move the random matrix realm away from matrices with independent or linearly dependent entries. Notable exceptions are the line of works surrounding kernel random matrices \citep{ELK10,COU16} as well as large dimensional robust statistics models \citep{COU13b,KAR13,ZHA14}. Here, to alleviate the non-linear difficulty, we exploit concentration of measure arguments \citep{LED05} \textcolor{black}{for non-asymptotic random matrices}, thereby pushing further the original ideas of \citep{ELK09,VER12} established for simpler random matrix models. While we believe that more powerful, albeit more computational intensive, tools (such as an appropriate adaptation of the Gaussian tools advocated in \citep{PAS11}) cannot be avoided to handle advanced considerations in neural networks, we demonstrate here that the concentration of measure phenomenon allows one to fully characterize the main quantities at the heart of the single-layer regression problem at hand. %If not capable of cleanly handling difficult questions, it nonetheless also allows to exhibit clear enough insights and to obtain trustable heuristic proof arguments which are essential for the understanding of more elaborate multi-layer networks in particular. 

In terms of practical applications, our findings shed light on the already incompletely understood extreme learning machines which have proved extremely efficient in handling machine learning problems involving large to huge datasets \citep{HUA12,CAM15} at a computationally affordable cost. But our objective is also to pave to path to the understanding of more involved neural network structures, featuring notably multiple layers and some steps of learning by means of backpropagation of the error.

\bigskip

Our main contribution is twofold. \textcolor{black}{From a theoretical perspective, we first obtain a key lemma, Lemma~\ref{lem:concentration_quadform}, on the concentration of quadratic forms of the type $\sigma(w^\trans X)A\sigma(X^\trans w)$ where $w=\varphi(\tilde{w})$, $\tilde w\sim \mathcal N(0,I_p)$, with $\varphi:\RR\to\RR$ and $\sigma:\RR\to\RR$ Lipschitz functions, and $X\in\RR^{p\times T}$, $A\in\RR^{n\times n}$ are deterministic matrices. This {\it non-asymptotic} result (valid for all $n,p,T$) is then exploited under a simultaneous growth regime for $n,p,T$ and boundedness conditions on $\|X\|$ and $\|A\|$ to obtain, in Theorem~\ref{th:EQ}, a deterministic approximation $\bar Q$ of the resolvent $\EE[Q]$, where $Q=(\frac1T\Sigma^\trans\Sigma+\gamma I_T)^{-1}$, $\gamma>0$, $\Sigma=\sigma(WX)$, for some $W=\varphi(\tilde{W})$, $\tilde W\in\RR^{n\times p}$ having independent $\mathcal N(0,1)$ entries. As the resolvent of a matrix (or operator) is an important proxy for the characterization of its spectrum (see e.g., \citep{PAS11,AKH93}), this result therefore allows for the characterization of the asymptotic spectral properties of $\frac1T\Sigma^\trans\Sigma$, such as its limiting spectral measure in Theorem~\ref{th:lsd}.}

\textcolor{black}{
	Application-wise, the theoretical findings are an important preliminary step for the understanding and improvement of various statistical methods based on random features in the large dimensional regime. Specifically, here, we consider the question of linear ridge-regression from random feature maps, which coincides with the aforementioned single hidden-layer random neural network known as extreme learning machine. We show that, under mild conditions, both the {\it training} $E_{\rm train}$ and {\it testing} $E_{\rm test}$ mean-square errors, respectively corresponding to the regression errors on known input-output pairs $(x_1,y_1),\ldots,(x_T,y_T)$ (with $x_i\in\RR^p$, $y_i\in\RR^d$) and unknown pairings $(\hat x_1,\hat y_1),\ldots,(\hat x_{\hat T},\hat y_{\hat T})$, almost surely converge to deterministic limiting values as $n,p,T$ grow large at the same rate (while $d$ is kept constant) for every fixed ridge-regression parameter $\gamma>0$. 
}
Simulations on real image datasets are provided that corroborate our results.

\medskip

These findings provide new insights into the roles played by the activation function $\sigma(\cdot)$ and the random distribution of the entries of $W$ \textcolor{black}{in random feature maps as well as by} the ridge-regression parameter $\gamma$ in the neural network performance. We notably exhibit and prove some peculiar behaviors, such as the impossibility for the network to carry out elementary Gaussian mixture classification tasks, when either the activation function or the random weights distribution are ill chosen.

Besides, for the practitioner, the theoretical formulas retrieved in this work allow for a fast offline tuning of the aforementioned hyperparameters of the neural network, notably when $T$ is not too large compared to $p$. The graphical results provided in the course of the article were particularly obtained within a $100$- to $500$-fold gain in computation time between theory and simulations.

\bigskip

The remainder of the article is structured as follows: in Section~\ref{sec:model}, we introduce the mathematical model of the system under investigation. Our main results are then described and discussed in Section~\ref{sec:results}, the proofs of which are deferred to Section~\ref{sec:proofs}. Section~\ref{sec:discussion} discusses our main findings. The article closes on concluding remarks on envisioned extensions of the present work in Section~\ref{sec:conclusion}. The appendix provides some intermediary lemmas of constant use throughout the proof section.

\bigskip

{\it Reproducibility:} Python~3 codes used to produce the results of Section~\ref{sec:discussion} are available at \href{https://github.com/Zhenyu-LIAO/RMT4ELM}{https://github.com/Zhenyu-LIAO/RMT4ELM}

\bigskip

{\it Notations:} The norm $\|\cdot\|$ is understood as the Euclidean norm for vectors and the operator norm for matrices, while the norm $\|\cdot\|_F$ is the Frobenius norm for matrices. All vectors in the article are understood as column vectors.

\section{System Model}
\label{sec:model}

We consider a \textcolor{black}{ridge-regression task on random feature maps} defined as follows. Each input data $x\in\RR^p$ is multiplied by a matrix $W\in\RR^{n\times p}$; a non-linear function $\sigma:\RR\to\RR$ is then applied entry-wise to the vector $Wx$\textcolor{black}{, thereby providing a set of $n$ random features $\sigma(Wx)\in\RR^n$ for each datum $x\in\RR^p$}. The output $z\in\RR^d$ of the \textcolor{black}{linear regression} is the inner product $z=\beta^\trans \sigma(Wx)$ for some matrix $\beta\in\RR^{n\times d}$ to be designed.
%\begin{align}
%	\label{eq:z}
%	z &= \beta^\trans \sigma(Wx).
%\end{align}

\textcolor{black}{From a neural network viewpoint,} the $n$ neurons of the network are the virtual units operating the mapping $W_{i\cdot}x\mapsto \sigma(W_{i\cdot}x)$ \textcolor{black}{($W_{i\cdot}$ being the $i$-th row of $W$)}, for $1\leq i\leq n$. The neural network then operates in two phases: a training phase where the regression matrix $\beta$ is learned based on a known input-output dataset pair $(X,Y)$ and a testing phase where, for $\beta$ now fixed, the network operates on a new input dataset $\hat X$ with corresponding unknown output $\hat Y$.

\bigskip

During the training phase, based on a set of known input $X=[x_1,\ldots,x_T]\in\RR^{p\times T}$ and output $Y=[y_1,\ldots,y_T]\in\RR^{d\times T}$ datasets, the matrix $\beta$ is chosen so as to minimize the mean square error $\frac1T\sum_{i=1}^T\|z_i-y_i\|^2+\gamma \|\beta\|_F^2$, where $z_i=\beta^\trans \sigma(Wx_i)$ and $\gamma>0$ is some regularization factor. \textcolor{black}{Solving for $\beta$,} this leads to the explicit {\it ridge-regressor}
\begin{align*}
	\beta &= \frac1T\Sigma \left( \frac1T\Sigma^\trans\Sigma + \gamma I_T \right)^{-1}Y^\trans
\end{align*}
where we defined $\Sigma\equiv \sigma(WX)$.
\textcolor{black}{This follows from differentiating the mean square error along $\beta$ to obtain $0 = \gamma \beta + \frac1T\sum_{i=1}^T \sigma(Wx_i)(\beta^\trans \sigma(Wx_i)-y_i)^\trans$, so that $(\frac1T\Sigma\Sigma^\trans + \gamma I_n )\beta = \frac1T \Sigma Y^\trans$ which, along with $(\frac1T\Sigma\Sigma^\trans + \gamma I_n )^{-1}\Sigma=\Sigma(\frac1T\Sigma^\trans\Sigma + \gamma I_T )^{-1}$, gives the result.
}

In the remainder, we will also denote 
\begin{align*}
	Q\equiv \left( \frac1T\Sigma^\trans\Sigma + \gamma I_T\right)^{-1}
\end{align*}
the {\it resolvent} of $\frac1T\Sigma^\trans\Sigma$.
%The main theoretical interest of this article is to provide preliminary results on the behavior of the random matrix $Q$ which intervenes as a key quantity in the evaluation of the neural network performances. 
\textcolor{black}{The matrix $Q$ naturally appears as a key quantity in the performance analysis of the neural network.} Notably, the mean-square error $E_{\rm train}$ on the training dataset $X$ is given by
\begin{align}
	\label{eq:Etrain}
	E_{\rm train} &= \frac1T\left\| Y^\trans - \Sigma^\trans \beta \right\|_F^2 = \frac{\gamma^2}T \tr Y^\trans Y Q^2.
\end{align}
Under the growth rate assumptions on $n,p,T$ taken below, it shall appear that the random variable $E_{\rm train}$ concentrates around its mean, letting then appear $\EE[Q^2]$ as a central object in the asymptotic evaluation of $E_{\rm train}$.

\bigskip

The testing phase of the neural network is more interesting in practice as it unveils the actual performance of neural networks. For a test dataset $\hat{X}\in\RR^{p\times \hat T}$ of length $\hat T$, with unknown output $\hat Y\in\RR^{d\times \hat T}$, the test mean-square error is defined by
\begin{align*}
	E_{\rm test} &= \frac1T\left\| \hat{Y}^\trans - \hat{\Sigma}^\trans \beta \right\|_F^2
\end{align*}
where $\hat{\Sigma}=\sigma(W\hat X)$ and $\beta$ is the same as used in \eqref{eq:Etrain} (and thus only depends on $(X,Y)$ and $\gamma$).
One of the key questions in the analysis of such an elementary neural network lies in the determination of $\gamma$ which minimizes $E_{\rm test}$ (and is thus said to have good {\it generalization} performance). Notably, small $\gamma$ values are known to reduce $E_{\rm train}$ but to induce the popular {\it overfitting} issue which generally increases $E_{\rm test}$, while large $\gamma$ values engender both large values for $E_{\rm train}$ and $E_{\rm test}$.

From a mathematical standpoint though, the study of $E_{\rm test}$ brings forward some technical difficulties that do not allow for as a simple treatment through the present concentration of measure methodology as the study of $E_{\rm train}$. Nonetheless, the analysis of $E_{\rm train}$ allows at least for heuristic approaches to become available, which we shall exploit to propose an asymptotic deterministic approximation for $E_{\rm test}$.

\bigskip

%*To study the large data and network behavior of $Q$ and $E_{\rm train}$, we shall make the following growth rate assumptions.
%\begin{assumption}[Growth Rate]
%	\label{ass:growth}
%	As $n\to\infty$, $0<\liminf_n \min\{p/n,T/n\}\leq \limsup_n \max\{p/n,T/n\}<\infty$. In addition, 
%	\begin{align*}
%		%\limsup_n \max_{i,j}|\sqrt{n}X_{ij}| &< \infty \\
%		\limsup_n \|X\| &<\infty \\
%		\limsup_n \max_{ij} |Y_{ij}| &< \infty. 
%		%\\ \limsup_n \|\hat{X}\hat{X}^\trans\|   &<\infty.
%	\end{align*}
%\end{assumption}
%*
%\medskip

\textcolor{black}{From a technical standpoint, we shall make the following set of assumptions on the mapping $x\mapsto \sigma(Wx)$.}

\begin{assumption}[Subgaussian $W$]
	\label{ass:W}
	%The matrix $W$ has independent $\mathcal N(0,1)$ entries. 
	The matrix $W$ is defined by 
	\begin{align*}
		W=\varphi(\tilde{W})
	\end{align*}
(understood entry-wise), where $\tilde{W}$ has independent and identically distributed $\mathcal N(0,1)$ entries and $\varphi(\cdot)$ is $\lambda_{\varphi}$-Lipschitz. %Besides, $\EE[W_{11}]=0$, $\EE[W_{11}^2]=1$.
\end{assumption}
For $a=\varphi(b)\in\RR^\ell$, $\ell\geq 1$, with $b\sim\mathcal N(0,I_\ell)$, we shall subsequently denote $a\sim \mathcal N_{\varphi}(0,I_\ell)$.

Under the notations of Assumption~\ref{ass:W}, we have in particular $W_{ij}\sim \mathcal N(0,1)$ if $\varphi(t)=t$ and $W_{ij}\sim \mathcal U(-1,1)$ (the uniform distribution on $[-1,1]$) if $\varphi(t)=-1+2\frac1{\sqrt{2\pi}}\int_t^\infty e^{-x^2}dx$ ($\varphi$ is here a $\sqrt{2/\pi}$-Lipschitz map).

\medskip

We further need the following regularity condition on the function $\sigma$.
\begin{assumption}[Function $\sigma$]
	\label{ass:sigma}
	The function $\sigma$ is Lipschitz continuous with parameter $\lambda_\sigma$.
\end{assumption}

This assumption holds for many of the activation functions traditionally considered in neural networks, such as sigmoid functions, the rectified linear unit $\sigma(t)=\max(t,0)$, or the absolute value operator.

\medskip 

\textcolor{black}{
	When considering the interesting case of simultaneously large data and random features (or neurons), we shall then make the following growth rate assumptions.
\begin{assumption}[Growth Rate]
	\label{ass:growth}
	As $n\to\infty$, 
	\begin{align*}
		0<\liminf_n \min\{p/n,T/n\}\leq \limsup_n \max\{p/n,T/n\}<\infty
	\end{align*}
	while $\gamma,\lambda_\sigma,\lambda_\varphi>0$ and $d$ are kept constant. In addition, 
	\begin{align*}
		\limsup_n \|X\| &<\infty \\
		\limsup_n \max_{ij} |Y_{ij}| &< \infty. 
	\end{align*}
\end{assumption}
}
%The two first assumption will constantly hold but the last one will be only reserved for practical results giving estimates to the quadratic errors $E_{\rm train}$ and $E_{\rm test}$. In the general cases, $n$, $T$ and $p$ will be considered as non asymptotic variables. They will be mere parameters of the problem as well as $\|X\|$ and $\|Y\|_{\infty}=\max_{ij} |Y_{ij}|$.}

\section{Main Results}
\label{sec:results}

\subsection{Main technical results and training performance}

\textcolor{black}{As a standard preliminary step in the {\it asymptotic} random matrix analysis of the expectation $\EE[Q]$ of the resolvent $Q=(\frac1T\Sigma^\trans \Sigma+\gamma I_T)^{-1}$, a convergence of quadratic forms based on the row vectors of $\Sigma$ is necessary (see e.g., \citep{MAR67,SIL95}). Such results are usually obtained by exploiting the independence (or linear dependence) in the vector entries. This not being the case here, as the entries of the vector $\sigma(X^\trans w)$ are in general not independent, we resort to a concentration of measure approach, as advocated in \citep{ELK09}. The following lemma, stated here in a {\it non-asymptotic} random matrix regime (that is, without necessarily resorting to Assumption~\ref{ass:growth}), and thus of independent interest, provides this concentration result.}
\textcolor{black}{For this lemma, we need first to define the following key matrix}
\begin{align}
	\label{eq:Phi}
	\Phi &= \EE\left[ \sigma(w^\trans X)^\trans \sigma(w^\trans X) \right]
\end{align}
of size $T\times T$, where $w\sim \mathcal N_\varphi(0,I_p)$.

\begin{lemma}[Concentration of quadratic forms]
	\label{lem:concentration_quadform}
	Let \textcolor{black}{Assumptions~\ref{ass:W}--\ref{ass:sigma}} hold. \textcolor{black}{Let also $A \in \RR^{T\times T}$ such that $\| A \| \leq 1$} and, for $X\in\RR^{p\times T}$ and $w\sim \mathcal N_\varphi(0,I_p)$, define the random vector $\sigma\equiv \sigma(w^\trans X)^\trans\in\RR^T$. Then, 
	\begin{align*}
		\textcolor{black}{P\left( \left| \frac1T\sigma^\trans A \sigma - \frac1T\tr \Phi A  \right| > t \right) } & \textcolor{black}{\leq C e^{-\frac{cT}{\|X \|^2 \lambda_{\varphi}^2\lambda_{\sigma}^2} \min\left(\frac{t^2}{t_0^2}, t\right)}}
	\end{align*}
	\textcolor{black}{for $t_0\equiv |\sigma(0)| + \lambda_\varphi\lambda_{\sigma} \|X \| \sqrt{\frac{p}T}$ and $C,c>0$ independent of all other parameters. In particular, under the additional Assumption~\ref{ass:growth},
	\begin{align*}
		P\left( \left| \frac1T\sigma^\trans A \sigma - \frac1T\tr \Phi A  \right| > t \right) \leq C e^{-c n \min(t,t^2)}
	\end{align*}
	for some $C,c>0$.
	}
\end{lemma}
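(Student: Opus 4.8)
The strategy is to exhibit $\sigma^\trans A\sigma/T$ as a Lipschitz function of the Gaussian vector $\tilde w\sim\mathcal N(0,I_p)$ and then invoke the standard Gaussian concentration inequality together with a two-regime (Gaussian/exponential) tail bound for the squared-Lipschitz variable. First I would write $w=\varphi(\tilde w)$ with $\tilde w\sim\mathcal N(0,I_p)$, so that $\sigma=\sigma(X^\trans\varphi(\tilde w))$. Since $\varphi$ is $\lambda_\varphi$-Lipschitz and $\sigma$ is $\lambda_\sigma$-Lipschitz, the map $\tilde w\mapsto\sigma$ is Lipschitz with constant at most $\lambda_\sigma\lambda_\varphi\|X\|$: indeed $\|\sigma(X^\trans\varphi(\tilde w))-\sigma(X^\trans\varphi(\tilde w'))\|\le\lambda_\sigma\|X^\trans(\varphi(\tilde w)-\varphi(\tilde w'))\|\le\lambda_\sigma\|X\|\lambda_\varphi\|\tilde w-\tilde w'\|$. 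A preliminary bound on $\|\sigma\|$ is also needed: $\|\sigma\|\le\|\sigma(0)\mathbf 1_T\|+\lambda_\sigma\|X^\trans\varphi(\tilde w)\|$, and $\EE\|\sigma\|^2\le 2T\sigma(0)^2+2\lambda_\sigma^2\|X\|^2\EE\|\varphi(\tilde w)\|^2\lesssim T t_0^2$ after controlling $\EE\|\varphi(\tilde w)\|^2\le 2p\varphi(0)^2+2\lambda_\varphi^2 p$ (absorbing the $\varphi(0)$ term into constants, or noting it enters $t_0$ through the rescaled $\|X\|\sqrt{p/T}$ term); concentration of $\|\varphi(\tilde w)\|$ around its mean handles the fluctuations.

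Next I would treat the quadratic form via the decomposition $\sigma^\trans A\sigma/T - \tr\Phi A/T = \big(\sigma^\trans A\sigma - \EE[\sigma^\trans A\sigma]\big)/T$, using $\EE[\sigma\sigma^\trans]=\Phi$ from \eqref{eq:Phi} so that $\EE[\sigma^\trans A\sigma]=\tr\Phi A$. The function $f(\tilde w)=\sigma(X^\trans\varphi(\tilde w))^\trans A\,\sigma(X^\trans\varphi(\tilde w))/T$ is not globally Lipschitz (a quadratic form grows), but it is {\it locally} Lipschitz: $|f(\tilde w)-f(\tilde w')|\le\frac1T\|A\|\,(\|\sigma\|+\|\sigma'\|)\,\|\sigma-\sigma'\|\le\frac{\lambda_\sigma\lambda_\varphi\|X\|}{T}(\|\sigma\|+\|\sigma'\|)\|\tilde w-\tilde w'\|$ using $\|A\|\le1$. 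The clean way is to use the bound on the {\it squared} Lipschitz-composition: write $g(\tilde w)=\sigma(X^\trans\varphi(\tilde w))$ which is $\eta$-Lipschitz with $\eta=\lambda_\sigma\lambda_\varphi\|X\|$, note $\|g(\tilde w)\|\le\|g(0)\|+\eta\|\tilde w\|$ with $\|g(0)\|\le\sqrt T|\sigma(0)|$, and bound $|f(\tilde w)-f(\tilde w')|\le\frac{\eta}{T}\big(2\sqrt T|\sigma(0)|+\eta(\|\tilde w\|+\|\tilde w'\|)\big)\|\tilde w-\tilde w'\|$; hence on the ball $\|\tilde w\|\le R$ the function $f$ is $L_R$-Lipschitz with $L_R=\frac{\eta}{T}(2\sqrt T|\sigma(0)|+2\eta R)$.

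The final step assembles the tail bound. Splitting on the event $\{\|\tilde w\|\le R\}$ with $R$ of order $\sqrt p$ (so that $P(\|\tilde w\|>R)$ is itself exponentially small by Gaussian concentration of the norm), on this event $f$ restricted to the ball — extended to a globally $L_R$-Lipschitz function by the usual infimal-convolution extension — satisfies Gaussian concentration $P(|f-\EE f|>t,\ \|\tilde w\|\le R)\le 2e^{-ct^2/L_R^2}$; plugging $L_R\asymp\eta(|\sigma(0)|/\sqrt T+\eta\sqrt p/T)=\eta t_0/\sqrt T$ gives $e^{-cTt^2/(\eta^2 t_0^2)}$, i.e.\ the $t^2/t_0^2$ Gaussian regime. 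For $t$ large (when $t\gtrsim t_0^2$, equivalently when $\|\tilde w\|$ must be large for the deviation to occur) one optimizes over $R$: choosing $R$ growing with $t$ so that the event $\{\|\tilde w\|>R\}$ contributes $e^{-cTt/(\eta^2)}$ while the concentration term on the ball remains controlled, yielding the linear-in-$t$ regime $e^{-cTt/\eta^2}$. Combining, $P(\cdot>t)\le Ce^{-\frac{cT}{\|X\|^2\lambda_\varphi^2\lambda_\sigma^2}\min(t^2/t_0^2,\,t)}$; matching $\EE f$ with $\tr\Phi A/T$ requires only $|\EE f-\tr\Phi A/T|=0$, which is exact. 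Under Assumption~\ref{ass:growth}, $\|X\|,\lambda_\varphi,\lambda_\sigma$ are bounded and $p/T$ is bounded, so $t_0$ is bounded and $T\asymp n$, giving $P(\cdot>t)\le Ce^{-cn\min(t,t^2)}$. The main obstacle is the non-global Lipschitzness of the quadratic form: carefully executing the truncation so that both the probability of leaving the ball and the radius-dependent Lipschitz constant are balanced against $t$ is where the two-regime $\min(t^2/t_0^2,t)$ structure is born, and this is the delicate bookkeeping step.
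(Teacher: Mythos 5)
Your plan follows essentially the same route as the paper: transfer Gaussian concentration through the $\lambda_\sigma\lambda_\varphi\|X\|$-Lipschitz map $\tilde w\mapsto\sigma$, truncate to a high-probability event on which the quadratic form becomes Lipschitz with the radius-dependent constant, extend to a globally Lipschitz function, and let the choice of truncation radius as a function of $t$ produce the $\min(t^2/t_0^2,t)$ structure. The only cosmetic difference is that you condition on $\{\|\tilde w\|\le R\}$ whereas the paper conditions on $\{\|\sigma\|\le K\sqrt{T}\}$ (equivalent up to constants, since $\|\sigma\|\le\sqrt{T}|\sigma(0)|+\eta\|\tilde w\|$), and the paper picks $K=\max(4t_0,\sqrt{t})$ in one shot rather than optimizing $R$ separately in the two regimes.

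One point in your bookkeeping is not quite right as stated: the Gaussian concentration inequality applies to the \emph{extension} $\tilde f$ and therefore concentrates $\tilde f$ around $\EE[\tilde f]$, not around $\EE[f]=\frac1T\tr\Phi A$. Your remark that ``$|\EE f-\tr\Phi A/T|=0$, which is exact'' addresses the wrong mean. You still need to bound $|\EE[\tilde f]-\EE[f]|$, which is controlled by the second moment of $\|\sigma\|^2$ on the complement of the truncation event; the paper shows this bias is of order $\lambda_\varphi^2\lambda_\sigma^2\|X\|^2$ (before the $1/T$ normalization) and then absorbs it by restricting to $t$ above a tiny threshold and enlarging the constant $C$ for smaller $t$. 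This step is needed for the non-asymptotic statement and should be added, but it is standard and does not threaten the argument.
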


\textcolor{black}{Note that this lemma partially extends concentration of measure results involving quadratic forms, see e.g., \cite[Theorem~1.1]{RUD13}, to non-linear vectors.}

\textcolor{black}{With this result in place, the standard resolvent approaches of random matrix theory apply, providing our main theoretical finding as follows.}
\begin{theorem}[{Asymptotic equivalent for $\EE[Q]$}]
	\label{th:EQ}
	Let Assumptions~\ref{ass:W}--\ref{ass:growth} hold and define $\bar{Q}$ as
	\begin{align*}
		\bar{Q} &\equiv \left( \frac{n}T \frac{\Phi}{1+\delta} + \gamma I_T \right)^{-1}
	\end{align*}
	where $\delta$ is implicitly defined as the unique positive solution to $\delta=\frac1T\tr \Phi\bar{Q}$. Then, for all $\varepsilon>0$, there exists $c>0$ such that
	\begin{align*}
		\left\| \EE[Q] - \bar{Q} \right\| &\leq cn^{-\frac12+\varepsilon}.
	\end{align*}
\end{theorem}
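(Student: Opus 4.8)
\emph{Strategy.} The proof follows the classical resolvent (``leave-one-out'') argument of random matrix theory; the only non-standard ingredient is that the decoupling of quadratic forms usually provided by independence of the matrix entries is here furnished by Lemma~\ref{lem:concentration_quadform}. Write $\Sigma^\trans=[\sigma_1,\dots,\sigma_n]$ with $\sigma_i=\sigma(w_i^\trans X)^\trans\in\RR^T$, $w_i^\trans$ the $i$-th row of $W$, so that $Q^{-1}=\frac1T\sum_{i=1}^n\sigma_i\sigma_i^\trans+\gamma I_T$. For each $i$ let $Q_{-i}=(\frac1T\sum_{j\neq i}\sigma_j\sigma_j^\trans+\gamma I_T)^{-1}$, which is independent of $w_i$, and set $\beta_i=\frac1T\sigma_i^\trans Q_{-i}\sigma_i\ge 0$. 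The Sherman--Morrison identities $Q\sigma_i=\frac{1}{1+\beta_i}Q_{-i}\sigma_i$ and $Q=Q_{-i}-\frac1T\frac{(Q_{-i}\sigma_i)(Q_{-i}\sigma_i)^\trans}{1+\beta_i}$, together with the resolvent identity and the defining relation $\frac nT\frac{\Phi}{1+\delta}=\bar Q^{-1}-\gamma I_T$, yield the exact expansion
\begin{align*}
	\EE[Q]-\bar Q
	=\EE\Big[Q\Big(\tfrac nT\tfrac{\Phi}{1+\delta}-\tfrac1T\Sigma^\trans\Sigma\Big)\bar Q\Big]
	=\frac nT\,\frac{\EE[Q]\,\Phi\,\bar Q}{1+\delta}-\frac1T\sum_{i=1}^n\EE\Big[\frac{Q_{-i}\sigma_i\sigma_i^\trans\bar Q}{1+\beta_i}\Big].
\end{align*}

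\emph{The cancellation mechanism.} One now replaces $\beta_i$ by $\delta$ and $\sigma_i\sigma_i^\trans$ by $\Phi$ inside the sum. Conditionally on $Q_{-i}$, Lemma~\ref{lem:concentration_quadform} (applied with $A=\gamma Q_{-i}$, $\|A\|\le 1$) shows that $\beta_i$ concentrates around $\frac1T\tr\Phi Q_{-i}$ with fluctuations $O(n^{-1/2})$ and with all moments controlled, while $\frac1T\tr\Phi Q_{-i}$ is itself $\delta+O(n^{-1/2+\varepsilon})$ (a bound obtained together with the final estimate by the bootstrap mentioned below); moreover $\EE[Q_{-i}\sigma_i\sigma_i^\trans]=\EE[Q_{-i}]\Phi$ by independence. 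Carrying this out, the two main terms cancel up to a ``defect'' $\frac{1}{1+\delta}\frac nT\big(\frac1n\sum_i\EE[Q-Q_{-i}]\big)\Phi\bar Q$ and a decoupling error built from the products $(\delta-\beta_i)\,Q_{-i}\sigma_i\sigma_i^\trans\bar Q$. Isolating the deterministic part $-\Delta+O(n^{-1})$ of $\delta-\beta_i$, where $\Delta\equiv\frac1T\tr[\Phi(\EE[Q]-\bar Q)]$, one reaches a \emph{self-consistent} identity of the form $\EE[Q]-\bar Q=R+\Delta\,S$, with $S$ an explicit bounded matrix ($S=\pm\frac{1}{1+\delta}\bar Q(I_T-\gamma\bar Q)$) and $R$ collecting genuinely small terms. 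Applying $\frac1T\tr[\Phi\,\cdot\,]$ and using that $\frac1T\tr[\Phi S]=\pm f'(\delta)$, where $f(x)=\frac1T\tr\Phi(\frac nT\frac{\Phi}{1+x}+\gamma I_T)^{-1}$ satisfies $f'(\delta)=\frac1n\tr\big[(I_T-\gamma\bar Q)^2\big]\le\frac{\delta}{1+\delta}<1$ (which simultaneously yields uniqueness of $\delta$, $f$ being concave and increasing), one solves $\Delta(1\mp f'(\delta))=\frac1T\tr[\Phi R]$, so that $|\Delta|=O(\|R\|)$; feeding this back gives $\|\EE[Q]-\bar Q\|=O(\|R\|)$, and it remains to show $\|R\|=O(n^{-1/2+\varepsilon})$.

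\emph{The main obstacle.} The genuinely hard part is the uniform control of the error matrix $R$. This is harder than in the usual sample-covariance setting because, under the present assumptions, $\|\Phi\|$ — and hence $\|\sigma_i\|^2$, $\beta_i$, $\sigma_i^\trans Q_{-i}\Phi Q_{-i}\sigma_i$, and so on — need not be $O(1)$: only $\frac1T\tr\Phi$ is, together with ``sandwiched'' quantities such as $\Phi\bar Q=\frac{T(1+\delta)}n(I_T-\gamma\bar Q)$, the resolvents damping the large directions of $\Phi$. Consequently crude operator-norm and Cauchy--Schwarz bounds lose powers of $n$, and the estimates must be carried out keeping $\Phi$ adjacent to a resolvent and exploiting finer structure: the moment bounds $\EE\|\sigma_i\|^{2k}=O(n^k)$, the sub-exponential tails of $\beta_i$, the correlation between $\beta_i$ and $\|\sigma_i\|^2$ (a large $\|\sigma_i\|^2$ forces a large $\beta_i$, hence a small $\frac1{1+\beta_i}$), and high-probability operator inequalities of the type $\Phi\preceq c^{-1}Q_{-i}^{-1}$ — the latter again obtained from Lemma~\ref{lem:concentration_quadform} via an $\varepsilon$-net over the (necessarily few, since $\tr\Phi=O(n)$) large eigendirections of $\Phi$. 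In practice it is cleanest to first establish a crude bound $\|\EE[Q]-\bar Q\|=o(1)$ and then bootstrap it to the announced rate $n^{-1/2+\varepsilon}$.
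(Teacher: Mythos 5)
Your strategy is essentially the paper's: leave-one-out resolvents, the Sherman--Morrison identity, concentration of the quadratic forms $\beta_i=\frac1T\sigma_i^\trans Q_{-i}\sigma_i$ via Lemma~\ref{lem:concentration_quadform}, and the stability bound $\frac{n}{T(1+\delta)^2}\frac1T\tr\Phi^2\bar Q^2\leq\frac{\delta}{1+\delta}<1$ (your $f'(\delta)$). The one organizational difference is that you compare $\EE[Q]$ to $\bar Q$ in a single step and close a self-consistent equation for $\Delta=\frac1T\tr[\Phi(\EE[Q]-\bar Q)]$, whereas the paper first introduces the intermediate equivalent $\tilde Q=(\frac nT\frac{\Phi}{1+\alpha}+\gamma I_T)^{-1}$ with $\alpha=\frac1T\tr\Phi\EE[Q_-]$ — so that $\beta_i$ is compared to a genuine expectation and no fixed-point argument is needed at that stage — and only then compares $\alpha$ to $\delta$, where the stability factor enters; the two bookkeepings are equivalent, the paper's being slightly cleaner because it decouples the concentration step from the fixed-point step. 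For the step you leave open (the uniform control of $R$ despite $\|\Phi\|$ possibly being of order $n$), the paper's device is more elementary than the $\varepsilon$-net machinery you suggest: it symmetrizes the error term and applies $ab^\trans+ba^\trans\preceq aa^\trans+bb^\trans$ with the weighted choice $a=T^{1/4}Q\sigma_i(\beta_i-\alpha)$, $b=T^{-1/4}\tilde Q\sigma_i$, which reduces everything to $\|Q\Sigma^\trans/\sqrt T\|\leq\gamma^{-1/2}$, $\|\Phi\tilde Q\|\leq(1+\alpha)\frac Tn$, and the moment bound $\EE\|D_2\|^2=O(n^{2\varepsilon-1})$ for $D_2=\diag(\beta_i-\alpha)$, each resulting piece being $O(n^{\varepsilon-\frac12})$ in operator norm; no bootstrap from a crude $o(1)$ bound is required.
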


As a corollary of Theorem~\ref{th:EQ} along with a concentration argument on $\frac1T\tr Q$, we have the following result \textcolor{black}{on the spectral measure of $\frac1T\Sigma^\trans\Sigma$, which may be seen as a non-linear extension of \citep{SIL95} for which $\sigma(t)=t$.}
\begin{theorem}[{Limiting spectral measure of $\frac1T\Sigma^\trans\Sigma$}]
	\label{th:lsd}
	Let Assumptions~\ref{ass:W}--\ref{ass:growth} hold and, for $\lambda_1,\ldots,\lambda_T$ the eigenvalues of $\frac1T\Sigma^\trans\Sigma$, define $\mu_n = \frac1T\sum_{i=1}^T {\bm\delta}_{\lambda_i}$. 
	\textcolor{black}{Then, for every bounded continuous function $f$, with probability one
	\begin{align*}
		\int f d\mu_n - \int f d\bar{\mu}_n &\to 0.
	\end{align*}}%
%
%	*Then, with probability one,
%	\begin{align*}
%		\mu_n - \bar{\mu}_n &\to 0
%	\end{align*}
%
%	in the weak convergence of probability measures, *
where $\bar{\mu}_n$ is the measure defined through its Stieltjes transform $m_{\bar{\mu}_n}(z)\equiv \int (t-z)^{-1}d\bar{\mu}_n(t)$ given, for $z\in\{w\in\CC,~\Im[w]>0\}$, by
	\begin{align*}
		m_{\bar{\mu}_n}(z) &= \frac1T\tr \left( \frac{n}T \frac{\Phi}{1+\delta_z} - z I_T \right)^{-1}
	\end{align*}
	with $\delta_z$ the unique solution in $\{w\in\CC,~\Im[w]>0\}$ of
	\begin{align*}
		\delta_z &= \frac1T\tr \Phi \left( \frac{n}T \frac{\Phi}{1+\delta_z} - z I_T \right)^{-1}.
	\end{align*}
\end{theorem}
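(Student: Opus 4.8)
\emph{Proof strategy.} The plan is to follow the classical resolvent route of random matrix theory: use Theorem~\ref{th:EQ} at the real points $z=-\gamma$, pass from $\EE[Q]$ to $Q$ itself via concentration of measure, and then convert convergence of Stieltjes transforms into the asserted weak convergence through a normal-families argument. For $z\in\CC\setminus[0,\infty)$ the Stieltjes transform of $\mu_n$ is $m_{\mu_n}(z)=\frac1T\tr(\frac1T\Sigma^\trans\Sigma-zI_T)^{-1}$, which at $z=-\gamma$ is exactly $\frac1T\tr Q$; writing $\delta_{-\gamma}$ for the value of the $\delta_z$-fixed point at $z=-\gamma$, one checks it satisfies $\delta_{-\gamma}=\frac1T\tr\Phi(\frac nT\frac{\Phi}{1+\delta_{-\gamma}}+\gamma I_T)^{-1}$, hence coincides with the $\delta$ of Theorem~\ref{th:EQ}, so that $m_{\bar\mu_n}(-\gamma)=\frac1T\tr\bar Q$. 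One must also check that $\bar\mu_n$ is a bona fide probability measure supported on $[0,\infty)$: the fixed-point map for $\delta_z$ is an analytic self-map of a suitable subdomain of $\CC^+$, which yields existence, uniqueness and analyticity of $z\mapsto\delta_z$, and $m_{\bar\mu_n}$ then inherits the defining properties of a Stieltjes transform of a probability law on $\RR^+$ ($\CC^+\to\CC^+$, analytic off $[0,\infty)$, $iy\,m_{\bar\mu_n}(iy)\to-1$), as in the Silverstein-type analysis of \citep{SIL95}. The goal therefore reduces to showing $m_{\mu_n}(z)-m_{\bar\mu_n}(z)\to0$ almost surely for all $z\in\CC^+$, and then upgrading this to $\int f\,d\mu_n-\int f\,d\bar\mu_n\to0$.

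Fix $\gamma>0$. I would first establish concentration of $\frac1T\tr Q$: with $\Sigma=\sigma(\varphi(\tilde W)X)$ whose $i$-th row is $\sigma_i=\sigma(W_{i\cdot}X)^\trans$, a Sherman--Morrison identity gives $0\le\frac1T\tr Q_{-i}-\frac1T\tr Q\le\frac1{\gamma T}$, where $Q_{-i}$ is the resolvent built from $\Sigma$ with the rank-one contribution of row $i$ removed; as $Q_{-i}$ is independent of $\tilde W_{i\cdot}$, the map $\tilde W\mapsto\frac1T\tr Q$ has bounded differences of order $1/(\gamma T)$ in each of the $n$ independent rows of $\tilde W$, so $P(|\frac1T\tr Q-\EE[\frac1T\tr Q]|>t)\le Ce^{-c\gamma^2 n t^2}$ (equivalently, composing the $\lambda_\sigma\lambda_\varphi\|X\|$-Lipschitz map $\tilde W\mapsto\Sigma$ with a resolvent functional and invoking Gaussian concentration gives the same bound). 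By Borel--Cantelli, $\frac1T\tr Q-\EE[\frac1T\tr Q]\to0$ almost surely, and since $|\frac1T\tr\EE[Q]-\frac1T\tr\bar Q|\le\|\EE[Q]-\bar Q\|\to0$ by Theorem~\ref{th:EQ}, we obtain, for each fixed $\gamma>0$, that $m_{\mu_n}(-\gamma)-m_{\bar\mu_n}(-\gamma)\to0$ almost surely. Intersecting these probability-one events over a countable dense $\{\gamma_k\}\subset(0,\infty)$ leaves a probability-one event; on it, the holomorphic functions $g_n:=m_{\mu_n}-m_{\bar\mu_n}$ on $\CC\setminus[0,\infty)$ satisfy $|g_n(z)|\le 2/\dist(z,[0,\infty))$ (both measures being supported on $[0,\infty)$) and $g_n(-\gamma_k)\to0$ for all $k$. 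By Montel's theorem $\{g_n\}$ is a normal family, every locally uniform subsequential limit being holomorphic and vanishing on a set with an accumulation point in the domain, hence identically zero; therefore $g_n\to0$ locally uniformly on $\CC\setminus[0,\infty)$, in particular $m_{\mu_n}(z)-m_{\bar\mu_n}(z)\to0$ for every $z\in\CC^+$.

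Finally I would pass to test functions via tightness and a subsequence argument. Almost-sure tightness of $\{\mu_n\}$ follows from standard control of the bulk spectrum of $\frac1T\Sigma^\trans\Sigma$, available from the same concentration toolbox underlying Theorem~\ref{th:EQ}. Suppose then, on the probability-one event above, that $\int f\,d\mu_n-\int f\,d\bar\mu_n\not\to0$ along some subsequence, for some bounded continuous $f$; by tightness extract a further subsequence along which $\mu_n$ converges weakly to a probability measure $\mu$, so $m_{\mu_n}\to m_\mu$ pointwise on $\CC^+$, whence $m_{\bar\mu_n}\to m_\mu$ as well; since $m_\mu$ is the Stieltjes transform of a probability measure, the continuity theorem for Stieltjes transforms forces $\bar\mu_n$ to converge weakly to $\mu$ along that subsequence too, so both $\int f\,d\mu_n$ and $\int f\,d\bar\mu_n$ converge to $\int f\,d\mu$, a contradiction. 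Restricting $f$ to a countable convergence-determining family makes all of this hold on a single probability-one event, which gives the theorem.

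The substantive estimate --- the deterministic approximation of $\EE[Q]$ --- is supplied by Theorem~\ref{th:EQ}, and the concentration step is a short consequence of the Lipschitz/rank-one structure, so the main difficulty here is organizational rather than analytic: namely (i) verifying that $\bar\mu_n$ is genuinely the Stieltjes transform of a probability measure on $[0,\infty)$, i.e. the well-posedness in $\CC^+$ of the $\delta_z$-equation; and (ii) turning the family of ``for each fixed $\gamma$, almost surely'' statements into one almost-sure statement valid simultaneously for all bounded continuous $f$ --- which is precisely what the countable-dense-set, normal-families, tightness and Stieltjes-continuity arguments above accomplish.
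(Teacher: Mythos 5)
Your proposal is correct and follows essentially the same route as the paper: Theorem~\ref{th:EQ} plus concentration of $\frac1T\tr Q$ give, for each fixed $\gamma>0$, almost-sure convergence of the Stieltjes transforms at $z=-\gamma$, which is then extended to all of $\CC\setminus\RR^+$ by a normal-families argument and finally converted into convergence against bounded continuous test functions. The only cosmetic differences are that the paper invokes Vitali's convergence theorem where you use Montel plus the identity theorem, and closes by citing vague convergence of the signed measure $\mu_n-\bar\mu_n$ (Silverstein's Theorem~B.9) together with the fact that $\bar\mu_n$ is a probability measure, where you instead argue via tightness, subsequence extraction and the Stieltjes continuity theorem; both endgames are standard and interchangeable.
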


Note that $\bar \mu_n$ has a well-known form, already met in early random matrix works (e.g., \citep{SIL95}) on sample covariance matrix models. Notably, $\bar\mu_n$ is also the deterministic equivalent of the empirical spectral measure of $\frac1TP^\trans W^\trans WP$ for any deterministic matrix $P\in\RR^{p\times T}$ such that $P^\trans P=\Phi$. As such, to some extent, the results above provide a consistent asymptotic {\it linearization} of $\frac1T\Sigma^\trans\Sigma$.
From standard spiked model arguments (see e.g., \citep{BEN12}), the result $\|\EE[Q]-\bar Q\|\to 0$ further suggests that also the eigenvectors associated to isolated eigenvalues of $\frac1T\Sigma^\trans\Sigma$ (if any) behave similarly to those of $\frac1TP^\trans W^\trans WP$, a remark that has fundamental importance in the neural network performance understanding.  

However, as shall be shown in Section~\ref{sec:Phi}, and contrary to empirical covariance matrix models of the type $P^\trans W^\trans WP$, $\Phi$ explicitly depends on the distribution of $W_{ij}$ (that is, beyond its first two moments). Thus, the aforementioned {\it linearization} of $\frac1T\Sigma^\trans\Sigma$, and subsequently the deterministic equivalent for $\mu_n$, are not universal with respect to the distribution of zero-mean unit variance $W_{ij}$. This is in striking contrast to the many linear random matrix models studied to date which often exhibit such universal behaviors. This property too will have deep consequences in the performance of neural networks as shall be shown through Figure~\ref{fig:poly2} in Section~\ref{sec:discussion} for an example where inappropriate choices for the law of $W$ lead to network failure to fulfill the regression task.

\bigskip

For convenience in the following, letting $\delta$ and $\Phi$ be defined as in Theorem~\ref{th:EQ}, we shall denote
\begin{align}
	\label{eq:Psi}
	\Psi &= \frac{n}T \frac{\Phi}{1+\delta}.
\end{align}

\bigskip

Theorem~\ref{th:EQ} provides the central step in the evaluation of $E_{\rm train}$, for which not only $\EE[Q]$ but also $\EE[Q^2]$ needs be estimated. This last ingredient is provided in the following proposition.
\begin{proposition}[{Asymptotic equivalent for $\EE[QAQ]$}]
	\label{prop:QAQ}
	Let Assumptions~\ref{ass:W}--\ref{ass:growth} hold and $A\in\RR^{T\times T}$ be a symmetric non-negative definite matrix which is either $\Phi$ or a matrix with uniformly bounded operator norm (with respect to $T$). Then, for all $\varepsilon>0$, there exists $c>0$ such that, for all $n$,
	\begin{align*}
		\left\| \EE[QAQ] - \left( \bar{Q}A\bar{Q} + \frac{\frac1n\tr \left(\Psi\bar QA\bar Q\right)}{1-\frac1n\tr \Psi^2\bar Q^2} \bar Q\Psi \bar Q \right) \right\| &\leq cn^{-\frac12+\varepsilon}.
	\end{align*}
\end{proposition}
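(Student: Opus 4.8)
The plan is to follow the classical ``perturbation around the deterministic equivalent'' strategy: write $\EE[QAQ]$ in terms of $\bar Q A \bar Q$ plus a correction, and identify the correction self-consistently. First I would use the resolvent identity $Q - \bar Q = \bar Q(\bar Q^{-1} - Q^{-1})Q = \bar Q\bigl(\Psi - \frac1T\Sigma^\trans\Sigma\bigr)Q$, where $\Psi = \frac nT\frac{\Phi}{1+\delta}$. Applying this twice, or more efficiently introducing the resolvent $Q_{-i} = \bigl(\frac1T\Sigma^\trans\Sigma - \frac1T\sigma_i\sigma_i^\trans + \gamma I_T\bigr)^{-1}$ obtained by removing the contribution of the $i$-th row $\sigma_i = \sigma(X^\trans w_i)^\trans$ of $\Sigma$ (the same leave-one-out object already used to prove Theorem~\ref{th:EQ}), and the rank-one identity $Q\sigma_i = Q_{-i}\sigma_i/(1 + \frac1T\sigma_i^\trans Q_{-i}\sigma_i)$, I would expand
\begin{align*}
	\EE[QAQ] &= \EE[\bar Q A Q] + \EE\Bigl[\bar Q\bigl(\Psi - \tfrac1T\Sigma^\trans\Sigma\bigr)QAQ\Bigr].
\end{align*}
The first term is handled by Theorem~\ref{th:EQ} (giving $\bar Q A \EE[Q] \approx \bar Q A \bar Q$, with the $A = \Phi$ case needing the a priori bound $\|\Phi\| = O(n)$ only through the combination $\frac nT \Phi \bar Q$, which is bounded). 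In the second term I would replace $\frac1T\Sigma^\trans\Sigma = \frac1T\sum_i \sigma_i\sigma_i^\trans$, use the rank-one correction to take each $\sigma_i$-dependent factor out, and replace the scalar $\frac1T\sigma_i^\trans Q_{-i}\sigma_i$ by $\delta$ and the matrix $\frac1T\sigma_i\sigma_i^\trans$-averaged quantities by $\frac1T\Phi$ via Lemma~\ref{lem:concentration_quadform}, with all replacement errors controlled in operator norm at rate $n^{-1/2+\varepsilon}$ exactly as in the proof of Theorem~\ref{th:EQ}.

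The structurally new feature, compared with Theorem~\ref{th:EQ}, is that after these substitutions a term proportional to $\bar Q \Psi \bar Q$ times the \emph{scalar} $\frac1n\tr(\Psi \bar Q A \bar Q)$ reappears \emph{inside} a similar trace, producing a fixed-point equation for the unknown scalar $\alpha \equiv \frac1n\tr(\Psi\, \EE[QAQ])$. Concretely I expect to arrive at
\begin{align*}
	\EE[QAQ] &= \bar Q A \bar Q + \alpha\, \bar Q\Psi\bar Q + o_{\|\cdot\|}(n^{-1/2+\varepsilon}), \qquad \alpha = \tfrac1n\tr\bigl(\Psi\bar Q A\bar Q\bigr) + \alpha\cdot\tfrac1n\tr\bigl(\Psi^2\bar Q^2\bigr) + o(n^{-1/2+\varepsilon}),
\end{align*}
whence $\alpha = \frac{\frac1n\tr(\Psi\bar Q A\bar Q)}{1 - \frac1n\tr(\Psi^2\bar Q^2)} + o(n^{-1/2+\varepsilon})$, which is exactly the claimed expression. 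To close this I must check that $1 - \frac1n\tr(\Psi^2\bar Q^2)$ is bounded away from zero uniformly in $n$; this is the matrix analogue of the standard fact that $\frac1T\tr\bigl(\frac nT\frac{\Phi}{1+\delta}\bar Q\bigr)^2 < 1$, which follows because $\delta$ is the \emph{attractive} fixed point of $\delta \mapsto \frac1T\tr\Phi\bigl(\frac nT\frac{\Phi}{1+\delta}+\gamma I_T\bigr)^{-1}$ --- equivalently, differentiating the fixed-point relation and using $\gamma>0$ together with $\Phi\succeq 0$ gives $\frac1n\tr(\Psi^2\bar Q^2)/(1+\delta)^{-2}\cdot(\text{something}) < 1$; I would make this quantitative to get a uniform gap, using Assumption~\ref{ass:growth} and the a priori bounds $\|\bar Q\|\le \gamma^{-1}$, $\|X\|=O(1)$, $\|\Phi\|=O(n)$, $0<\liminf\delta\le\limsup\delta<\infty$ established en route to Theorem~\ref{th:EQ}.

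The main obstacle I anticipate is \emph{not} the algebra of the fixed-point identity but the propagation of errors: each of the several substitution steps (trace concentration, the rank-one $\delta$-replacement, replacing $Q_{-i}$ by $Q$, replacing $\EE[\,\cdot\,Q\,\cdot\,]$ by $\EE[\,\cdot\,]\bar Q$) must be shown to contribute only $O(n^{-1/2+\varepsilon})$ in operator norm, and because $QAQ$ involves a product of two resolvents the naive bounds degrade; one needs to exploit that the stray factors appear multiplied by the bounded matrices $\bar Q$, $\Psi\bar Q$, and that variance terms come with an extra $1/n$ from averaging over the $n$ independent rows $w_i$, together with the sub-Gaussian concentration of Lemma~\ref{lem:concentration_quadform}. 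The case $A = \Phi$ is slightly delicate because $\|\Phi\|$ is only $O(n)$, so one must everywhere keep $\Phi$ paired with a $\frac1T$ or with $\bar Q$; I would treat it by noting $\frac nT\frac{\Phi}{1+\delta} = \Psi$ has bounded norm and rewriting $\EE[Q\Phi Q] = \frac{T(1+\delta)}{n}\EE[Q\Psi Q]$, reducing it to the bounded-norm case with $A$ replaced by $\Psi$ up to the explicit scalar factor. Once all error terms are assembled this gives the stated $cn^{-1/2+\varepsilon}$ bound.
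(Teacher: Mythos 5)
Your proposal follows essentially the same route as the paper: the leave-one-out expansion with the rank-one identity, concentration of the quadratic forms $\frac1T\sigma_i^\trans Q_{-i}\sigma_i$ and $\frac1T\sigma_i^\trans \bar QAQ_{-i}\sigma_i$ around $\delta$ and $\frac1T\tr\Phi\bar QA\bar Q$, a self-consistent equation closed by the case $A=\Phi$ (equivalently $\Psi$), and the bound $\limsup_n\frac1n\tr\Psi^2\bar Q^2<1$ already obtained in the proof of Theorem~\ref{th:EQ}. The only differences are organizational — the paper closes the loop on the matrix $\EE[Q\Phi Q]$ rather than on your scalar $\alpha$, and controls the cross error terms in operator norm by symmetrizing $\EE[QAQ]$ and using $ab^\trans+ba^\trans\preceq aa^\trans+bb^\trans$, the device you would need to carry out the error propagation you correctly flag as the main obstacle.
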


As an immediate consequence of Proposition~\ref{prop:QAQ}, we have the following result on the training mean-square error of single-layer random neural networks.
\begin{theorem}[Asymptotic training mean-square error]
	\label{th:Etrain}
	Let Assumptions~\ref{ass:W}--\ref{ass:growth} hold and $\bar{Q}$, $\Psi$ be defined as in Theorem~\ref{th:EQ} and \eqref{eq:Psi}. Then, for all $\varepsilon>0$,
	\begin{align*}
		n^{\frac12-\varepsilon} \left(E_{\rm train} - \bar{E}_{\rm train}\right) &\to 0
	\end{align*}
	almost surely, where 
	\begin{align*}
		E_{\rm train} &= \frac1T \left\| Y^\trans - \Sigma^\trans \beta \right\|_F^2 = \frac{\gamma^2}T \tr Y^\trans Y Q^2 \\
		\bar{E}_{\rm train}  &= \frac{\gamma^2}T\tr Y^\trans Y \bar{Q} \left[ \frac{\frac1n\tr \Psi\bar{Q}^2}{1-\frac1n\tr (\Psi\bar{Q})^2} \Psi + I_T \right] \bar{Q}.
	\end{align*}
\end{theorem}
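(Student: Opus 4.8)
The plan is to identify $\EE[E_{\rm train}]$ with $\bar E_{\rm train}$ up to $o(1)$ by invoking Proposition~\ref{prop:QAQ} with $A=I_T$, and then to upgrade this expectation-level statement to an almost-sure one by a Gaussian concentration argument for the map $\tilde W\mapsto E_{\rm train}$. All the genuinely hard analysis is already packaged in Theorem~\ref{th:EQ} and Proposition~\ref{prop:QAQ}, so what remains is essentially an assembly plus one Lipschitz estimate.

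First I would note that $\EE[E_{\rm train}]=\frac{\gamma^2}{T}\tr Y^\trans Y\,\EE[Q^2]$ (since $Y$ is deterministic) and $Q^2=QI_TQ$, so Proposition~\ref{prop:QAQ} applied to the symmetric non-negative unit-norm matrix $A=I_T$ gives, for every $\varepsilon>0$,
\[
  \EE[Q^2]=\bar Q\Big(I_T+\frac{\frac1n\tr\Psi\bar Q^2}{1-\frac1n\tr(\Psi\bar Q)^2}\,\Psi\Big)\bar Q+E_n,\qquad\|E_n\|\le c\,n^{-\frac12+\varepsilon},
\]
where I use that $\bar Q$ and $\Psi$ commute (both being functions of $\Psi$), so $\tr\Psi^2\bar Q^2=\tr(\Psi\bar Q)^2$. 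Multiplying by $\frac{\gamma^2}{T}Y^\trans Y$ and taking traces produces exactly $\bar E_{\rm train}$ plus the error $\frac{\gamma^2}{T}\tr Y^\trans Y E_n$; since $Y^\trans Y$ is positive semi-definite one has $|\tr(MY^\trans Y)|\le\|M\|\tr(Y^\trans Y)$ for any $M$, and $\tr(Y^\trans Y)=\|Y\|_F^2\le dT(\max_{ij}|Y_{ij}|)^2=O(T)$ under Assumption~\ref{ass:growth}, so that error is $O(n^{-\frac12+\varepsilon})$. Hence $\EE[E_{\rm train}]=\bar E_{\rm train}+O(n^{-\frac12+\varepsilon})$ for all $\varepsilon>0$.

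Next I would show that $\tilde W\mapsto E_{\rm train}$ is globally $O(n^{-1/2})$-Lipschitz, so that the Gaussian concentration inequality for Lipschitz functions (see, e.g., \citep{LED05}) gives $P(|E_{\rm train}-\EE E_{\rm train}|>t)\le 2e^{-cnt^2}$. For Gaussian matrices $\tilde W,\tilde W'$ with associated $\Sigma,\Sigma'$ and resolvents $Q,Q'$, the resolvent identity yields
\[
  Q-Q'=\tfrac1T Q'\Sigma'^\trans(\Sigma'-\Sigma)Q+\tfrac1T Q'(\Sigma'-\Sigma)^\trans\Sigma Q .
\]
The crucial a priori bound is $\|\Sigma Q\|\le\sqrt{2T/\gamma}$, which follows from $\tfrac1T(\Sigma Q)^\trans(\Sigma Q)=Q(\tfrac1T\Sigma^\trans\Sigma)Q=Q-\gamma Q^2$ together with $\|Q-\gamma Q^2\|\le\|Q\|+\gamma\|Q\|^2\le 2/\gamma$ (and likewise $\|\Sigma'Q'\|\le\sqrt{2T/\gamma}$). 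Combining this with $\|Q\|,\|Q'\|\le1/\gamma$ and $\|\Sigma-\Sigma'\|\le\|\Sigma-\Sigma'\|_F\le\lambda_\sigma\lambda_\varphi\|X\|\,\|\tilde W-\tilde W'\|_F$ (entrywise Lipschitzness of $\sigma,\varphi$ and $\|AB\|_F\le\|A\|_F\|B\|$) gives $\|Q-Q'\|\le C_1 T^{-1/2}\|\tilde W-\tilde W'\|_F$. Then from $E_{\rm train}-E_{\rm train}'=\frac{\gamma^2}{T}\tr Y^\trans Y(Q^2-Q'^2)$, splitting $Q^2-Q'^2=Q(Q-Q')+(Q-Q')Q'$ and reusing the trace inequality with $\|Y\|_F^2=O(T)$, one gets $|E_{\rm train}-E_{\rm train}'|\le C_2 T^{-1/2}\|\tilde W-\tilde W'\|_F$ with $C_2=O(1)$; since $T\asymp n$ this is the claimed bound. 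Taking $t=\eta n^{-\frac12+\varepsilon}$ makes $2e^{-cnt^2}=2e^{-c\eta^2n^{2\varepsilon}}$ summable in $n$, so Borel--Cantelli gives $n^{\frac12-\varepsilon}(E_{\rm train}-\EE E_{\rm train})\to0$ a.s.; combining with the previous step applied with exponent $\varepsilon/2$ yields $n^{\frac12-\varepsilon}(E_{\rm train}-\bar E_{\rm train})\to0$ almost surely.

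The main obstacle is not really internal to this theorem: it lies in Proposition~\ref{prop:QAQ} (which furnishes $\EE[Q^2]$) and ultimately in Theorem~\ref{th:EQ} and Lemma~\ref{lem:concentration_quadform}. Within the present argument the one computation requiring care is the global Lipschitz estimate, because a direct bound on $\|Q-Q'\|$ carries a factor $\|\Sigma\|$ that grows like $\sqrt n$, which would otherwise force one to work on the high-probability event $\{\|\Sigma\|\le K\sqrt n\}$ and patch up its complement; the identity $Q(\tfrac1T\Sigma^\trans\Sigma)Q=Q-\gamma Q^2$ (equivalently $\|\Sigma Q\|=O(\sqrt T)$) is the clean device that removes this. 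One should also record, as part of applying Proposition~\ref{prop:QAQ}, that $1-\frac1n\tr(\Psi\bar Q)^2$ stays bounded away from $0$, which is what makes $\bar E_{\rm train}$ well defined.
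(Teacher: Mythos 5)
Your proposal is correct and follows essentially the same route as the paper: the deterministic part is Proposition~\ref{prop:QAQ} applied with $A=I_T$ (using $\tr\Psi^2\bar Q^2=\tr(\Psi\bar Q)^2$ and the bound $\frac1T\tr Y^\trans Y=O(1)$), and the almost-sure fluctuation control is exactly the paper's Corollary~\ref{cor:Etrain}, obtained there by composing Lemma~\ref{lem:Lip_Sigma} with the same key a priori bound $\|\Sigma Q\|=O(\sqrt T)$ that you derive from $Q\frac1T\Sigma^\trans\Sigma Q=Q-\gamma Q^2$. The only cosmetic difference is that you establish the Lipschitz property of $\tilde W\mapsto E_{\rm train}$ in one step rather than factoring through the map $\tilde W\mapsto\Sigma/\sqrt T$.
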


Since $\bar Q$ and $\Phi$ share the same orthogonal eigenvector basis, it appears that $E_{\rm train}$ depends on the alignment between the right singular vectors of $Y$ and the eigenvectors of $\Phi$, with weighting coefficients
\begin{align*}
	\left(\frac{\gamma}{\lambda_i+\gamma}\right)^2 \left( 1 + \lambda_i \frac{\frac1n \sum_{j=1}^T \lambda_j(\lambda_j+\gamma)^{-2}}{1-\frac1n \sum_{j=1}^T \lambda_j^2(\lambda_j+\gamma)^{-2}} \right),~1\leq i\leq T
\end{align*}
where we denoted $\lambda_i=\lambda_i(\Psi)$, $1\leq i\leq T$, the eigenvalues of $\Psi$ (which depend on $\gamma$ through $\lambda_i(\Psi)=\frac{n}{T(1+\delta)}\lambda_i(\Phi)$). If $\liminf_n n/T>1$, it is easily seen that $\delta\to 0$ as $\gamma\to 0$, in which case $E_{\rm train}\to 0$ almost surely. However, in the more interesting case in practice where $\limsup_n n/T<1$, $\delta\to\infty$ as $\gamma\to 0$ and $E_{\rm train}$ consequently does not have a simple limit (see Section~\ref{sec:limiting_cases} for more discussion on this aspect).

Theorem~\ref{th:Etrain} is also reminiscent of applied random matrix works on empirical covariance matrix models, such as \citep{BAI07,KAM09}, then further emphasizing the strong connection between the non-linear matrix $\sigma(WX)$ and its linear counterpart $W\Phi^\frac12$. 

\bigskip

As a side note, observe that, to obtain Theorem~\ref{th:Etrain}, we could have used the fact that $\tr Y^\trans Y Q^2=-\frac{\partial}{\partial \gamma}\tr Y^\trans Y Q$ which, along with some analyticity arguments (for instance when extending the definition of $Q=Q(\gamma)$ to $Q(z)$, $z\in\CC$), would have directly ensured that $\frac{\partial \bar Q}{\partial \gamma}$ is an asymptotic equivalent for $-\EE[Q^2]$, without the need for the explicit derivation of Proposition~\ref{prop:QAQ}. Nonetheless, as shall appear subsequently, Proposition~\ref{prop:QAQ} is also a proxy to the asymptotic analysis of $E_{\rm test}$. Besides, the technical proof of Proposition~\ref{prop:QAQ} quite interestingly showcases the strength of the concentration of measure tools under study here.

\subsection{Testing performance}

As previously mentioned, harnessing the asymptotic testing performance $E_{\rm test}$ seems, to the best of the authors' knowledge, out of current reach with the sole concentration of measure arguments used for the proof of the previous main results. Nonetheless, if not fully effective, these arguments allow for an intuitive derivation of a deterministic equivalent for $E_{\rm test}$, which is strongly supported by simulation results. We provide this result below under the form of a yet unproven claim, a heuristic derivation of which is provided at the end of Section~\ref{sec:proofs}.

\bigskip

To introduce this result, let $\hat{X}=[\hat x_1,\ldots,\hat x_{\hat T}]\in\RR^{p\times \hat T}$ be a set of input data with corresponding output $\hat Y=[\hat y_1,\ldots,\hat y_{\hat T}]\in\RR^{d\times \hat T}$. We also define $\hat \Sigma=\sigma(W\hat X)\in\RR^{p\times\hat T}$. We assume that $\hat X$ and $\hat Y$ satisfy the same growth rate conditions as $X$ and $Y$ in Assumption~\ref{ass:growth}. To introduce our claim, we need to extend the definition of $\Phi$ in \eqref{eq:Phi} and $\Psi$ in \eqref{eq:Psi} to the following notations: for all pair of matrices $(A,B)$ of appropriate dimensions,
\begin{align*}
	\Phi_{AB} &= \EE\left[ \sigma(w^\trans A)^\trans \sigma(w^\trans B) \right] \\
	\Psi_{AB} &= \frac{n}T\frac{\Phi_{AB}}{1+\delta}
\end{align*}
where $w\sim\mathcal N_\varphi(0,I_p)$. In particular, $\Phi=\Phi_{XX}$ and $\Psi=\Psi_{XX}$. 

With these notations in place, we are in position to state our claimed result.

\begin{conj}[{Deterministic equivalent for $E_{\rm test}$}]
	\label{claim:Etest}
	Let Assumptions~\ref{ass:W}--\ref{ass:sigma} hold and $\hat X,\hat Y$ satisfy the same conditions as $X,Y$ in Assumption~\ref{ass:growth}. Then, for all $\varepsilon>0$,
	\begin{align*}
		n^{\frac12-\varepsilon}\left(E_{\rm test} - \bar E_{\rm test}\right) &\to 0
	\end{align*}
	almost surely, where
	\begin{align*}
		E_{\rm test} &= \frac1{\hat T} \left\| \hat Y^\trans - \hat \Sigma^\trans\beta \right\|_F^2 \\
		\bar E_{\rm test} &= \frac1{\hat T} \left\| \hat Y^\trans - \Psi_{X\hat X}^\trans \bar Q Y^\trans \right\|_F^2\nonumber \\
		&+ \frac{\frac1n\tr Y^\trans Y\bar Q\Psi \bar Q}{1-\frac1n\tr (\Psi\bar Q)^2} \left[ \frac1{\hat T}\tr \Psi_{\hat X\hat X} - \frac1{\hat T}\tr (I_T+\gamma \bar Q)(\Psi_{X\hat X}\Psi_{\hat XX}\bar Q)\right].
	\end{align*}
\end{conj}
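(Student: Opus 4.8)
The plan is to reduce, by a concentration argument, to the computation of $\lim_n\EE[E_{\rm test}]$, and then to identify this limit term by term, re-using the leave-one-out machinery behind Theorem~\ref{th:EQ} and Proposition~\ref{prop:QAQ}. Substituting $\beta=\frac1T\Sigma QY^\trans$ (recall that $\Sigma=\sigma(WX)$ and $\hat\Sigma=\sigma(W\hat X)$ are built from the \emph{same} $W$) and expanding the Frobenius norm,
\begin{align*}
	E_{\rm test}=\frac1{\hat T}\tr\hat Y^\trans\hat Y-\frac2{\hat T}\tr\!\Big(Y^\trans\hat Y\,\tfrac1T\hat\Sigma^\trans\Sigma Q\Big)+\frac1{\hat T}\tr\!\big(Y^\trans Y\,U^\trans U\big),\qquad U\equiv\tfrac1T\hat\Sigma^\trans\Sigma Q.
\end{align*}
Since $\sigma,\varphi$ are Lipschitz, $\|Q\|\le\gamma^{-1}$, and $\|X\|,\|\hat X\|$ and $\|Y\|_F/\sqrt n$ are bounded, on the high-probability event where $\|\Sigma\|$ and $\|\hat\Sigma\|$ are $O(\sqrt n)$ the map $\tilde W\mapsto E_{\rm test}$ is $O(n^{-1/2})$-Lipschitz; the Gaussian concentration inequality together with Borel--Cantelli then yields $n^{1/2-\varepsilon}(E_{\rm test}-\EE[E_{\rm test}])\to0$ almost surely, exactly as for $E_{\rm train}$. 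It therefore remains to prove $\EE[E_{\rm test}]\to\bar E_{\rm test}$.

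For the cross term I would decompose over the i.i.d.\ rows $w_1,\dots,w_n$ of $W$: $\tfrac1T\hat\Sigma^\trans\Sigma Q=\tfrac1T\sum_i\hat\sigma_i\sigma_i^\trans Q$ with $\sigma_i=\sigma(X^\trans w_i)$ and $\hat\sigma_i=\sigma(\hat X^\trans w_i)$. The Sherman--Morrison identity applied to $Q$ relative to $w_i$ gives $\sigma_i^\trans Q=\sigma_i^\trans Q_{-i}/(1+\tfrac1T\sigma_i^\trans Q_{-i}\sigma_i)$ with $Q_{-i}$ independent of $w_i$; Lemma~\ref{lem:concentration_quadform} then replaces $\tfrac1T\sigma_i^\trans Q_{-i}\sigma_i$ by $\delta$ and Theorem~\ref{th:EQ} replaces $\EE[Q_{-i}]$ by $\bar Q$, so that, using $\EE[\hat\sigma_i\sigma_i^\trans]=\Phi_{\hat X X}$ and independence,
\begin{align*}
	\tfrac1T\EE[\hat\Sigma^\trans\Sigma Q]\ \longrightarrow\ \frac nT\frac{\Phi_{\hat X X}}{1+\delta}\bar Q=\Psi_{\hat X X}\bar Q=\Psi_{X\hat X}^\trans\bar Q.
\end{align*}
Hence the first two terms of $E_{\rm test}$ tend to $\tfrac1{\hat T}\|\hat Y^\trans-\Psi_{X\hat X}^\trans\bar QY^\trans\|_F^2-\tfrac1{\hat T}\|\Psi_{X\hat X}^\trans\bar QY^\trans\|_F^2$. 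The last term $\tfrac1{\hat T}\tr(Y^\trans YU^\trans U)$ I would split as $\tfrac1{\hat T}\tr(Y^\trans Y\,\EE[U]^\trans\EE[U])+\tfrac1{\hat T}\EE\tr\big(Y^\trans Y\,(U-\EE U)^\trans(U-\EE U)\big)$: the first piece, since $\EE[U]\to\Psi_{X\hat X}^\trans\bar Q$, returns exactly the $\tfrac1{\hat T}\|\Psi_{X\hat X}^\trans\bar QY^\trans\|_F^2$ just subtracted; the second, variance-type piece I would evaluate by expanding $U-\EE U$ through leave-one-out (and leave-two-out for the off-diagonal row pairs, using $\EE[(\hat\sigma_i^\trans\hat\sigma_j)\sigma_j^\trans M\sigma_i]=\tr(M\Phi_{X\hat X}\Phi_{\hat X X})$ for $M$ independent of $w_i,w_j$), this time retaining the Sherman--Morrison rank-one corrections, which produce a self-consistent relation whose geometric-series resummation --- the very mechanism producing the denominator $1-\tfrac1n\tr(\Psi\bar Q)^2$ in Proposition~\ref{prop:QAQ} --- gives
\begin{align*}
	\tfrac1{\hat T}\EE\tr\big(Y^\trans Y(U-\EE U)^\trans(U-\EE U)\big)\ \longrightarrow\ \frac{\tfrac1n\tr Y^\trans Y\bar Q\Psi\bar Q}{1-\tfrac1n\tr(\Psi\bar Q)^2}\Big[\tfrac1{\hat T}\tr\Psi_{\hat X\hat X}-\tfrac1{\hat T}\tr(I_T+\gamma\bar Q)\Psi_{X\hat X}\Psi_{\hat X X}\bar Q\Big].
\end{align*}
Summing the three contributions yields $\bar E_{\rm test}$.

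The main obstacle --- and the reason this is stated as a claim rather than a theorem --- is the rigorous justification of the decoupling steps for the quadratic term. Because $\Sigma$ and $\hat\Sigma$ share $W$, the matrix effectively sandwiched by the two resolvents in $\tfrac1{\hat T}\tr(Y^\trans YU^\trans U)$ is the \emph{random}, $W$-dependent matrix $\tfrac1T\hat\Sigma\hat\Sigma^\trans$, so Proposition~\ref{prop:QAQ} cannot be invoked directly; one must instead control the correlation between $\|\hat\sigma_i\|^2$ (resp.\ $\hat\sigma_i^\trans\hat\sigma_j$) and the quadratic forms $\sigma_i^\trans(\cdot)\sigma_i$, uniformly over the $\Theta(n^2)$ row pairs, while carrying the possibly $\Theta(n)$-norm matrix $Y^\trans Y$ through the resolvents. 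The concentration-of-measure arguments that close the analyses of $\EE[Q]$, $\EE[QAQ]$ and $E_{\rm train}$ do not obviously suffice here; a complete proof would presumably require the Gaussian-interpolation / integration-by-parts tools alluded to in the introduction, and is left to future work. Pending that, the three steps above constitute the heuristic derivation that supports the claim, whose accuracy is confirmed by the simulations.
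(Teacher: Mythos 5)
Your derivation matches the paper's own heuristic for this conjecture: the same three-term expansion of $E_{\rm test}$, an admitted concentration step resting on the (unproven) operator-norm control of $\hat\Sigma$, the leave-one-out identification of $\frac1T\EE[\hat\Sigma^\trans\Sigma Q]$ with $\Psi_{\hat XX}\bar Q$, and a leave-one/two-out resummation of the quadratic term producing the $1-\frac1n\tr(\Psi\bar Q)^2$ denominator --- the paper merely organizes that last term as a diagonal/off-diagonal split over row pairs rather than your mean-plus-fluctuation split of $U$, which is equivalent. You also correctly identify the same obstruction the authors do (the $W$-dependence shared by $\Sigma$ and $\hat\Sigma$, and the loose Frobenius-norm control of $\hat\Sigma$ from Remark~\ref{rem:Sigma}) as the reason the statement remains a conjecture.
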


While not immediate at first sight, one can confirm (using notably the relation $\Psi\bar Q+\gamma\bar Q=I_T$) that, for $(\hat X,\hat Y)=(X,Y)$, $\bar E_{\rm train}=\bar E_{\rm test}$, as expected.

In order to evaluate practically the results of Theorem~\ref{th:Etrain} and Conjecture~\ref{claim:Etest}, it is a first step to be capable of estimating the values of $\Phi_{AB}$ for various $\sigma(\cdot)$ activation functions of practical interest. Such results, which call for completely different mathematical tools (mostly based on integration tricks), are provided in the subsequent section.

\subsection{Evaluation of $\Phi_{AB}$}
\label{sec:Phi}

The evaluation of $\Phi_{AB}=\EE[\sigma(w^\trans A)^\trans\sigma(w^\trans B)]$ for arbitrary matrices $A,B$ naturally boils down to the evaluation of its individual entries and thus to the calculus, for arbitrary vectors $a,b\in\RR^p$, of
\begin{align}
	\label{eq:formula_Phi_ab}
	\textcolor{black}{\Phi_{ab}\equiv} \EE[\sigma( w^\trans a)\sigma(w^\trans b)] &= (2\pi)^{-\frac{p}2} \int \sigma(\varphi(\tilde w)^\trans a)\sigma( \varphi(\tilde w)^\trans b) e^{-\frac12 \textcolor{black}{\|\tilde w\|^2}}d\tilde w.
\end{align}
The evaluation of \eqref{eq:formula_Phi_ab} can be obtained through various integration tricks for a wide family of mappings $\varphi(\cdot)$ and activation functions $\sigma(\cdot)$. The most popular activation functions in neural networks are sigmoid functions, such as $\sigma(t)={\rm erf}(t)\equiv\frac2{\sqrt{\pi}}\int_0^t e^{-u^2}du$, as well as the so-called rectified linear unit (ReLU) defined by $\sigma(t)=\max(t,0)$ which has been recently popularized as a result of its robust behavior in deep neural networks. In physical artificial neural networks implemented using light projections, $\sigma(t)=|t|$ is the preferred choice. Note that all aforementioned functions are Lipschitz continuous and therefore in accordance with Assumption~\ref{ass:sigma}.

Despite their not abiding by the prescription of Assumptions~\ref{ass:W} and \ref{ass:sigma}, we believe that the results of this article could be extended to more general settings, as discussed in Section~\ref{sec:discussion}. In particular, \textcolor{black}{since the key ingredient} in the proof of all our results is that the vector $\sigma(w^\trans X)$ follows a concentration of measure phenomenon, induced by the Gaussianity of $\tilde w$ (if $w=\varphi(\tilde w)$), the Lipschitz character of $\sigma$ and the norm boundedness of $X$, it is likely, although not necessarily simple to prove, that $\sigma(w^\trans X)$ may still concentrate under relaxed assumptions. This is likely the case for more generic vectors $w$ than $\mathcal N_\varphi(0,I_p)$ as well as for a larger class of activation functions, such as polynomial or piece-wise Lipschitz continuous functions. 

In anticipation of these likely generalizations, we provide in Table~\ref{tab:Phi} the values of $\Phi_{ab}$ for $w\sim\mathcal N(0,I_p)$ (i.e., for $\varphi(t)=t$) and for a set of functions $\sigma(\cdot)$ not necessarily satisfying Assumption~\ref{ass:sigma}. Denoting $\Phi\equiv \Phi(\sigma(t))$, it is interesting to remark that, since $\arccos(x)=-\arcsin(x)+\frac\pi2$, $\Phi(\max(t,0))=\Phi(\frac12t)+\Phi(\frac12|t|)$. Also, $[\Phi(\cos(t))+\Phi(\sin(t))]_{a,b}=\exp(-\frac12\|a-b\|^2)$, a result reminiscent of \citep{RAH07}.\footnote{It is in particular not difficult to prove, based on our framework, that, as $n/T\to\infty$, a random neural network composed of $n/2$ neurons with activation function $\sigma(t)=\cos(t)$ and $n/2$ neurons with activation function $\sigma(t)=\sin(t)$ implements a Gaussian difference kernel.} Finally, note that $\Phi( {\rm erf}(\kappa t))\to \Phi( {\rm sign}(t) )$ as $\kappa\to\infty$, inducing that the extension by continuity of ${\rm erf}(\kappa t)$ to ${\rm sign}(t)$ propagates to their associated kernels.

\begin{table}
	\renewcommand{\arraystretch}{1.2}
\centering
\begin{tabular}{cc}
	\toprule
	$\sigma(t)$ & $\Phi_{ab}$ \\ \hline
	%$t^2$ & any & $\frac{m_2^2}{n^2}\left(2\sigma(a_i^\trans b_j)+\sigma(a_i)^\trans 1_p1_p^\trans \sigma(b_j)\right)+\frac{m_4-3m_2^2}{n^2}\sigma(a_i)^\trans \sigma(b_j)$ \\
	%$At^2+Bt+C$ & any & $A^2\left[\frac{m_2^2}{n^2}\left(2(a^\trans b)^2+\|a\|^2\|b\|^2\right)+\frac{m_4-3m_2^2}{n^2}(a^2)^\trans (b^2) \right]+ B^2 \frac{m_2}{n}a^\trans b$ \\ & & $ + AB \frac{m_3}{n^{3/2}} \left[(a^2)^\trans b+a^\trans(b^2) \right] + AC\frac{m_2}n \left[ \|a\|^2+\|b\|^2 \right] + C^2 ~ ~ ~ ~ ~ ~ ~ ~ ~ ~ ~ ~ ~ ~ ~ $ \\
	\noalign{\smallskip}
	$t$ & $a^\trans b$ \\
	$\max(t,0)$ & $\frac1{2\pi}\|a\| \|b\| \left( \angle(a,b) \acos(-\angle(a,b)) + \sqrt{1-\angle(a,b)^2} \right)$\\
	$|t|$ & $\frac2{\pi}\|a\| \|b\| \left( \angle(a,b) \asin(\angle(a,b)) + \sqrt{1-\angle(a,b)^2} \right)$\\
	${\rm erf}(t)$ & $\frac2{\pi}\asin \left( \frac{2a^\trans b}{\sqrt{ (1+2\|a\|^2)(1+2\|b\|^2) }} \right)$ \\
	$1_{\{t>0\}}$ & $\frac12-\frac1{2\pi} \acos(\angle(a,b))$ \\
	%${\rm sign}(t)$ & $1-\frac2\pi \acos(\angle(a,b))$ \\
	${\rm sign}(t)$ & $\frac2\pi \asin(\angle(a,b))$ \\
	$\cos(t)$ & $\exp(-\frac12(\|a\|^2+\|b\|^2))\cosh(a^\trans b)$ \\
	$\sin(t)$ & $\exp(-\frac12(\|a\|^2+\|b\|^2))\sinh(a^\trans b)$. \\
	\bottomrule
\end{tabular}

\caption{Values of $\Phi_{ab}$ for $w\sim \mathcal N(0,I_p)$, $\angle(a,b)\equiv \frac{a^\trans b}{\|a\|\|b\|}$.}
\label{tab:Phi}
\end{table}

\bigskip

In addition to these results for $w\sim \mathcal N(0,I_p)$, we also evaluated $\Phi_{ab}=\EE[\sigma(w^\trans a)\sigma(w^\trans b)]$ for $\sigma(t)=\zeta_2 t^2+\zeta_1 t+\zeta_0$ and $w\in\RR^p$ a vector of independent and identically distributed entries of zero mean and moments of order $k$ equal to $m_k$ (so $m_1=0$); $w$ is not restricted here to satisfy $w\sim \mathcal N_\varphi(0,I_p)$. In this case, we find
\begin{align}
	\label{eq:Phi_poly2}
	\Phi_{ab} &= \zeta_2^2\left[m_2^2\left(2(a^\trans b)^2+\|a\|^2\|b\|^2\right)+(m_4-3m_2^2)(a^2)^\trans (b^2) \right]+ \zeta_1^2 m_2 a^\trans b \nonumber \\
	&+ \zeta_2\zeta_1 m_3 \left[(a^2)^\trans b+a^\trans(b^2) \right] + \zeta_2\zeta_0 m_2 \left[ \|a\|^2+\|b\|^2 \right] + \zeta_0^2
\end{align}
where we defined $(a^2)\equiv [a_1^2,\ldots,a_p^2]^\trans$. 

It is already interesting to remark that, while classical random matrix models exhibit a well-known universality property --- in the sense that their limiting spectral distribution is independent of the moments (higher than two) of the entries of the involved random matrix, here $W$ ---, for $\sigma(\cdot)$ a polynomial of order two, $\Phi$ and thus $\mu_n$ strongly depend on $\EE[W_{ij}^k]$ for $k=3,4$. We shall see in Section~\ref{sec:discussion} that this remark has troubling consequences. We will notably infer (and confirm via simulations) that the studied neural network may provably fail to fulfill a specific task if the $W_{ij}$ are Bernoulli with zero mean and unit variance but succeed with possibly high performance if the $W_{ij}$ are standard Gaussian (which is explained by the disappearance or not of the term $(a^\trans b)^2$ and $(a^2)^\trans (b^2)$ in \eqref{eq:Phi_poly2} if $m_4=m_2^2$).

\section{Practical Outcomes}
\label{sec:discussion}

We discuss in this section the outcomes of our main results in terms of neural network application. The technical discussions on Theorem~\ref{th:EQ} and Proposition~\ref{prop:QAQ} will be made in the course of their respective proofs in Section~\ref{sec:proofs}.

\subsection{Simulation Results}

We first provide in this section a simulation corroborating the findings of Theorem~\ref{th:Etrain} and suggesting the validity of Conjecture~\ref{claim:Etest}. To this end, we consider the task of classifying the popular MNIST image database \citep{MNIST}, composed of grayscale handwritten digits of size $28\times 28$, with a neural network composed of $n=512$ units and standard Gaussian $W$. We represent here each image as a $p=784$-size vector; $1\,024$ images of sevens and $1\,024$ images of nines were extracted from the database and were evenly split in $512$ training and test images, respectively. The database images were jointly centered and scaled so to fall close to the setting of Assumption~\ref{ass:growth} on $X$ and $\hat X$ (an admissible preprocessing intervention). The columns of the output values $Y$ and $\hat Y$ were taken as unidimensional ($d=1$) with $Y_{1j},\hat Y_{1j}\in\{-1,1\}$ depending on the image class. Figure~\ref{fig:perf} displays the simulated (averaged over $100$ realizations of $W$) versus theoretical values of $E_{\rm train}$ and $E_{\rm test}$ for three choices of Lipschitz continuous functions $\sigma(\cdot)$, as a function of $\gamma$. 

Note that a perfect match between theory and practice is observed, for both $E_{\rm train}$ and $E_{\rm test}$, which is a strong indicator of both the validity of Conjecture~\ref{claim:Etest} and the adequacy of Assumption~\ref{ass:growth} to the MNIST dataset.

\begin{figure}[h!]
  \centering
  \begin{tikzpicture}[font=\footnotesize]
    \renewcommand{\axisdefaulttryminticks}{4} 
    \tikzstyle{every major grid}+=[style=densely dashed] 
    \tikzstyle{every axis y label}+=[yshift=-10pt] 
    \tikzstyle{every axis x label}+=[yshift=5pt]
    \tikzstyle{every axis legend}+=[cells={anchor=west},fill=white,
        at={(0.02,0.98)}, anchor=north west, font=\scriptsize ]
    \begin{loglogaxis}[
      %ybar,
      width=.8\linewidth,
      xmin=1e-4,
      ymin=5e-2,
      xmax=1e2,
      ymax=1,
      bar width=1.5pt,
      grid=major,
      ymajorgrids=false,
      scaled ticks=true,
      %scale ticks above={4},
      xlabel={$\gamma$},
      ylabel={MSE}
      ]
      %%% sigma(t)=ReLU
      \addplot[black,smooth,line width=0.5pt] coordinates{
	      (100,100)
      };
      \addplot[black,densely dashed,smooth,line width=0.5pt] coordinates{
	      (100,100)
      };
      \addplot[black,only marks,mark=o,line width=0.5pt] coordinates{
	      (100,100)
      };
      \addplot[black,only marks,mark=x,line width=0.5pt] coordinates{
	      (100,100)
      };
      %%% sigma(t)=ReLU
      \addplot[blue,smooth,line width=0.5pt] coordinates{
	      (0.000100,0.060032)(0.000178,0.060037)(0.000316,0.060051)(0.000562,0.060092)(0.001000,0.060210)(0.001778,0.060520)(0.003162,0.061264)(0.005623,0.062850)(0.010000,0.065838)(0.017783,0.070852)(0.031623,0.078496)(0.056234,0.089318)(0.100000,0.103786)(0.177828,0.122225)(0.316228,0.144733)(0.562341,0.171222)(1.000000,0.201947)(1.778279,0.238929)(3.162278,0.287718)(5.623413,0.357078)(10.000000,0.453269)(17.782794,0.570838)(31.622777,0.691251)(56.234133,0.794663)(100.000000,0.871479)	      
      };
      \addplot[blue,densely dashed,smooth,line width=0.5pt] coordinates{
	      (0.000100,0.369989)(0.000178,0.368107)(0.000316,0.364890)(0.000562,0.359540)(0.001000,0.351041)(0.001778,0.338439)(0.003162,0.321477)(0.005623,0.301251)(0.010000,0.280183)(0.017783,0.261069)(0.031623,0.246054)(0.056234,0.236308)(0.100000,0.232295)(0.177828,0.234146)(0.316228,0.241874)(0.562341,0.255459)(1.000000,0.275160)(1.778279,0.302571)(3.162278,0.342330)(5.623413,0.402283)(10.000000,0.488500)(17.782794,0.596315)(31.622777,0.708353)(56.234133,0.805456)(100.000000,0.877995)
      };
      \addplot[blue,only marks,mark=o,line width=0.5pt] coordinates{
	      (0.000100,0.060169)(0.000178,0.059654)(0.000316,0.059800)(0.000562,0.060305)(0.001000,0.060212)(0.001778,0.060921)(0.003162,0.061980)(0.005623,0.063356)(0.010000,0.065644)(0.017783,0.070753)(0.031623,0.079214)(0.056234,0.090096)(0.100000,0.105253)(0.177828,0.122725)(0.316228,0.145592)(0.562341,0.171732)(1.000000,0.202450)(1.778279,0.240647)(3.162278,0.288083)(5.623413,0.357426)(10.000000,0.454285)(17.782794,0.570757)(31.622777,0.691082)(56.234133,0.793788)(100.000000,0.872311)
      };
      \addplot[blue,only marks,mark=x,line width=0.5pt] coordinates{
	      (0.000100,0.371541)(0.000178,0.371174)(0.000316,0.364297)(0.000562,0.359050)(0.001000,0.349780)(0.001778,0.338103)(0.003162,0.321833)(0.005623,0.299803)(0.010000,0.278571)(0.017783,0.259929)(0.031623,0.245208)(0.056234,0.238587)(0.100000,0.232204)(0.177828,0.235324)(0.316228,0.242129)(0.562341,0.255795)(1.000000,0.275866)(1.778279,0.304368)(3.162278,0.342947)(5.623413,0.401261)(10.000000,0.489728)(17.782794,0.596495)(31.622777,0.708384)(56.234133,0.805123)(100.000000,0.879209)
      };
      %%% sigma(t)=erf(t)
      \addplot[green!60!black,smooth,line width=0.5pt] coordinates{
	      (0.000100,0.074310)(0.000178,0.074319)(0.000316,0.074346)(0.000562,0.074426)(0.001000,0.074645)(0.001778,0.075201)(0.003162,0.076470)(0.005623,0.079030)(0.010000,0.083576)(0.017783,0.090766)(0.031623,0.101061)(0.056234,0.114591)(0.100000,0.131052)(0.177828,0.149735)(0.316228,0.169772)(0.562341,0.190553)(1.000000,0.212193)(1.778279,0.236175)(3.162278,0.266308)(5.623413,0.309628)(10.000000,0.375483)(17.782794,0.469683)(31.622777,0.585529)(56.234133,0.703494)(100.000000,0.803855)
      };
      \addplot[green!60!black,densely dashed,smooth,line width=0.5pt] coordinates{
	      (0.000100,0.440652)(0.000178,0.437896)(0.000316,0.433230)(0.000562,0.425596)(0.001000,0.413766)(0.001778,0.396844)(0.003162,0.375087)(0.005623,0.350429)(0.010000,0.325926)(0.017783,0.304414)(0.031623,0.287559)(0.056234,0.275806)(0.100000,0.268843)(0.177828,0.266139)(0.316228,0.267331)(0.562341,0.272381)(1.000000,0.281629)(1.778279,0.296124)(3.162278,0.318617)(5.623413,0.354854)(10.000000,0.413192)(17.782794,0.499231)(31.622777,0.606965)(56.234133,0.717905)(100.000000,0.812953)
      };
      \addplot[green!60!black,only marks,mark=o,line width=0.5pt] coordinates{
	      (0.000100,0.074604)(0.000178,0.074787)(0.000316,0.074943)(0.000562,0.074212)(0.001000,0.074328)(0.001778,0.074718)(0.003162,0.076553)(0.005623,0.079204)(0.010000,0.083747)(0.017783,0.091309)(0.031623,0.101327)(0.056234,0.115821)(0.100000,0.131169)(0.177828,0.150731)(0.316228,0.169360)(0.562341,0.190909)(1.000000,0.212579)(1.778279,0.236194)(3.162278,0.266393)(5.623413,0.309525)(10.000000,0.374092)(17.782794,0.468339)(31.622777,0.586155)(56.234133,0.703854)(100.000000,0.802731)
      };
      \addplot[green!60!black,only marks,mark=x,line width=0.5pt] coordinates{
	      (0.000100,0.434155)(0.000178,0.441159)(0.000316,0.432215)(0.000562,0.426099)(0.001000,0.411614)(0.001778,0.395781)(0.003162,0.376977)(0.005623,0.349195)(0.010000,0.325091)(0.017783,0.304871)(0.031623,0.286733)(0.056234,0.277579)(0.100000,0.268198)(0.177828,0.265720)(0.316228,0.266617)(0.562341,0.272183)(1.000000,0.281899)(1.778279,0.296846)(3.162278,0.318317)(5.623413,0.353809)(10.000000,0.411592)(17.782794,0.497774)(31.622777,0.606980)(56.234133,0.717485)(100.000000,0.811980)
      };
      %%% sigma(t)=t
      \addplot[red,smooth,line width=0.5pt] coordinates{
	      (0.000100,0.091075)(0.000178,0.093112)(0.000316,0.095535)(0.000562,0.098376)(0.001000,0.101672)(0.001778,0.105475)(0.003162,0.109864)(0.005623,0.114953)(0.010000,0.120889)(0.017783,0.127834)(0.031623,0.135932)(0.056234,0.145272)(0.100000,0.155848)(0.177828,0.167557)(0.316228,0.180262)(0.562341,0.193894)(1.000000,0.208523)(1.778279,0.224475)(3.162278,0.242766)(5.623413,0.266077)(10.000000,0.300001)(17.782794,0.353233)(31.622777,0.433903)(56.234133,0.541025)(100.000000,0.659354)
      };
      \addplot[red,smooth,densely dashed,line width=0.5pt] coordinates{
	      (0.000100,0.589942)(0.000178,0.554335)(0.000316,0.522265)(0.000562,0.492952)(0.001000,0.465919)(0.001778,0.440920)(0.003162,0.417855)(0.005623,0.396675)(0.010000,0.377307)(0.017783,0.359647)(0.031623,0.343607)(0.056234,0.329199)(0.100000,0.316591)(0.177828,0.306125)(0.316228,0.298314)(0.562341,0.293788)(1.000000,0.293182)(1.778279,0.297025)(3.162278,0.305964)(5.623413,0.321683)(10.000000,0.348461)(17.782794,0.393936)(31.622777,0.465999)(56.234133,0.564414)(100.000000,0.675108)
      };
      \addplot[red,only marks,mark=o,line width=0.5pt] coordinates{
	      (0.000100,0.091099)(0.000178,0.093085)(0.000316,0.095631)(0.000562,0.098558)(0.001000,0.101701)(0.001778,0.105571)(0.003162,0.110049)(0.005623,0.115043)(0.010000,0.121013)(0.017783,0.127955)(0.031623,0.136139)(0.056234,0.145400)(0.100000,0.155874)(0.177828,0.167709)(0.316228,0.180108)(0.562341,0.194161)(1.000000,0.208499)(1.778279,0.224535)(3.162278,0.243232)(5.623413,0.266925)(10.000000,0.300952)(17.782794,0.355041)(31.622777,0.434164)(56.234133,0.539804)(100.000000,0.659702)
      };
      \addplot[red,only marks,mark=x,line width=0.5pt] coordinates{
	      (0.000100,0.592670)(0.000178,0.553248)(0.000316,0.522034)(0.000562,0.489985)(0.001000,0.463881)(0.001778,0.438415)(0.003162,0.415474)(0.005623,0.396566)(0.010000,0.377298)(0.017783,0.359021)(0.031623,0.342439)(0.056234,0.328641)(0.100000,0.317819)(0.177828,0.305806)(0.316228,0.298247)(0.562341,0.294082)(1.000000,0.293358)(1.778279,0.297068)(3.162278,0.305307)(5.623413,0.322055)(10.000000,0.349827)(17.782794,0.394786)(31.622777,0.466276)(56.234133,0.563912)(100.000000,0.675019)
      };
      %%% sigma(t)=|t|
      \addplot[black,smooth,line width=0.5pt] coordinates{
	      (0.000100,0.118994)(0.000178,0.118995)(0.000316,0.118999)(0.000562,0.119009)(0.001000,0.119042)(0.001778,0.119136)(0.003162,0.119403)(0.005623,0.120100)(0.010000,0.121759)(0.017783,0.125275)(0.031623,0.131881)(0.056234,0.143027)(0.100000,0.160327)(0.177828,0.185704)(0.316228,0.221697)(0.562341,0.271730)(1.000000,0.339752)(1.778279,0.428248)(3.162278,0.534452)(5.623413,0.647826)(10.000000,0.753218)(17.782794,0.838557)(31.622777,0.899920)(56.234133,0.940250)(100.000000,0.965180)
      };
      \addplot[black,smooth,densely dashed,line width=0.5pt] coordinates{
	      (0.000100,0.660724)(0.000178,0.659433)(0.000316,0.657168)(0.000562,0.653236)(0.001000,0.646531)(0.001778,0.635433)(0.003162,0.617930)(0.005623,0.592268)(0.010000,0.558284)(0.017783,0.518675)(0.031623,0.478766)(0.056234,0.444558)(0.100000,0.420852)(0.177828,0.410866)(0.316228,0.416966)(0.562341,0.441401)(1.000000,0.486219)(1.778279,0.551764)(3.162278,0.633990)(5.623413,0.723019)(10.000000,0.806020)(17.782794,0.873195)(31.622777,0.921446)(56.234133,0.953129)(100.000000,0.972701)
      };
      \addplot[black,only marks,mark=o,line width=0.5pt] coordinates{
	      (0.000100,0.121227)(0.000178,0.120282)(0.000316,0.120928)(0.000562,0.121864)(0.001000,0.119653)(0.001778,0.119860)(0.003162,0.120877)(0.005623,0.120728)(0.010000,0.122730)(0.017783,0.126662)(0.031623,0.132465)(0.056234,0.145740)(0.100000,0.163505)(0.177828,0.188107)(0.316228,0.224932)(0.562341,0.274258)(1.000000,0.339900)(1.778279,0.432975)(3.162278,0.538123)(5.623413,0.649557)(10.000000,0.755071)(17.782794,0.839693)(31.622777,0.899472)(56.234133,0.940602)(100.000000,0.965418)
      };
      \addplot[black,only marks,mark=x,line width=0.5pt] coordinates{
	      (0.000100,0.662443)(0.000178,0.666947)(0.000316,0.652540)(0.000562,0.650891)(0.001000,0.656302)(0.001778,0.636791)(0.003162,0.619482)(0.005623,0.595861)(0.010000,0.568546)(0.017783,0.523688)(0.031623,0.480737)(0.056234,0.445712)(0.100000,0.421055)(0.177828,0.414472)(0.316228,0.421412)(0.562341,0.444476)(1.000000,0.487235)(1.778279,0.554794)(3.162278,0.638481)(5.623413,0.725179)(10.000000,0.807130)(17.782794,0.873597)(31.622777,0.920899)(56.234133,0.953378)(100.000000,0.972782)
      };
      \draw[->,thick] (axis cs:.3,.08) -- (axis cs:.04,.08) node [right,pos=0,font=\footnotesize] { \BLUE $\sigma(t)=\max(t,0)$ };
      \draw[->,thick] (axis cs:.3,.105) -- (axis cs:.04,.105) node [right,pos=0,font=\footnotesize] { \GREEN $\sigma(t)={\rm erf(t)}$ };
      \draw[->,thick] (axis cs:.0004,.15) -- (axis cs:.0004,.095) node [above,pos=0,font=\footnotesize] { \RED $\sigma(t)=t$ };
      \draw[->,thick] (axis cs:.007,.15) -- (axis cs:.007,.12) node [above,pos=0,font=\footnotesize] { $\sigma(t)=|t|$ };
      \legend{ { $\bar E_{\rm train}$ }, { $\bar E_{\rm test}$ }, { $E_{\rm train}$ }, { $E_{\rm test}$ } }
    \end{loglogaxis}
  \end{tikzpicture}
  \caption{ Neural network performance for Lipschitz continuous $\sigma(\cdot)$, $W_{ij}\sim\mathcal N(0,1)$, as a function of $\gamma$, for 2-class MNIST data (sevens, nines), $n=512$, $T=\hat T=1024$, $p=784$.}
  \label{fig:perf}
\end{figure}
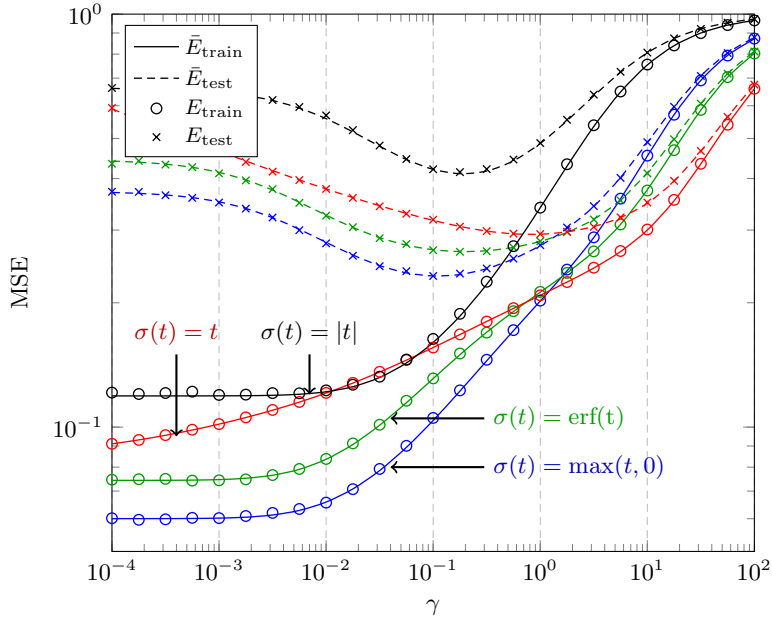

\bigskip

We subsequently provide in Figure~\ref{fig:perf2} the comparison between theoretical formulas and practical simulations for a set of functions $\sigma(\cdot)$ which do not satisfy Assumption~\ref{ass:sigma}, i.e., either discontinuous or non-Lipschitz maps. The closeness between both sets of curves is again remarkably good, although to a lesser extent than for the Lipschitz continuous functions of Figure~\ref{fig:perf}. Also, the achieved performances are generally worse than those observed in Figure~\ref{fig:perf}.

\bigskip

It should be noted that the performance estimates provided by Theorem~\ref{th:Etrain} and Conjecture~\ref{claim:Etest} can be efficiently implemented at low computational cost in practice. Indeed, by diagonalizing $\Phi$ (which is a marginal cost independent of $\gamma$), $\bar E_{\rm train}$ can be computed for all $\gamma$ through mere vector operations; similarly $\bar E_{\rm test}$ is obtained by the marginal cost of a basis change of $\Phi_{\hat XX}$ and the matrix product $\Phi_{X\hat X}\Phi_{\hat XX}$, all remaining operations being accessible through vector operations. 
As a consequence, the simulation durations to generate the aforementioned theoretical curves using the linked \href{https://github.com/Zhenyu-LIAO/RMT4ELM}{Python script} were found to be $100$ to $500$ times faster than to generate the simulated network performances. Beyond their theoretical interest, the provided formulas therefore allow for an efficient offline tuning of the network hyperparameters, notably the choice of an appropriate value for the ridge-regression parameter $\gamma$.

\begin{figure}[h!]
  \centering
  \begin{tikzpicture}[font=\footnotesize]
    \renewcommand{\axisdefaulttryminticks}{4} 
    \tikzstyle{every major grid}+=[style=densely dashed] 
    \tikzstyle{every axis y label}+=[yshift=-10pt] 
    \tikzstyle{every axis x label}+=[yshift=5pt]
    \tikzstyle{every axis legend}+=[cells={anchor=west},fill=white,
        at={(0.02,0.98)}, anchor=north west, font=\scriptsize ]
    \begin{loglogaxis}[
      %ybar,
      width=.8\linewidth,
      xmin=1e-4,
      ymin=9e-2,
      xmax=1e2,
      ymax=1,
      bar width=1.5pt,
      grid=major,
      ymajorgrids=false,
      scaled ticks=true,
      %scale ticks above={4},
      xlabel={$\gamma$},
      ylabel={MSE}
      ]
      \addplot[black,smooth,line width=0.5pt] coordinates{
	      (100,100)
      };
      \addplot[black,densely dashed,smooth,line width=0.5pt] coordinates{
	      (100,100)
      };
      \addplot[black,only marks,mark=o,line width=0.5pt] coordinates{
	      (100,100)
      };
      \addplot[black,only marks,mark=x,line width=0.5pt] coordinates{
	      (100,100)
      };
      %%% sigma(t)=sign(t)
      \addplot[blue,smooth,line width=0.5pt] coordinates{
	      (0.000100,0.101631)(0.000178,0.101631)(0.000316,0.101632)(0.000562,0.101633)(0.001000,0.101636)(0.001778,0.101648)(0.003162,0.101682)(0.005623,0.101782)(0.010000,0.102059)(0.017783,0.102768)(0.031623,0.104403)(0.056234,0.107742)(0.100000,0.113740)(0.177828,0.123312)(0.316228,0.137084)(0.562341,0.155186)(1.000000,0.177208)(1.778279,0.202623)(3.162278,0.231905)(5.623413,0.268115)(10.000000,0.318033)(17.782794,0.390671)(31.622777,0.490380)(56.234133,0.608227)(100.000000,0.723980)
      };
      \addplot[blue,densely dashed,smooth,line width=0.5pt] coordinates{
	      (0.000100,0.481805)(0.000178,0.481469)(0.000316,0.480873)(0.000562,0.479823)(0.001000,0.477986)(0.001778,0.474805)(0.003162,0.469408)(0.005623,0.460552)(0.010000,0.446765)(0.017783,0.426921)(0.031623,0.401243)(0.056234,0.372033)(0.100000,0.343155)(0.177828,0.318452)(0.316228,0.300486)(0.562341,0.290339)(1.000000,0.288079)(1.778279,0.293456)(3.162278,0.306858)(5.623413,0.330612)(10.000000,0.370212)(17.782794,0.433324)(31.622777,0.523583)(56.234133,0.632350)(100.000000,0.740274)
      };
      \addplot[blue,only marks,mark=o,line width=0.5pt] coordinates{
	      (0.000100,0.101380)(0.000178,0.101823)(0.000316,0.099922)(0.000562,0.100597)(0.001000,0.100515)(0.001778,0.100991)(0.003162,0.101582)(0.005623,0.099692)(0.010000,0.101135)(0.017783,0.101268)(0.031623,0.103566)(0.056234,0.106612)(0.100000,0.112760)(0.177828,0.122626)(0.316228,0.137018)(0.562341,0.154291)(1.000000,0.177041)(1.778279,0.201662)(3.162278,0.231028)(5.623413,0.267380)(10.000000,0.318315)(17.782794,0.389261)(31.622777,0.490441)(56.234133,0.607919)(100.000000,0.721985)
      };
      \addplot[blue,only marks,mark=x,line width=0.5pt] coordinates{
	      (0.000100,0.478121)(0.000178,0.475364)(0.000316,0.478287)(0.000562,0.482397)(0.001000,0.476415)(0.001778,0.473307)(0.003162,0.470096)(0.005623,0.460438)(0.010000,0.447509)(0.017783,0.423552)(0.031623,0.397944)(0.056234,0.370366)(0.100000,0.342543)(0.177828,0.314459)(0.316228,0.300897)(0.562341,0.290034)(1.000000,0.288515)(1.778279,0.293273)(3.162278,0.305632)(5.623413,0.329891)(10.000000,0.369766)(17.782794,0.432267)(31.622777,0.523854)(56.234133,0.632386)(100.000000,0.738391)
      };
      %%% sigma(t)=posit(t)
      \addplot[green!60!black,smooth,line width=0.5pt] coordinates{
	      (0.000100,0.101701)(0.000178,0.101702)(0.000316,0.101708)(0.000562,0.101726)(0.001000,0.101779)(0.001778,0.101932)(0.003162,0.102343)(0.005623,0.103353)(0.010000,0.105575)(0.017783,0.109880)(0.031623,0.117238)(0.056234,0.128474)(0.100000,0.144032)(0.177828,0.163809)(0.316228,0.187257)(0.562341,0.214087)(1.000000,0.245645)(1.778279,0.286466)(3.162278,0.344694)(5.623413,0.428571)(10.000000,0.537696)(17.782794,0.657339)(31.622777,0.766400)(56.234133,0.851145)(100.000000,0.909586)
      };
      \addplot[green!60!black,densely dashed,smooth,line width=0.5pt] coordinates{
	      (0.000100,0.480760)(0.000178,0.479440)(0.000316,0.477137)(0.000562,0.473178)(0.001000,0.466539)(0.001778,0.455846)(0.003162,0.439667)(0.005623,0.417297)(0.010000,0.389769)(0.017783,0.360170)(0.031623,0.332563)(0.056234,0.310365)(0.100000,0.295510)(0.177828,0.288591)(0.316228,0.289452)(0.562341,0.297971)(1.000000,0.315159)(1.778279,0.344555)(3.162278,0.392917)(5.623413,0.467323)(10.000000,0.567064)(17.782794,0.678037)(31.622777,0.780004)(56.234133,0.859602)(100.000000,0.914642)
      };
      \addplot[green!60!black,only marks,mark=o,line width=0.5pt] coordinates{
	      (0.000100,0.100741)(0.000178,0.101310)(0.000316,0.100981)(0.000562,0.100913)(0.001000,0.101062)(0.001778,0.101863)(0.003162,0.101262)(0.005623,0.102705)(0.010000,0.104395)(0.017783,0.108961)(0.031623,0.117437)(0.056234,0.127768)(0.100000,0.143755)(0.177828,0.163859)(0.316228,0.187180)(0.562341,0.212634)(1.000000,0.245899)(1.778279,0.286137)(3.162278,0.345357)(5.623413,0.431214)(10.000000,0.535742)(17.782794,0.656330)(31.622777,0.767266)(56.234133,0.850659)(100.000000,0.910035)
      };
      \addplot[green!60!black,only marks,mark=x,line width=0.5pt] coordinates{
	      (0.000100,0.477432)(0.000178,0.478196)(0.000316,0.481604)(0.000562,0.472204)(0.001000,0.467143)(0.001778,0.456010)(0.003162,0.444912)(0.005623,0.413607)(0.010000,0.386393)(0.017783,0.359360)(0.031623,0.329923)(0.056234,0.311097)(0.100000,0.295346)(0.177828,0.287666)(0.316228,0.287885)(0.562341,0.296445)(1.000000,0.315249)(1.778279,0.343831)(3.162278,0.392565)(5.623413,0.469413)(10.000000,0.564588)(17.782794,0.676932)(31.622777,0.780833)(56.234133,0.859157)(100.000000,0.915229)
      };
      %%% sigma(t)=-1/2.t^2+1
      \addplot[red,smooth,line width=0.5pt] coordinates{
	      (0.000100,0.109390)(0.000178,0.109391)(0.000316,0.109394)(0.000562,0.109404)(0.001000,0.109434)(0.001778,0.109523)(0.003162,0.109773)(0.005623,0.110425)(0.010000,0.111966)(0.017783,0.115211)(0.031623,0.121265)(0.056234,0.131404)(0.100000,0.147025)(0.177828,0.169765)(0.316228,0.201729)(0.562341,0.245656)(1.000000,0.304683)(1.778279,0.381201)(3.162278,0.474594)(5.623413,0.579125)(10.000000,0.684455)(17.782794,0.779147)(31.622777,0.855003)(56.234133,0.909665)(100.000000,0.945820)
      };
      \addplot[red,smooth,densely dashed,line width=0.5pt] coordinates{
	      (0.000100,0.730571)(0.000178,0.729008)(0.000316,0.726265)(0.000562,0.721505)(0.001000,0.713390)(0.001778,0.699965)(0.003162,0.678800)(0.005623,0.647772)(0.010000,0.606627)(0.017783,0.558420)(0.031623,0.509157)(0.056234,0.465453)(0.100000,0.432356)(0.177828,0.412942)(0.316228,0.409171)(0.562341,0.422792)(1.000000,0.455524)(1.778279,0.508155)(3.162278,0.578741)(5.623413,0.661142)(10.000000,0.745685)(17.782794,0.822170)(31.622777,0.883465)(56.234133,0.927547)(100.000000,0.956630)
      };
      \addplot[red,only marks,mark=o,line width=0.5pt] coordinates{
	      (0.000100,0.112432)(0.000178,0.112563)(0.000316,0.111459)(0.000562,0.112554)(0.001000,0.114305)(0.001778,0.112503)(0.003162,0.113161)(0.005623,0.115196)(0.010000,0.117623)(0.017783,0.120830)(0.031623,0.127196)(0.056234,0.138830)(0.100000,0.156935)(0.177828,0.182148)(0.316228,0.215164)(0.562341,0.260786)(1.000000,0.320765)(1.778279,0.392710)(3.162278,0.488268)(5.623413,0.587657)(10.000000,0.688738)(17.782794,0.782422)(31.622777,0.858345)(56.234133,0.909954)(100.000000,0.946666)
      };
      \addplot[red,only marks,mark=x,line width=0.5pt] coordinates{
	      (0.000100,0.759612)(0.000178,0.767696)(0.000316,0.743818)(0.000562,0.741448)(0.001000,0.740763)(0.001778,0.707827)(0.003162,0.693511)(0.005623,0.664490)(0.010000,0.624933)(0.017783,0.562682)(0.031623,0.509503)(0.056234,0.478548)(0.100000,0.441849)(0.177828,0.427179)(0.316228,0.421262)(0.562341,0.436005)(1.000000,0.469829)(1.778279,0.516654)(3.162278,0.591679)(5.623413,0.670545)(10.000000,0.751154)(17.782794,0.824556)(31.622777,0.886383)(56.234133,0.927872)(100.000000,0.957088)
      };
      \draw[->,thick] (axis cs:2,.12) -- (axis cs:.2,.12) node [right,pos=0,font=\footnotesize] { \BLUE $\sigma(t)={\rm sign}(t)$ };
      \draw[->,thick] (axis cs:2,.15) -- (axis cs:.12,.15) node [right,pos=0,font=\footnotesize] { \GREEN $\sigma(t)=1_{\{t>0\}}$ };
      \draw[->,thick] (axis cs:2,.2) -- (axis cs:.3,.2) node [right,pos=0,font=\footnotesize] { \RED $\sigma(t)=1-\frac12t^2$ };
      \legend{ { $\bar E_{\rm train}$ }, { $\bar E_{\rm test}$ }, { $E_{\rm train}$ }, { $E_{\rm test}$ } }
    \end{loglogaxis}
  \end{tikzpicture}
  \caption{Neural network performance for $\sigma(\cdot)$ either discontinuous or non Lipschitz, $W_{ij}\sim\mathcal N(0,1)$, as a function of $\gamma$, for 2-class MNIST data (sevens, nines), $n=512$, $T=\hat T=1024$, $p=784$.}
  \label{fig:perf2}
\end{figure}
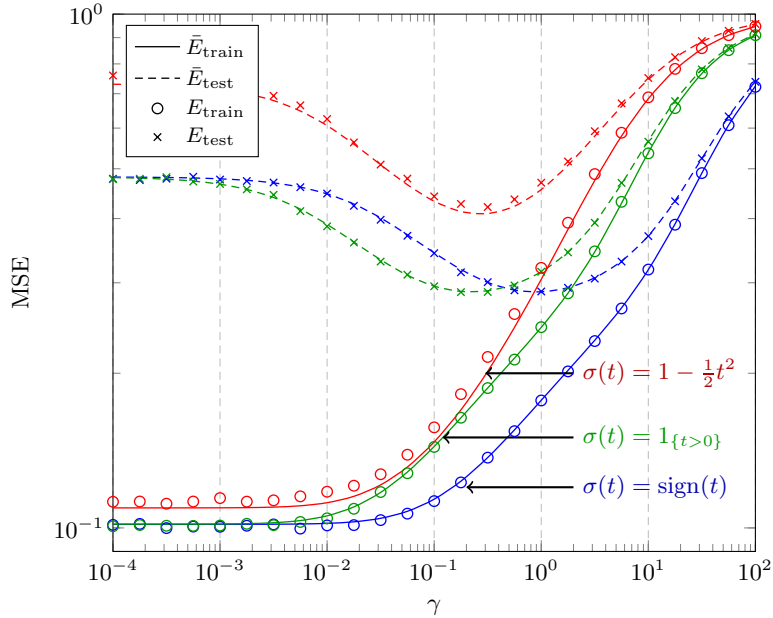

\subsection{The underlying kernel}

Theorem~\ref{th:EQ} and the subsequent theoretical findings importantly reveal that the neural network performances are directly related to the Gram matrix $\Phi$, which acts as a deterministic kernel on the dataset $X$. This is in fact a well-known result found e.g., in %\citep{HUA12} where a link between the present single-layer networks and the so-called least-square support vector machine method is exhibited \textcolor{black}{(indeed, extreme learning machines for classification purposes may be seen as error distance-based classifiers from a training set of input features $\sigma(Wx_1),\ldots,\sigma(Wx_T)$)}, or in 
\citep{WIL98} where it is shown that, as $n\to\infty$ alone, the neural network behaves as a mere kernel operator \textcolor{black}{(this observation is retrieved here in the subsequent Section~\ref{sec:limiting_cases})}. This remark was then put at an advantage in \citep{RAH07} and subsequent works, where \textcolor{black}{random feature maps of the type $x\mapsto \sigma(Wx)$ are proposed as a computationally efficient proxy to evaluate kernels $(x,y)\mapsto \Phi(x,y)$}. 

As discussed previously, the formulas for $\bar E_{\rm train}$ and $\bar E_{\rm test}$ suggest that good performances are achieved if the dominant eigenvectors of $\Phi$ show a good alignment to $Y$ (and similarly for $\Phi_{X\hat X}$ and $\hat Y$). This naturally drives us to finding a priori simple regression tasks where ill-choices of $\Phi$ may annihilate the neural network performance. Following recent works on the asymptotic performance analysis of kernel methods for Gaussian mixture models \citep{COU16,LIA17b,MAI17} and \citep{COU17}, we describe here such a task. 

Let $x_1,\ldots,x_{T/2}\sim \mathcal N(0,\frac1pC_1)$ and $x_{T/2+1},\ldots,x_T\sim \mathcal N(0,\frac1pC_2)$ where $C_1$ and $C_2$ are such that $\tr C_1=\tr C_2$, $\|C_1\|,\|C_2\|$ are bounded, and $\tr(C_1-C_2)^2=O(p)$. Accordingly, $y_1,\ldots,y_{T/2+1}=-1$ and $y_{T/2+1},\ldots,y_T=1$. It is proved in the aforementioned articles that, under these conditions, it is theoretically possible, in the large $p,T$ limit, to classify the data using a kernel least-square support vector machine (that is, with a training dataset) or with a kernel spectral clustering method (that is, in a completely unsupervised manner) with a non-trivial limiting error probability (i.e., neither zero nor one). This scenario has the interesting feature that $x_i^\trans x_j\to 0$ almost surely for all $i\neq j$ while $\|x_i\|^2-\frac1p\tr (\frac12C_1+\frac12C_2)\to 0$, almost surely, irrespective of the class of $x_i$, thereby allowing for a Taylor expansion of the non-linear kernels as early proposed in \citep{ELK10}. 

Transposed to our present setting, the aforementioned Taylor expansion allows for a consistent approximation $\tilde{\Phi}$ of $\Phi$ by an {\it information-plus-noise} (spiked) random matrix model (see e.g., \citep{LOU10b,BEN12}). In the present Gaussian mixture context, it is shown in \citep{COU16} that data classification is (asymptotically at least) only possible if $\tilde\Phi_{ij}$ explicitly contains the quadratic term $(x_i^\trans x_j)^2$ (or combinations of $(x_i^2)^\trans x_j$, $(x_j^2)^\trans x_i$, and $(x_i^2)^\trans (x_j^2)$). \textcolor{black}{In particular, letting $a,b\sim \mathcal N(0,C_i)$ with $i=1,2$, it is easily seen from Table~\ref{tab:Phi} that only $\max(t,0)$, $|t|$, and $\cos(t)$ can realize the task}. Indeed, we have the following Taylor expansions around $x=0$:
\begin{align*}
\asin(x) &= x+O(x^3) \\
\sinh(x) &= x+O(x^3) \\
\acos(x) &= \frac{\pi}2-x + O(x^3) \\
\cosh(x) &= 1+\frac{x^2}2 + O(x^3) \\
x\acos(-x)+\sqrt{1-x^2} &= 1+\frac{\pi x}2+\frac{x^2}2+O(x^3) \\
x\asin(x)+\sqrt{1-x^2} &= 1+\frac{x^2}2+O(x^3)
\end{align*}
where only the last three functions \textcolor{black}{(only found in the expression of $\Phi_{ab}$ corresponding to $\sigma(t)=\max(t,0)$, $|t|$, or $\cos(t)$)} exhibit a quadratic term.

More surprisingly maybe, recalling now Equation~\eqref{eq:Phi_poly2} which considers non-necessarily Gaussian $W_{ij}$ with moments $m_k$ of order $k$, a more refined analysis shows that the aforementioned Gaussian mixture classification task will fail if $m_3=0$ and $m_4=m_2^2$, so for instance for $W_{ij}\in\{-1,1\}$ Bernoulli with parameter $\frac12$. The performance comparison of this scenario is shown in the top part of Figure~\ref{fig:poly2} for $\sigma(t)=-\frac12t^2+1$ and $C_1=\diag(I_{p/2},4I_{p/2})$, $C_2=\diag(4I_{p/2},I_{p/2})$, for $W_{ij}\sim \mathcal N(0,1)$ and $W_{ij}\sim {\rm Bern}$ (that is, Bernoulli $\{(-1,\frac12),(1,\frac12)\}$). The choice of $\sigma(t)=\zeta_2 t^2+\zeta_1t+\zeta_0$ with $\zeta_1=0$ is motivated by \citep{COU16,COU17} where it is shown, in a somewhat different setting, that this choice is optimal for class recovery. Note that, while the test performances are overall rather weak in this setting, for $W_{ij}\sim \mathcal N(0,1)$, $E_{\rm test}$ drops below one (the amplitude of the $\hat Y_{ij}$)\textcolor{black}{, thereby indicating that non-trivial classification is performed}. This is not so for the Bernoulli $W_{ij}\sim {\rm Bern}$ case where $E_{\rm test}$ is systematically greater than $|\hat Y_{ij}|\textcolor{black}{=1}$. This is theoretically explained by the fact that, from Equation~\eqref{eq:Phi_poly2}, $\Phi_{ij}$ contains structural information about the data classes through the term $2m_2^2(x_i^\trans x_j)^2+(m_4-3m_2^2)(x_i^2)^\trans(x_j^2)$ which induces an information-plus-noise model for $\Phi$ as long as $2m_2^2+(m_4-3m_2^2)\neq 0$, i.e., $m_4\neq m_2^2$ (see \citep{COU16} for details). This is visually seen in the bottom part of Figure~\ref{fig:poly2} where the Gaussian scenario presents an isolated eigenvalue for $\Phi$ with corresponding structured eigenvector, which is not the case of the Bernoulli scenario.
To complete this discussion, it appears relevant in the present setting to choose $W_{ij}$ in such a way that $m_4-m_2^2$ is far from zero, thus suggesting the interest of heavy-tailed distributions. To confirm this prediction, Figure~\ref{fig:poly2} additionally displays the performance achieved and the spectrum of $\Phi$ observed for $W_{ij}\sim {\rm Stud}$, that is, following a Student-t distribution with degree of freedom $\nu=7$ normalized to unit variance (in this case $m_2=1$ and $m_4=5$). Figure~\ref{fig:poly2} confirms the large superiority of this choice over the Gaussian case (note nonetheless the slight inaccuracy of our theoretical formulas in this case, which is likely due to too small values of $p,n,T$ to accommodate $W_{ij}$ with higher order moments, an observation which is confirmed in simulations when letting $\nu$ be even smaller).

\begin{figure}[h!]
  \centering
  \begin{tikzpicture}[font=\footnotesize]
    \renewcommand{\axisdefaulttryminticks}{4} 
    \tikzstyle{every major grid}+=[style=densely dashed] 
    \tikzstyle{every axis y label}+=[yshift=-10pt] 
    \tikzstyle{every axis x label}+=[yshift=5pt]
    \tikzstyle{every axis legend}+=[cells={anchor=west},fill=white,
        at={(0.02,0.98)}, anchor=north west, font=\scriptsize ]
    \begin{loglogaxis}[
      %ybar,
      width=.8\linewidth,
      xmin=1e-4,
      ymin=1.8e-1,
      xmax=1e2,
      ymax=3,
      bar width=1.5pt,
      grid=major,
      ymajorgrids=false,
      scaled ticks=true,
      %scale ticks above={4},
      xlabel={$\gamma$},
      ylabel={MSE}
      ]
      \addplot[black,smooth,line width=0.5pt] coordinates{
	      (100,100)
      };
      \addplot[black,densely dashed,smooth,line width=0.5pt] coordinates{
	      (100,100)
      };
      \addplot[black,only marks,mark=o,line width=0.5pt] coordinates{
	      (100,100)
      };
      \addplot[black,only marks,mark=x,line width=0.5pt] coordinates{
	      (100,100)
      };
      \addplot[black,densely dashed,line width=1pt] coordinates{(1e-4,1)(1e2,1)};
      %%% Gaussian
      \addplot[blue,smooth,line width=0.5pt] coordinates{
	      (0.000100,0.294341)(0.000178,0.294341)(0.000316,0.294341)(0.000562,0.294341)(0.001000,0.294341)(0.001778,0.294342)(0.003162,0.294344)(0.005623,0.294350)(0.010000,0.294370)(0.017783,0.294430)(0.031623,0.294608)(0.056234,0.295120)(0.100000,0.296495)(0.177828,0.299886)(0.316228,0.307401)(0.562341,0.322223)(1.000000,0.348439)(1.778279,0.390640)(3.162278,0.452929)(5.623413,0.536305)(10.000000,0.634795)(17.782794,0.734852)(31.622777,0.821727)(56.234133,0.887362)(100.000000,0.931925)
      };
      \addplot[blue,densely dashed,smooth,line width=0.5pt] coordinates{
	      (0.000100,1.189155)(0.000178,1.189028)(0.000316,1.188801)(0.000562,1.188398)(0.001000,1.187683)(0.001778,1.186416)(0.003162,1.184179)(0.005623,1.180247)(0.010000,1.173400)(0.017783,1.161661)(0.031623,1.142060)(0.056234,1.110722)(0.100000,1.063879)(0.177828,1.000357)(0.316228,0.924614)(0.562341,0.847437)(1.000000,0.782535)(1.778279,0.741642)(3.162278,0.731375)(5.623413,0.751623)(10.000000,0.794661)(17.782794,0.847103)(31.622777,0.895901)(56.234133,0.933822)(100.000000,0.959881)
      };
      \addplot[blue,only marks,mark=o,line width=0.5pt] coordinates{
	      (0.000100,0.293507)(0.000178,0.298607)(0.000316,0.294135)(0.000562,0.297307)(0.001000,0.295185)(0.001778,0.295655)(0.003162,0.295997)(0.005623,0.300262)(0.010000,0.301801)(0.017783,0.295596)(0.031623,0.291974)(0.056234,0.295627)(0.100000,0.300309)(0.177828,0.300973)(0.316228,0.309001)(0.562341,0.327726)(1.000000,0.352140)(1.778279,0.396906)(3.162278,0.454890)(5.623413,0.543404)(10.000000,0.636879)(17.782794,0.736362)(31.622777,0.820428)(56.234133,0.887852)(100.000000,0.932523)
      };
      \addplot[blue,only marks,mark=x,line width=0.5pt] coordinates{
	      (0.000100,1.190005)(0.000178,1.193406)(0.000316,1.191980)(0.000562,1.190730)(0.001000,1.210303)(0.001778,1.193818)(0.003162,1.209886)(0.005623,1.195055)(0.010000,1.193258)(0.017783,1.170058)(0.031623,1.149518)(0.056234,1.122838)(0.100000,1.079273)(0.177828,0.991851)(0.316228,0.918450)(0.562341,0.844621)(1.000000,0.780261)(1.778279,0.742204)(3.162278,0.732798)(5.623413,0.753116)(10.000000,0.796694)(17.782794,0.847020)(31.622777,0.894876)(56.234133,0.934895)(100.000000,0.960317)
      };
      %%% Bernoulli (-1,.5),(1,.5)
      \addplot[green!60!black,smooth,line width=0.5pt] coordinates{
	      (0.000100,0.500274)(0.000178,0.500274)(0.000316,0.500274)(0.000562,0.500275)(0.001000,0.500275)(0.001778,0.500276)(0.003162,0.500279)(0.005623,0.500288)(0.010000,0.500318)(0.017783,0.500408)(0.031623,0.500675)(0.056234,0.501434)(0.100000,0.503458)(0.177828,0.508366)(0.316228,0.518932)(0.562341,0.538814)(1.000000,0.571474)(1.778279,0.618657)(3.162278,0.678988)(5.623413,0.747181)(10.000000,0.814886)(17.782794,0.873751)(31.622777,0.918972)(56.234133,0.950336)(100.000000,0.970516)
      };
      \addplot[green!60!black,densely dashed,smooth,line width=0.5pt] coordinates{
	      (0.000100,2.012558)(0.000178,2.012345)(0.000316,2.011967)(0.000562,2.011295)(0.001000,2.010103)(0.001778,2.007990)(0.003162,2.004258)(0.005623,1.997698)(0.010000,1.986266)(0.017783,1.966646)(0.031623,1.933825)(0.056234,1.881168)(0.100000,1.801973)(0.177828,1.693331)(0.316228,1.560855)(0.562341,1.419527)(1.000000,1.287864)(1.778279,1.179992)(3.162278,1.101796)(5.623413,1.051722)(10.000000,1.023598)(17.782794,1.009784)(31.622777,1.003776)(56.234133,1.001395)(100.000000,1.000503)
      };
      \addplot[green!60!black,only marks,mark=o,line width=0.5pt] coordinates{
	      (0.000100,0.499064)(0.000178,0.494018)(0.000316,0.499220)(0.000562,0.498713)(0.001000,0.499637)(0.001778,0.496288)(0.003162,0.498687)(0.005623,0.496910)(0.010000,0.499307)(0.017783,0.502079)(0.031623,0.500345)(0.056234,0.501988)(0.100000,0.502470)(0.177828,0.511545)(0.316228,0.518892)(0.562341,0.539897)(1.000000,0.570803)(1.778279,0.617909)(3.162278,0.681177)(5.623413,0.747050)(10.000000,0.815582)(17.782794,0.874467)(31.622777,0.919037)(56.234133,0.950240)(100.000000,0.970654)
      };
      \addplot[green!60!black,only marks,mark=x,line width=0.5pt] coordinates{
	      (0.000100,2.021738)(0.000178,2.041790)(0.000316,2.037710)(0.000562,2.018973)(0.001000,2.009969)(0.001778,2.036796)(0.003162,2.020555)(0.005623,2.016873)(0.010000,1.982284)(0.017783,1.980500)(0.031623,1.948979)(0.056234,1.899229)(0.100000,1.801710)(0.177828,1.690282)(0.316228,1.562571)(0.562341,1.427339)(1.000000,1.287909)(1.778279,1.181485)(3.162278,1.102856)(5.623413,1.050473)(10.000000,1.024870)(17.782794,1.009816)(31.622777,1.003577)(56.234133,1.001429)(100.000000,1.000477)
      };
      %%% Bernoulli skewed
      %%% Student-t nu=7
      \addplot[red,smooth,line width=0.5pt] coordinates{
	      (0.000100,0.211604)(0.000178,0.211604)(0.000316,0.211604)(0.000562,0.211604)(0.001000,0.211604)(0.001778,0.211605)(0.003162,0.211606)(0.005623,0.211610)(0.010000,0.211624)(0.017783,0.211664)(0.031623,0.211786)(0.056234,0.212134)(0.100000,0.213076)(0.177828,0.215416)(0.316228,0.220655)(0.562341,0.231151)(1.000000,0.250188)(1.778279,0.282163)(3.162278,0.332746)(5.623413,0.407560)(10.000000,0.507422)(17.782794,0.622510)(31.622777,0.734209)(56.234133,0.826188)(100.000000,0.892492)
      };
      \addplot[red,smooth,densely dashed,line width=0.5pt] coordinates{
	      (0.000100,0.843056)(0.000178,0.842969)(0.000316,0.842814)(0.000562,0.842538)(0.001000,0.842049)(0.001778,0.841182)(0.003162,0.839650)(0.005623,0.836957)(0.010000,0.832266)(0.017783,0.824214)(0.031623,0.810748)(0.056234,0.789162)(0.100000,0.756777)(0.177828,0.712650)(0.316228,0.659794)(0.562341,0.605924)(1.000000,0.561450)(1.778279,0.536486)(3.162278,0.539069)(5.623413,0.573712)(10.000000,0.638142)(17.782794,0.720447)(31.622777,0.802835)(56.234133,0.871147)(100.000000,0.920385)
      };
      \addplot[red,only marks,mark=o,line width=0.5pt] coordinates{
	      (0.000100,0.224073)(0.000178,0.225284)(0.000316,0.220261)(0.000562,0.220192)(0.001000,0.223329)(0.001778,0.223665)(0.003162,0.225116)(0.005623,0.222437)(0.010000,0.223523)(0.017783,0.224997)(0.031623,0.222922)(0.056234,0.223748)(0.100000,0.226553)(0.177828,0.228833)(0.316228,0.232534)(0.562341,0.242176)(1.000000,0.264195)(1.778279,0.299556)(3.162278,0.349944)(5.623413,0.425668)(10.000000,0.521760)(17.782794,0.629774)(31.622777,0.738828)(56.234133,0.827399)(100.000000,0.893128)
      };
      \addplot[red,only marks,mark=x,line width=0.5pt] coordinates{
	      (0.000100,0.877086)(0.000178,0.880711)(0.000316,0.894424)(0.000562,0.893535)(0.001000,0.892084)(0.001778,0.888574)(0.003162,0.879327)(0.005623,0.881115)(0.010000,0.869189)(0.017783,0.874815)(0.031623,0.846885)(0.056234,0.821074)(0.100000,0.795166)(0.177828,0.739262)(0.316228,0.683697)(0.562341,0.624960)(1.000000,0.584023)(1.778279,0.559675)(3.162278,0.560275)(5.623413,0.591154)(10.000000,0.651348)(17.782794,0.729238)(31.622777,0.806514)(56.234133,0.871218)(100.000000,0.921555)
      };
      \draw[->,thick] (axis cs:3,.3) -- (axis cs:.3,.3) node [right,pos=0,font=\footnotesize] { \BLUE $W_{ij}\sim \mathcal N(0,1)$ };
      \draw[->,thick] (axis cs:.004,.4) -- (axis cs:.004,.5) node [below,pos=0,font=\footnotesize] { \GREEN $W_{ij}\sim {\rm Bern}$ };
      %\draw[->,thick] (axis cs:.1,.4) -- (axis cs:.23,.4) node [left,pos=0,font=\footnotesize] { \RED $W_{ij}\sim {\rm SkBern}$ };
      \draw[->,thick] (axis cs:3,.23) -- (axis cs:.6,.23) node [right,pos=0,font=\footnotesize] { \RED $W_{ij}\sim {\rm Stud}$ };
      \legend{ { $\bar E_{\rm train}$ }, { $\bar E_{\rm test}$ }, { $E_{\rm train}$ }, { $E_{\rm test}$ } }
    \end{loglogaxis}
  \end{tikzpicture}

\medskip

  \begin{tabular}{ccc}
  \begin{tikzpicture}[font=\footnotesize]
    \renewcommand{\axisdefaulttryminticks}{4} 
    \tikzstyle{every major grid}+=[style=densely dashed]       
    \tikzstyle{every axis y label}+=[yshift=-10pt] 
    \tikzstyle{every axis x label}+=[yshift=5pt]
    \tikzstyle{every axis legend}+=[cells={anchor=west},fill=white,
        at={(0.98,0.98)}, anchor=north east, font=\scriptsize ]
    \begin{axis}[
      %ybar,
      width=.42\linewidth,
      height=.4\linewidth,
      xmin=0,
      ymin=0,
      xmax=14,
      xticklabels={},
      yticklabels={},
      axis lines=left,
      xtick=\empty,
      ytick=\empty,
      bar width=1pt,
      grid=none,
      ymajorgrids=false,
      scaled ticks=true,
      %scale ticks above={4},
      ]
      \addplot+[ybar,mark=none,color=black,fill=blue!80!black] coordinates{
	      (1.455462,0.029329)(1.588780,0.043993)(1.722098,0.124648)(1.855416,0.256628)(1.988734,0.337282)(2.122052,0.381275)(2.255370,0.403272)(2.388688,0.417937)(2.522006,0.454598)(2.655324,0.425269)(2.788642,0.425269)(2.921960,0.410604)(3.055278,0.403272)(3.188596,0.381275)(3.321914,0.359279)(3.455232,0.337282)(3.588550,0.315286)(3.721868,0.285957)(3.855186,0.271292)(3.988504,0.241963)(4.121822,0.212634)(4.255140,0.205302)(4.388458,0.161309)(4.521776,0.124648)(4.655094,0.124648)(4.788412,0.080654)(4.921730,0.080654)(5.055048,0.051326)(5.188366,0.051326)(5.321684,0.036661)(5.455002,0.021997)(5.588320,0.007332)(5.721638,0.014664)(5.854956,0.007332)(5.988274,0.007332)(6.121592,0.000000)(6.254910,0.000000)(6.388228,0.000000)(6.521546,0.000000)(6.654864,0.000000)(6.788182,0.000000)(6.921500,0.000000)(7.054818,0.000000)(7.188136,0.000000)(7.321455,0.000000)(7.454773,0.000000)(7.588091,0.000000)(7.721409,0.000000)(7.854727,0.000000)(7.988045,0.007332)
      };
      \node[font=\footnotesize] at (axis cs:7.99,.18) { close };
      \draw[->,thick] (axis cs:7.99,.1) -- (axis cs:7.99,.02) node [above,pos=0,font=\footnotesize] { spike };
      %\addplot[black,line width=1pt] plot coordinates{
      %};
    \end{axis}
  \end{tikzpicture}
  &
  \begin{tikzpicture}[font=\footnotesize]
    \renewcommand{\axisdefaulttryminticks}{4} 
    \tikzstyle{every major grid}+=[style=densely dashed]       
    \tikzstyle{every axis y label}+=[yshift=-10pt] 
    \tikzstyle{every axis x label}+=[yshift=5pt]
    \tikzstyle{every axis legend}+=[cells={anchor=west},fill=white,
        at={(0.98,0.98)}, anchor=north east, font=\scriptsize ]
    \begin{axis}[
      %ybar,
      width=.42\linewidth,
      height=.4\linewidth,
      xmin=0,
      ymin=0,
      xmax=14,
      xticklabels={},
      yticklabels={},
      bar width=1.3pt,
      grid=none,
      axis lines=left,
      xtick=\empty,
      ytick=\empty,
      ymajorgrids=false,
      scaled ticks=true,
      %scale ticks above={4},
      ]
      \addplot+[ybar,mark=none,color=black,fill=green!60!black] coordinates{
	      (1.542570,0.041936)(1.729049,0.115323)(1.915528,0.272583)(2.102007,0.372180)(2.288486,0.424600)(2.474965,0.450810)(2.661444,0.424600)(2.847923,0.429842)(3.034401,0.408874)(3.220880,0.382664)(3.407359,0.361696)(3.593838,0.314518)(3.780317,0.277824)(3.966796,0.241131)(4.153275,0.214921)(4.339754,0.167743)(4.526233,0.136291)(4.712712,0.104839)(4.899191,0.099597)(5.085669,0.036694)(5.272148,0.036694)(5.458627,0.026210)(5.645106,0.010484)(5.831585,0.005242)(6.018064,0.005242)
      };
      \node[font=\footnotesize] at (axis cs:10,.16) { no spike };
      %\addplot[black,line width=1pt] plot coordinates{
      %};
    \end{axis}
  \end{tikzpicture}
  &
  \begin{tikzpicture}[font=\footnotesize]
    \renewcommand{\axisdefaulttryminticks}{4} 
    \tikzstyle{every major grid}+=[style=densely dashed]       
    \tikzstyle{every axis y label}+=[yshift=-10pt] 
    \tikzstyle{every axis x label}+=[yshift=5pt]
    \tikzstyle{every axis legend}+=[cells={anchor=west},fill=white,
        at={(0.98,0.98)}, anchor=north east, font=\scriptsize ]
    \begin{axis}[
      %ybar,
      width=.42\linewidth,
      height=.4\linewidth,
      xmin=0,
      ymin=0,
      xmax=14,
      xticklabels={},
      yticklabels={},
      bar width=1.5pt,
      grid=none,
      axis lines=left,
      xtick=\empty,
      ytick=\empty,
      ymajorgrids=false,
      scaled ticks=true,
      %scale ticks above={4},
      ]
      \addplot+[ybar,mark=none,color=black,fill=red] coordinates{
	      (1.528825,0.030309)(1.754585,0.134227)(1.980345,0.281443)(2.206105,0.381030)(2.431866,0.424329)(2.657626,0.428659)(2.883386,0.411340)(3.109146,0.398350)(3.334906,0.359381)(3.560666,0.316082)(3.786427,0.285773)(4.012187,0.246804)(4.237947,0.212165)(4.463707,0.168866)(4.689467,0.134227)(4.915227,0.082268)(5.140987,0.060618)(5.366748,0.030309)(5.592508,0.021649)(5.818268,0.008660)(6.044028,0.004330)(6.269788,0.000000)(6.495548,0.004330)(6.721309,0.000000)(6.947069,0.000000)(7.172829,0.000000)(7.398589,0.000000)(7.624349,0.000000)(7.850109,0.000000)(8.075870,0.000000)(8.301630,0.000000)(8.527390,0.000000)(8.753150,0.000000)(8.978910,0.000000)(9.204670,0.000000)(9.430430,0.000000)(9.656191,0.000000)(9.881951,0.000000)(10.107711,0.000000)(10.333471,0.000000)(10.559231,0.000000)(10.784991,0.000000)(11.010752,0.000000)(11.236512,0.000000)(11.462272,0.000000)(11.688032,0.000000)(11.913792,0.000000)(12.139552,0.000000)(12.365313,0.000000)(12.591073,0.004330)
      };
      \node[font=\footnotesize] at (axis cs:12.59,.18) { far };
      \draw[->,thick] (axis cs:12.59,.1) -- (axis cs:12.59,.02) node [above,pos=0,font=\footnotesize] { spike };
      %\addplot[black,line width=1pt] plot coordinates{
      %};
    \end{axis}
  \end{tikzpicture}
  \\
%  Spectrum of $\Phi$: $W_{ij}\sim \mathcal N(0,1)$ & Spectrum of $\Phi$: $W_{ij}\sim {\rm Bern}$
%  \ \\
%  \\
  \begin{tikzpicture}[font=\footnotesize]
    \renewcommand{\axisdefaulttryminticks}{4} 
    \tikzstyle{every major grid}+=[style=densely dashed] 
    \tikzstyle{every axis y label}+=[yshift=-10pt] 
    \tikzstyle{every axis x label}+=[yshift=5pt]
    \tikzstyle{every axis legend}+=[cells={anchor=west},fill=white,
        at={(0.02,0.98)}, anchor=north west, font=\scriptsize ]
    \begin{axis}[
      %ybar,
      width=.42\linewidth,
      height=.25\linewidth,
      xmin=1,
      xmax=1024,
      grid=none,
      axis y line=left,
      axis x line=center,
      xtick=\empty,
      ytick=\empty,
      grid=major,
      ymajorgrids=false,
      scaled ticks=true,
      %scale ticks above={4},
      ]
      \addplot[blue,smooth,line width=0.5pt] coordinates{
	      (1,0.037818)(6,0.029495)(11,0.024266)(16,0.036880)(21,0.050837)(26,0.039024)(31,0.030538)(36,0.030677)(41,0.037307)(46,0.024004)(51,0.031549)(56,0.021945)(61,0.033042)(66,0.052454)(71,0.024725)(76,0.030202)(81,0.023079)(86,0.038887)(91,0.030747)(96,0.027759)(101,0.019611)(106,0.034334)(111,0.049252)(116,0.029411)(121,0.033057)(126,0.030803)(131,0.032121)(136,0.018558)(141,0.033234)(146,0.066166)(151,0.015076)(156,0.040411)(161,0.022116)(166,0.024241)(171,0.030821)(176,0.015354)(181,0.021561)(186,0.018662)(191,0.030717)(196,0.040237)(201,0.025057)(206,0.037179)(211,0.037981)(216,0.032532)(221,0.023865)(226,0.027581)(231,0.025257)(236,0.048615)(241,0.014670)(246,0.032767)(251,0.023645)(256,0.032883)(261,0.032484)(266,0.035468)(271,0.019458)(276,0.019662)(281,0.024293)(286,0.025439)(291,0.050837)(296,0.023538)(301,0.027808)(306,0.045330)(311,0.036850)(316,0.037780)(321,0.017372)(326,0.030170)(331,0.033653)(336,0.028505)(341,0.028565)(346,0.037981)(351,0.029885)(356,0.032274)(361,0.033847)(366,0.040845)(371,0.033351)(376,0.039274)(381,0.025664)(386,0.025269)(391,0.020643)(396,0.028030)(401,0.020515)(406,0.020776)(411,0.040266)(416,0.034470)(421,0.022618)(426,0.037207)(431,0.019446)(436,0.034692)(441,0.025039)(446,0.037864)(451,0.028708)(456,0.024751)(461,0.030001)(466,0.038186)(471,0.026238)(476,0.035579)(481,0.041447)(486,0.020271)(491,0.067070)(496,0.031143)(501,0.018277)(506,0.017066)(511,0.026323)(516,-0.022501)(521,-0.029083)(526,-0.028991)(531,-0.035335)(536,-0.020832)(541,-0.021170)(546,-0.029379)(551,-0.023367)(556,-0.025302)(561,-0.028235)(566,-0.069148)(571,-0.020906)(576,-0.029888)(581,-0.015041)(586,-0.041608)(591,-0.056512)(596,-0.032243)(601,-0.026623)(606,-0.031555)(611,-0.020229)(616,-0.019793)(621,-0.025792)(626,-0.025293)(631,-0.031517)(636,-0.030252)(641,-0.026219)(646,-0.032342)(651,-0.020373)(656,-0.015442)(661,-0.032583)(666,-0.029604)(671,-0.029873)(676,-0.034837)(681,-0.020020)(686,-0.017577)(691,-0.020616)(696,-0.031620)(701,-0.035262)(706,-0.055430)(711,-0.015695)(716,-0.023333)(721,-0.032576)(726,-0.015941)(731,-0.018878)(736,-0.018232)(741,-0.012552)(746,-0.023444)(751,-0.018351)(756,-0.023812)(761,-0.015155)(766,-0.025720)(771,-0.069702)(776,-0.037705)(781,-0.016926)(786,-0.027963)(791,-0.036155)(796,-0.017563)(801,-0.023209)(806,-0.034810)(811,-0.058750)(816,-0.017845)(821,-0.038203)(826,-0.018846)(831,-0.025815)(836,-0.015856)(841,-0.023029)(846,-0.059483)(851,-0.022696)(856,-0.021433)(861,-0.023523)(866,-0.015594)(871,-0.022994)(876,-0.039522)(881,-0.023515)(886,-0.031767)(891,-0.024802)(896,-0.045565)(901,-0.026474)(906,-0.027392)(911,-0.023428)(916,-0.039617)(921,-0.048146)(926,-0.029952)(931,-0.019395)(936,-0.020475)(941,-0.017799)(946,-0.026479)(951,-0.024467)(956,-0.023342)(961,-0.017308)(966,-0.048216)(971,-0.034403)(976,-0.012449)(981,-0.027277)(986,-0.023888)(991,-0.050781)(996,-0.035230)(1001,-0.025994)(1006,-0.020496)(1011,-0.022865)(1016,-0.035093)(1021,-0.018410)
      };
    \end{axis}
  \end{tikzpicture}
  &
  \begin{tikzpicture}[font=\footnotesize]
    \renewcommand{\axisdefaulttryminticks}{4} 
    \tikzstyle{every major grid}+=[style=densely dashed] 
    \tikzstyle{every axis y label}+=[yshift=-10pt] 
    \tikzstyle{every axis x label}+=[yshift=5pt]
    \tikzstyle{every axis legend}+=[cells={anchor=west},fill=white,
        at={(0.02,0.98)}, anchor=north west, font=\scriptsize ]
    \begin{axis}[
      %ybar,
      width=.42\linewidth,
      height=.25\linewidth,
      xmin=1,
      ymin=-.1,
      xmax=1024,
      ymax=.1,
      grid=none,
      axis y line=left,
      axis x line=center,
      xtick=\empty,
      ytick=\empty,
      grid=major,
      ymajorgrids=false,
      scaled ticks=true,
      %scale ticks above={4},
      ]
      \addplot[green!60!black,smooth,line width=0.5pt] coordinates{
	      (1,-0.002222)(6,0.008039)(11,-0.002379)(16,0.002475)(21,0.003087)(26,0.001529)(31,-0.004506)(36,-0.007458)(41,-0.003066)(46,-0.002261)(51,-0.006692)(56,-0.004378)(61,-0.004327)(66,-0.001608)(71,-0.005438)(76,-0.005145)(81,-0.001296)(86,-0.001741)(91,-0.002866)(96,-0.003431)(101,-0.001799)(106,-0.000267)(111,-0.001402)(116,-0.005966)(121,-0.009411)(126,0.002957)(131,-0.008061)(136,-0.004520)(141,0.001500)(146,-0.001696)(151,-0.001562)(156,-0.003302)(161,0.000073)(166,-0.002800)(171,-0.004041)(176,0.007406)(181,-0.005698)(186,0.005572)(191,0.004594)(196,0.010897)(201,-0.003824)(206,-0.002347)(211,-0.003665)(216,-0.014802)(221,-0.016161)(226,-0.003660)(231,-0.002168)(236,-0.003763)(241,-0.008169)(246,-0.007746)(251,-0.006027)(256,-0.006614)(261,-0.001610)(266,0.002252)(271,-0.001830)(276,0.010138)(281,-0.002067)(286,-0.001756)(291,-0.001088)(296,-0.000549)(301,-0.005450)(306,-0.003246)(311,-0.002984)(316,-0.007095)(321,-0.005747)(326,-0.004256)(331,-0.002759)(336,0.005863)(341,-0.003676)(346,-0.002364)(351,0.005985)(356,0.000744)(361,-0.008944)(366,0.000149)(371,0.000812)(376,-0.002364)(381,-0.004136)(386,0.008597)(391,-0.000530)(396,-0.005605)(401,0.006028)(406,0.000230)(411,0.017295)(416,-0.004406)(421,0.006920)(426,0.018242)(431,0.003137)(436,-0.002111)(441,-0.001112)(446,0.001129)(451,-0.001817)(456,-0.004038)(461,-0.003030)(466,-0.002631)(471,0.000390)(476,-0.002275)(481,-0.003555)(486,0.022437)(491,-0.002449)(496,0.002046)(501,-0.001887)(506,0.003778)(511,0.003578)(516,0.005943)(521,-0.014724)(526,-0.013373)(531,-0.002963)(536,-0.009792)(541,-0.008550)(546,-0.007653)(551,0.016970)(556,0.016948)(561,-0.006876)(566,0.006906)(571,-0.013491)(576,0.004654)(581,-0.004177)(586,0.003965)(591,-0.011391)(596,-0.002027)(601,-0.002998)(606,-0.017602)(611,-0.000836)(616,-0.014160)(621,0.004208)(626,0.003635)(631,-0.005663)(636,0.920222)(641,0.025544)(646,-0.011057)(651,0.000311)(656,0.002344)(661,-0.002198)(666,-0.016623)(671,-0.006689)(676,-0.007252)(681,-0.013887)(686,-0.006341)(691,0.016698)(696,-0.009637)(701,-0.006028)(706,0.000892)(711,-0.012037)(716,-0.003580)(721,-0.004444)(726,-0.008397)(731,0.007123)(736,0.001264)(741,-0.004185)(746,-0.006505)(751,-0.002687)(756,-0.006487)(761,0.058782)(766,-0.006229)(771,0.026874)(776,-0.006482)(781,-0.024990)(786,0.012463)(791,0.037058)(796,-0.003422)(801,0.003708)(806,-0.044089)(811,-0.004826)(816,-0.019090)(821,0.008606)(826,0.034162)(831,-0.004693)(836,0.041857)(841,-0.006870)(846,-0.005756)(851,-0.015806)(856,-0.006591)(861,-0.006038)(866,0.012758)(871,-0.002360)(876,0.008530)(881,-0.005579)(886,-0.006225)(891,0.001958)(896,-0.005784)(901,0.000853)(906,-0.001740)(911,-0.001767)(916,-0.004184)(921,-0.015699)(926,-0.013084)(931,-0.005341)(936,-0.004668)(941,-0.003867)(946,-0.003596)(951,-0.008894)(956,-0.006610)(961,0.141784)(966,0.002471)(971,0.019587)(976,-0.013789)(981,-0.012552)(986,0.004599)(991,-0.009370)(996,-0.005911)(1001,-0.003098)(1006,-0.004689)(1011,-0.007554)(1016,-0.008100)(1021,-0.010510)
      };
    \end{axis}
  \end{tikzpicture}
  &
  \begin{tikzpicture}[font=\footnotesize]
    \renewcommand{\axisdefaulttryminticks}{4} 
    \tikzstyle{every major grid}+=[style=densely dashed] 
    \tikzstyle{every axis y label}+=[yshift=-10pt] 
    \tikzstyle{every axis x label}+=[yshift=5pt]
    \tikzstyle{every axis legend}+=[cells={anchor=west},fill=white,
        at={(0.02,0.98)}, anchor=north west, font=\scriptsize ]
    \begin{axis}[
      %ybar,
      width=.42\linewidth,
      height=.25\linewidth,
      xmin=1,
      xmax=1024,
      grid=none,
      axis y line=left,
      axis x line=center,
      xtick=\empty,
      ytick=\empty,
      grid=major,
      ymajorgrids=false,
      scaled ticks=true,
      %scale ticks above={4},
      ]
      \addplot[red,smooth,line width=0.5pt] coordinates{
	      (1,-0.019164)(6,-0.016625)(11,-0.017413)(16,-0.033070)(21,-0.020911)(26,-0.038029)(31,-0.023589)(36,-0.024252)(41,-0.036864)(46,-0.038648)(51,-0.033177)(56,-0.020684)(61,-0.023300)(66,-0.023510)(71,-0.034564)(76,-0.042085)(81,-0.022745)(86,-0.033179)(91,-0.023489)(96,-0.027219)(101,-0.022440)(106,-0.032243)(111,-0.038437)(116,-0.042175)(121,-0.014122)(126,-0.028242)(131,-0.027692)(136,-0.026181)(141,-0.037487)(146,-0.032897)(151,-0.019717)(156,-0.043877)(161,-0.031780)(166,-0.020298)(171,-0.036722)(176,-0.032127)(181,-0.021670)(186,-0.025151)(191,-0.019444)(196,-0.031864)(201,-0.030182)(206,-0.041608)(211,-0.023201)(216,-0.054346)(221,-0.020261)(226,-0.016688)(231,-0.040375)(236,-0.030418)(241,-0.030563)(246,-0.044557)(251,-0.034791)(256,-0.031335)(261,-0.040709)(266,-0.033127)(271,-0.032524)(276,-0.024779)(281,-0.035941)(286,-0.019543)(291,-0.025308)(296,-0.017016)(301,-0.028784)(306,-0.032956)(311,-0.023236)(316,-0.028368)(321,-0.032170)(326,-0.041128)(331,-0.019694)(336,-0.035721)(341,-0.048011)(346,-0.027781)(351,-0.038892)(356,-0.022872)(361,-0.022350)(366,-0.050208)(371,-0.031029)(376,-0.020497)(381,-0.026153)(386,-0.034847)(391,-0.030319)(396,-0.023796)(401,-0.022284)(406,-0.033955)(411,-0.034820)(416,-0.040370)(421,-0.039889)(426,-0.024294)(431,-0.030363)(436,-0.029446)(441,-0.031771)(446,-0.029048)(451,-0.029131)(456,-0.036958)(461,-0.031307)(466,-0.035026)(471,-0.028021)(476,-0.023157)(481,-0.024548)(486,-0.029800)(491,-0.026241)(496,-0.023603)(501,-0.032588)(506,-0.033287)(511,-0.037645)(516,0.028362)(521,0.030497)(526,0.026791)(531,0.027770)(536,0.028571)(541,0.041319)(546,0.026173)(551,0.017658)(556,0.027793)(561,0.020302)(566,0.022524)(571,0.022277)(576,0.029495)(581,0.024903)(586,0.033915)(591,0.030554)(596,0.032602)(601,0.038410)(606,0.024458)(611,0.020677)(616,0.019023)(621,0.034444)(626,0.020406)(631,0.027553)(636,0.030098)(641,0.023139)(646,0.031413)(651,0.028313)(656,0.031140)(661,0.018041)(666,0.022037)(671,0.030555)(676,0.035096)(681,0.032984)(686,0.032374)(691,0.036091)(696,0.028177)(701,0.017210)(706,0.028554)(711,0.053341)(716,0.037205)(721,0.040134)(726,0.017363)(731,0.028226)(736,0.036485)(741,0.034755)(746,0.025141)(751,0.024446)(756,0.031017)(761,0.032387)(766,0.030148)(771,0.031970)(776,0.029548)(781,0.030411)(786,0.037248)(791,0.041727)(796,0.043232)(801,0.025898)(806,0.024026)(811,0.025608)(816,0.028083)(821,0.019113)(826,0.021610)(831,0.017291)(836,0.026999)(841,0.033023)(846,0.024307)(851,0.013531)(856,0.037831)(861,0.021997)(866,0.031194)(871,0.025378)(876,0.030241)(881,0.040006)(886,0.039109)(891,0.025078)(896,0.037648)(901,0.041248)(906,0.040175)(911,0.024992)(916,0.034177)(921,0.033626)(926,0.023517)(931,0.025738)(936,0.041721)(941,0.035021)(946,0.026035)(951,0.027241)(956,0.025679)(961,0.028485)(966,0.027152)(971,0.041668)(976,0.017911)(981,0.024498)(986,0.032934)(991,0.030955)(996,0.028231)(1001,0.027248)(1006,0.025507)(1011,0.031183)(1016,0.027985)(1021,0.035991)
      };
    \end{axis}
  \end{tikzpicture}
\\
%2$^{\rm nd}$ eigenvector of $\Phi$: $W_{ij}\sim \mathcal N(0,1)$ & 2$^{\rm nd}$ eigenvector of $\Phi$: $W_{ij}\sim {\rm Bern}$
$W_{ij}\sim \mathcal N(0,1)$ & $W_{ij}\sim {\rm Bern}$ & $W_{ij}\sim {\rm Stud}$
\end{tabular}
\caption{(Top) Neural network performance for $\sigma(t)=-\frac12t^2+1$, with different $W_{ij}$, for a $2$-class Gaussian mixture model (see details in text), $n=512$, $T=\hat T=1024$, $p=256$. (Bottom) Spectra and second eigenvector of $\Phi$ for different $W_{ij}$ (first eigenvalues are of order $n$ and not shown; associated eigenvectors are provably non informative).}
  \label{fig:poly2}
\end{figure}

\subsection{Limiting cases}
\label{sec:limiting_cases}

We have suggested that $\Phi$ contains, in its dominant eigenmodes, all the usable information describing $X$. In the Gaussian mixture example above, it was notably shown that $\Phi$ may completely fail to contain this information, resulting in the impossibility to perform a classification task, even if one were to take infinitely many neurons in the network. For $\Phi$ containing useful information about $X$, it is intuitive to expect that both $\inf_\gamma\bar E_{\rm train}$ and $\inf_\gamma\bar E_{\rm test}$ become smaller as $n/T$ and $n/p$ become large. It is in fact easy to see that, if $\Phi$ is invertible (which is likely to occur in most cases if $\liminf_n T/p>1$), then
\begin{align*}
	&\lim_{n\to\infty} \bar E_{\rm train} = 0 \\
	&\textcolor{black}{\lim_{n\to\infty} \bar E_{\rm test} - \frac1{\hat T} \left\| \hat Y^\trans - \Phi_{\hat XX} \Phi^{-1} Y^\trans \right\|^2_F  = 0}
\end{align*}
and we fall back on the performance of a classical kernel regression. It is interesting in particular to note that, as the number of neurons $n$ becomes large, the effect of $\gamma$ \textcolor{black}{on $E_{\rm test}$} flattens out. Therefore, a smart choice of $\gamma$ is only relevant for small (and thus computationally more efficient) neuron layers. This observation is depicted in Figure~\ref{fig:perf_limit} where it is made clear that a growth of $n$ reduces $E_{\rm train}$ to zero while $E_{\rm test}$ saturates to a non-zero limit which becomes increasingly irrespective of $\gamma$. Note additionally the interesting phenomenon occurring for $n\leq T$ where too small values of $\gamma$ induce important performance losses, thereby suggesting a strong importance of proper choices of $\gamma$ in this regime.

\begin{figure}[h!]
  \centering
  \begin{tikzpicture}[font=\footnotesize]
    \renewcommand{\axisdefaulttryminticks}{4} 
    \tikzstyle{every major grid}+=[style=densely dashed] 
    \tikzstyle{every axis y label}+=[yshift=-10pt] 
    \tikzstyle{every axis x label}+=[yshift=5pt]
    \tikzstyle{every axis legend}+=[cells={anchor=west},fill=white,
        at={(0.02,0.98)}, anchor=north west, font=\scriptsize ]
    \begin{loglogaxis}[
      %ybar,
      width=.8\linewidth,
      xmin=1e-4,
      ymin=5e-2,
      xmax=1e2,
      ymax=1,
      bar width=1.5pt,
      grid=major,
      ymajorgrids=false,
      scaled ticks=true,
      %scale ticks above={4},
      xlabel={$\gamma$},
      ylabel={MSE}
      ]
      \addplot[black,smooth,line width=0.5pt] coordinates{
              (100,100)
      };
      \addplot[black,densely dashed,smooth,line width=0.5pt] coordinates{
	      (100,100)
      };
      \addplot[black,only marks,mark=o,line width=0.5pt] coordinates{
              (100,100)
      };
      \addplot[black,only marks,mark=x,line width=0.5pt] coordinates{
	      (100,100)
      };
      %%% sigma(t)=ReLU
      %%% 256
      \addplot[blue!50!white,smooth,line width=0.5pt] coordinates{
	      (0.000100,0.129021)(0.000178,0.129022)(0.000316,0.129024)(0.000562,0.129031)(0.001000,0.129051)(0.001778,0.129113)(0.003162,0.129289)(0.005623,0.129763)(0.010000,0.130925)(0.017783,0.133471)(0.031623,0.138367)(0.056234,0.146647)(0.100000,0.159151)(0.177828,0.176374)(0.316228,0.198592)(0.562341,0.226512)(1.000000,0.262677)(1.778279,0.312930)(3.162278,0.385409)(5.623413,0.484216)(10.000000,0.601201)(17.782794,0.717102)(31.622777,0.813921)(56.234133,0.884416)(100.000000,0.930951)
      };
      \addplot[blue!50!white,densely dashed,smooth,line width=0.5pt] coordinates{
(0.000100,0.317063)(0.000178,0.316774)(0.000316,0.316266)(0.000562,0.315382)(0.001000,0.313866)(0.001778,0.311335)(0.003162,0.307291)(0.005623,0.301265)(0.010000,0.293167)(0.017783,0.283740)(0.031623,0.274696)(0.056234,0.268231)(0.100000,0.266280)(0.177828,0.270109)(0.316228,0.280412)(0.562341,0.297916)(1.000000,0.324689)(1.778279,0.365738)(3.162278,0.428526)(5.623413,0.517252)(10.000000,0.624694)(17.782794,0.732659)(31.622777,0.823648)(56.234133,0.890254)(100.000000,0.934363)
      };
      \addplot[blue!50!white,only marks,mark=o,line width=0.5pt] coordinates{
(0.000100,0.129704)(0.000178,0.129705)(0.000316,0.129707)(0.000562,0.129715)(0.001000,0.129737)(0.001778,0.129802)(0.003162,0.129989)(0.005623,0.130488)(0.010000,0.131705)(0.017783,0.134346)(0.031623,0.139376)(0.056234,0.147807)(0.100000,0.160452)(0.177828,0.177807)(0.316228,0.200180)(0.562341,0.228323)(1.000000,0.264802)(1.778279,0.315451)(3.162278,0.388331)(5.623413,0.487345)(10.000000,0.604136)(17.782794,0.719478)(31.622777,0.815617)(56.234133,0.885521)(100.000000,0.931629)
      };
      \addplot[blue!50!white,only marks,mark=x,line width=0.5pt] coordinates{
(0.000100,0.320014)(0.000178,0.319711)(0.000316,0.319177)(0.000562,0.318250)(0.001000,0.316660)(0.001778,0.314012)(0.003162,0.309793)(0.005623,0.303533)(0.010000,0.295177)(0.017783,0.285540)(0.031623,0.276415)(0.056234,0.270033)(0.100000,0.268314)(0.177828,0.272496)(0.316228,0.283246)(0.562341,0.301268)(1.000000,0.328600)(1.778279,0.370195)(3.162278,0.433390)(5.623413,0.522147)(10.000000,0.629068)(17.782794,0.736088)(31.622777,0.826049)(56.234133,0.891803)(100.000000,0.935310)
      };
      %%% 512
      \addplot[blue!75!white,smooth,line width=0.5pt] coordinates{
              (0.000100,0.060032)(0.000178,0.060037)(0.000316,0.060051)(0.000562,0.060092)(0.001000,0.060210)(0.001778,0.060520)(0.003162,0.061264)(0.005623,0.062850)(0.010000,0.065838)(0.017783,0.070852)(0.031623,0.078496)(0.056234,0.089318)(0.100000,0.103786)(0.177828,0.122225)(0.316228,0.144733)(0.562341,0.171222)(1.000000,0.201947)(1.778279,0.238929)(3.162278,0.287718)(5.623413,0.357078)(10.000000,0.453269)(17.782794,0.570838)(31.622777,0.691251)(56.234133,0.794663)(100.000000,0.871479)	      
      };
      \addplot[blue!75!white,densely dashed,smooth,line width=0.5pt] coordinates{
	      (0.000100,0.369989)(0.000178,0.368107)(0.000316,0.364890)(0.000562,0.359540)(0.001000,0.351041)(0.001778,0.338439)(0.003162,0.321477)(0.005623,0.301251)(0.010000,0.280183)(0.017783,0.261069)(0.031623,0.246054)(0.056234,0.236308)(0.100000,0.232295)(0.177828,0.234146)(0.316228,0.241874)(0.562341,0.255459)(1.000000,0.275160)(1.778279,0.302571)(3.162278,0.342330)(5.623413,0.402283)(10.000000,0.488500)(17.782794,0.596315)(31.622777,0.708353)(56.234133,0.805456)(100.000000,0.877995)
      };
      \addplot[blue!75!white,only marks,mark=o,line width=0.5pt] coordinates{
              (0.000100,0.060169)(0.000178,0.059654)(0.000316,0.059800)(0.000562,0.060305)(0.001000,0.060212)(0.001778,0.060921)(0.003162,0.061980)(0.005623,0.063356)(0.010000,0.065644)(0.017783,0.070753)(0.031623,0.079214)(0.056234,0.090096)(0.100000,0.105253)(0.177828,0.122725)(0.316228,0.145592)(0.562341,0.171732)(1.000000,0.202450)(1.778279,0.240647)(3.162278,0.288083)(5.623413,0.357426)(10.000000,0.454285)(17.782794,0.570757)(31.622777,0.691082)(56.234133,0.793788)(100.000000,0.872311)
      };
      \addplot[blue!75!white,only marks,mark=x,line width=0.5pt] coordinates{
	      (0.000100,0.371541)(0.000178,0.371174)(0.000316,0.364297)(0.000562,0.359050)(0.001000,0.349780)(0.001778,0.338103)(0.003162,0.321833)(0.005623,0.299803)(0.010000,0.278571)(0.017783,0.259929)(0.031623,0.245208)(0.056234,0.238587)(0.100000,0.232204)(0.177828,0.235324)(0.316228,0.242129)(0.562341,0.255795)(1.000000,0.275866)(1.778279,0.304368)(3.162278,0.342947)(5.623413,0.401261)(10.000000,0.489728)(17.782794,0.596495)(31.622777,0.708384)(56.234133,0.805123)(100.000000,0.879209)
      };
      %%% 1024
      \addplot[blue,smooth,line width=0.5pt] coordinates{
	      (0.000100,0.001990)(0.000178,0.002654)(0.000316,0.003542)(0.000562,0.004727)(0.001000,0.006310)(0.001778,0.008427)(0.003162,0.011256)(0.005623,0.015031)(0.010000,0.020050)(0.017783,0.026678)(0.031623,0.035346)(0.056234,0.046527)(0.100000,0.060712)(0.177828,0.078345)(0.316228,0.099725)(0.562341,0.124880)(1.000000,0.153557)(1.778279,0.185677)(3.162278,0.222680)(5.623413,0.269489)(10.000000,0.334874)(17.782794,0.426743)(31.622777,0.542609)(56.234133,0.665616)(100.000000,0.774656)
      };
      \addplot[blue,densely dashed,smooth,line width=0.5pt] coordinates{
	      (0.000100,1.255024)(0.000178,0.980667)(0.000316,0.774793)(0.000562,0.620286)(0.001000,0.504342)(0.001778,0.417404)(0.003162,0.352365)(0.005623,0.303970)(0.010000,0.268369)(0.017783,0.242774)(0.031623,0.225200)(0.056234,0.214281)(0.100000,0.209138)(0.177828,0.209281)(0.316228,0.214517)(0.562341,0.224813)(1.000000,0.240188)(1.778279,0.260839)(3.162278,0.288030)(5.623413,0.325857)(10.000000,0.382056)(17.782794,0.464091)(31.622777,0.570071)(56.234133,0.684321)(100.000000,0.786589)
      };
      \addplot[blue,only marks,mark=o,line width=0.5pt] coordinates{
	      (0.000100,0.002038)(0.000178,0.002722)(0.000316,0.003637)(0.000562,0.004859)(0.001000,0.006490)(0.001778,0.008665)(0.003162,0.011563)(0.005623,0.015409)(0.010000,0.020495)(0.017783,0.027185)(0.031623,0.035902)(0.056234,0.047114)(0.100000,0.061299)(0.177828,0.078900)(0.316228,0.100217)(0.562341,0.125276)(1.000000,0.153827)(1.778279,0.185804)(3.162278,0.222694)(5.623413,0.269465)(10.000000,0.334900)(17.782794,0.426862)(31.622777,0.542794)(56.234133,0.665804)(100.000000,0.774806)
      };
      \addplot[blue,only marks,mark=x,line width=0.5pt] coordinates{
	      (0.000100,1.268876)(0.000178,0.991490)(0.000316,0.783034)(0.000562,0.626562)(0.001000,0.509153)(0.001778,0.421168)(0.003162,0.355485)(0.005623,0.306754)(0.010000,0.270955)(0.017783,0.245174)(0.031623,0.227396)(0.056234,0.216271)(0.100000,0.210927)(0.177828,0.210876)(0.316228,0.215926)(0.562341,0.226057)(1.000000,0.241288)(1.778279,0.261803)(3.162278,0.288859)(5.623413,0.326544)(10.000000,0.382596)(17.782794,0.464484)(31.622777,0.570327)(56.234133,0.684466)(100.000000,0.786659)
      };
      %%% 2048
      \addplot[blue!75!black,smooth,line width=0.5pt] coordinates{
	      (0.000100,0.000001)(0.000178,0.000004)(0.000316,0.000011)(0.000562,0.000033)(0.001000,0.000095)(0.001778,0.000265)(0.003162,0.000687)(0.005623,0.001630)(0.010000,0.003503)(0.017783,0.006820)(0.031623,0.012138)(0.056234,0.019998)(0.100000,0.030897)(0.177828,0.045262)(0.316228,0.063433)(0.562341,0.085585)(1.000000,0.111624)(1.778279,0.141146)(3.162278,0.173788)(5.623413,0.210429)(10.000000,0.255202)(17.782794,0.316461)(31.622777,0.403210)(56.234133,0.515863)(100.000000,0.639942)
      };
      \addplot[blue!75!black,densely dashed,smooth,line width=0.5pt] coordinates{
	      (0.000100,0.283191)(0.000178,0.282217)(0.000316,0.280540)(0.000562,0.277723)(0.001000,0.273168)(0.001778,0.266232)(0.003162,0.256533)(0.005623,0.244379)(0.010000,0.230936)(0.017783,0.217862)(0.031623,0.206679)(0.056234,0.198421)(0.100000,0.193645)(0.177828,0.192634)(0.316228,0.195580)(0.562341,0.202693)(1.000000,0.214191)(1.778279,0.230239)(3.162278,0.251042)(5.623413,0.277581)(10.000000,0.313327)(17.782794,0.365559)(31.622777,0.442628)(56.234133,0.545325)(100.000000,0.660318)
      };
      \addplot[blue!75!black,only marks,mark=o,line width=0.5pt] coordinates{
	      (0.000100,0.000001)(0.000178,0.000004)(0.000316,0.000011)(0.000562,0.000033)(0.001000,0.000098)(0.001778,0.000271)(0.003162,0.000700)(0.005623,0.001656)(0.010000,0.003547)(0.017783,0.006885)(0.031623,0.012221)(0.056234,0.020096)(0.100000,0.031012)(0.177828,0.045406)(0.316228,0.063616)(0.562341,0.085799)(1.000000,0.111827)(1.778279,0.141291)(3.162278,0.173862)(5.623413,0.210479)(10.000000,0.255318)(17.782794,0.316741)(31.622777,0.403715)(56.234133,0.516543)(100.000000,0.640650)
      };
      \addplot[blue!75!black,only marks,mark=x,line width=0.5pt] coordinates{
	      (0.000100,0.283244)(0.000178,0.282267)(0.000316,0.280587)(0.000562,0.277767)(0.001000,0.273215)(0.001778,0.266300)(0.003162,0.256660)(0.005623,0.244621)(0.010000,0.231339)(0.017783,0.218425)(0.031623,0.207337)(0.056234,0.199071)(0.100000,0.194197)(0.177828,0.193049)(0.316228,0.195877)(0.562341,0.202904)(1.000000,0.214328)(1.778279,0.230298)(3.162278,0.251041)(5.623413,0.277586)(10.000000,0.313437)(17.782794,0.365869)(31.622777,0.443175)(56.234133,0.546039)(100.000000,0.661044)
      };
      %%% 4096
      \addplot[blue!50!black,smooth,line width=0.5pt] coordinates{
	      (0.000100,0.000000)(0.000178,0.000000)(0.000316,0.000001)(0.000562,0.000003)(0.001000,0.000008)(0.001778,0.000025)(0.003162,0.000074)(0.005623,0.000215)(0.010000,0.000586)(0.017783,0.001484)(0.031623,0.003423)(0.056234,0.007127)(0.100000,0.013407)(0.177828,0.022996)(0.316228,0.036435)(0.562341,0.054050)(1.000000,0.075949)(1.778279,0.101954)(3.162278,0.131569)(5.623413,0.164234)(10.000000,0.200349)(17.782794,0.243226)(31.622777,0.300533)(56.234133,0.381871)(100.000000,0.490286)
      };
      \addplot[blue!50!black,densely dashed,smooth,line width=0.5pt] coordinates{
(0.000100,0.207330)(0.000178,0.207249)(0.000316,0.207105)(0.000562,0.206852)(0.001000,0.206413)(0.001778,0.205664)(0.003162,0.204423)(0.005623,0.202458)(0.010000,0.199567)(0.017783,0.195726)(0.031623,0.191267)(0.056234,0.186908)(0.100000,0.183564)(0.177828,0.182108)(0.316228,0.183241)(0.562341,0.187505)(1.000000,0.195352)(1.778279,0.207161)(3.162278,0.223204)(5.623413,0.243700)(10.000000,0.269389)(17.782794,0.303120)(31.622777,0.351506)(56.234133,0.423326)(100.000000,0.521769)

      };
      \addplot[blue!50!black,only marks,mark=o,line width=0.5pt] coordinates{
(0.000100,0.000000)(0.000178,0.000000)(0.000316,0.000001)(0.000562,0.000003)(0.001000,0.000008)(0.001778,0.000025)(0.003162,0.000075)(0.005623,0.000217)(0.010000,0.000593)(0.017783,0.001499)(0.031623,0.003453)(0.056234,0.007179)(0.100000,0.013486)(0.177828,0.023104)(0.316228,0.036576)(0.562341,0.054233)(1.000000,0.076192)(1.778279,0.102268)(3.162278,0.131944)(5.623413,0.164641)(10.000000,0.200777)(17.782794,0.243704)(31.622777,0.301126)(56.234133,0.382637)(100.000000,0.491190)
      };
      \addplot[blue!50!black,only marks,mark=x,line width=0.5pt] coordinates{
(0.000100,0.208988)(0.000178,0.208905)(0.000316,0.208757)(0.000562,0.208498)(0.001000,0.208049)(0.001778,0.207282)(0.003162,0.206009)(0.005623,0.203996)(0.010000,0.201034)(0.017783,0.197098)(0.031623,0.192529)(0.056234,0.188061)(0.100000,0.184630)(0.177828,0.183122)(0.316228,0.184239)(0.562341,0.188513)(1.000000,0.196373)(1.778279,0.208177)(3.162278,0.224184)(5.623413,0.244618)(10.000000,0.270244)(17.782794,0.303941)(31.622777,0.352352)(56.234133,0.424251)(100.000000,0.522755)
      };

      %%% infinity
      \addplot[black,line width=1pt] coordinates {(1e-4,0.1689)(1e2,0.1689)};
      \draw[->,>=stealth,semithick] (axis cs:.01,.15) parabola bend (axis cs:1,0.08) (axis cs: 10,0.1);
      \node[font=\footnotesize] at (axis cs:10,.07) {$n=256\to 4\,096$};
      \legend{ { $\bar E_{\rm train}$ }, { $\bar E_{\rm test}$ }, { $E_{\rm train}$ }, { $E_{\rm test}$ } }
    \end{loglogaxis}
  \end{tikzpicture}
  \caption{Neural network performance for growing $n$ ($256$, $512$, $1\,024$, $2\,048$, $4\,096$) as a function of $\gamma$, $\sigma(t)=\max(t,0)$; 2-class MNIST data (sevens, nines), $T=\hat T=1024$, $p=784$. Limiting ($n=\infty$) $\bar E_{\rm test}$ shown in thick black line.}
  \label{fig:perf_limit}
\end{figure}
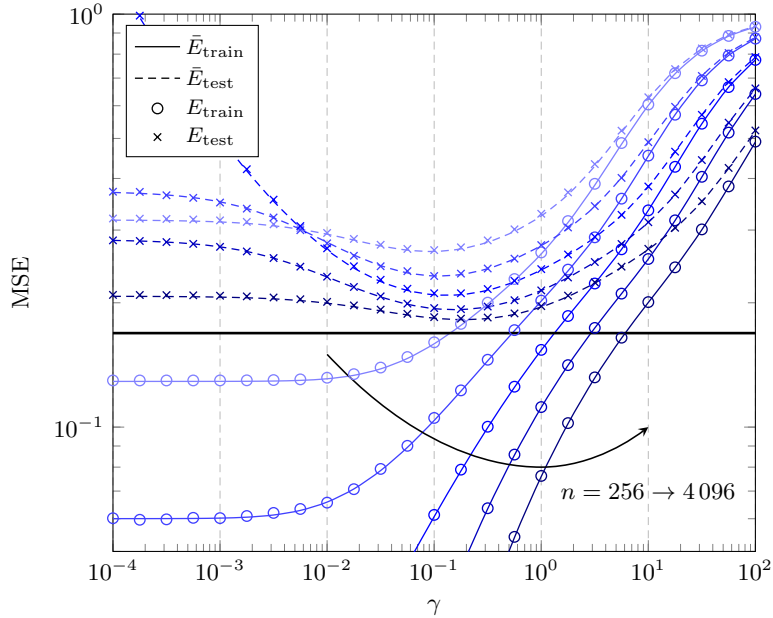

\bigskip

Of course, practical interest lies precisely in situations where $n$ is not too large. We may thus subsequently assume that $\limsup_n n/T<1$. In this case, as suggested by Figures~\ref{fig:perf}--\ref{fig:perf2}, the mean-square error performances achieved as $\gamma\to 0$ may predict the superiority of specific choices of $\sigma(\cdot)$ for optimally chosen $\gamma$. It is important for this study to differentiate between cases where $r\equiv {\rm rank}(\Phi)$ is smaller or greater than $n$. Indeed, observe that, with the spectral decomposition $\Phi=U_r\Lambda_r U_r^\trans$ for $\Lambda_r\in\RR^{r\times r}$ diagonal and $U_r\in\RR^{T\times r}$,
\begin{align*}
	\delta &= \frac1T\tr \Phi\left( \frac{n}T \frac{\Phi}{1+\delta} + \gamma I_T \right)^{-1} = \frac1T \tr \Lambda_r \left( \frac{n}T \frac{\Lambda_r}{1+\delta} + \gamma I_r \right)^{-1}
\end{align*}
which satisfies, as $\gamma\to 0$,
\begin{align*}
	\left\{
	\begin{array}{rll}
		\delta &\to \frac{r}{n-r} &,~r<n \\
		\gamma \delta &\to \Delta = \frac1T\tr \Phi \left( \frac{n}T \frac{\Phi}{\Delta} + I_T \right)^{-1} &,~r\geq n.
	\end{array}
	\right.
\end{align*}
A phase transition therefore exists whereby $\delta$ assumes a finite positive value in the small $\gamma$ limit if $r/n<1$, or scales like $1/\gamma$ otherwise. 

As a consequence, if $r<n$, as $\gamma\to 0$, $\Psi\to \frac{n}T(1-\frac{r}n)\Phi$ and $\bar Q\sim \frac{T}{n-r} U_r \Lambda_r^{-1} U_r^\trans+\frac1\gamma V_rV_r^\trans$, where $V_r\in\RR^{T\times (n-r)}$ is any matrix such that $[U_r~V_r]$ is orthogonal, so that $\Psi\bar Q\to U_rU_r^\trans$ and $\Psi\bar Q^2\to U_r\Lambda_r^{-1}U_r^\trans$; and thus, $\bar{E}_{\rm train}\to \frac1T\tr YV_rV_r^\trans Y^\trans=\frac1T\|YV_r\|^2_F$, which states that the residual training error corresponds to the energy of $Y$ not captured by the space spanned by $\Phi$. Since $E_{\rm train}$ is an increasing function of $\gamma$, so is $\bar E_{\rm train}$ (at least for all large $n$) and thus $\frac1T\|YV_r\|^2_F$ corresponds to the lowest achievable asymptotic training error. 

If instead $r>n$ (which is the most likely outcome in practice), as $\gamma\to 0$, $\bar Q\sim \frac1\gamma(\frac{n}T\frac{\Phi}{\Delta}+I_T)^{-1}$ and thus 
\begin{align*}
	\bar E_{\rm train} \overset{\gamma\to 0}{\longrightarrow} \frac1T \tr YQ_\Delta \left[ \frac{\frac1n\tr \Psi_\Delta Q_\Delta^2}{1-\frac1n\tr (\Psi_\Delta Q_\Delta)^2}\Psi_\Delta + I_T\right] Q_\Delta Y^\trans
\end{align*}
where $\Psi_\Delta=\frac{n}T\frac{\Phi}{\Delta}$ and $Q_\Delta=(\frac{n}T\frac{\Phi}{\Delta}+I_T)^{-1}$.

These results suggest that neural networks should be designed both in a way that reduces the rank of $\Phi$ while maintaining a strong alignment between the dominant eigenvectors of $\Phi$ and the output matrix $Y$.

Interestingly, if $X$ is assumed as above to be extracted from a Gaussian mixture and that $Y\in\RR^{1\times T}$ is a classification vector with $Y_{1j}\in\{-1,1\}$, then the tools proposed in \citep{COU16} (related to spike random matrix analysis) allow for an explicit evaluation of the aforementioned limits as $n,p,T$ grow large. This analysis is however cumbersome and outside the scope of the present work.

\section{Proof of the Main Results}
\label{sec:proofs}

In the remainder, we shall use extensively the following notations:
\begin{align*}
	\Sigma &= \sigma(WX) = \begin{bmatrix} \sigma_1^\trans \\ \vdots \\ \sigma_n^\trans \end{bmatrix},\quad W = \begin{bmatrix} w_1^\trans \\ \vdots \\ w_n^\trans \end{bmatrix} 
\end{align*}
i.e., $\sigma_i=\sigma(w_i^\trans X)^\trans$. %The generic notation $\sigma$ will often refer to a random vector distributed as any of the $\sigma_i$, and similarly the generic notation $w$ will stand for a $\mathcal N(0,I_T)$ vector. 
Also, we shall define $\Sigma_{-i}\in\RR^{(n-1)\times T}$ the matrix $\Sigma$ with $i$-th row removed, and correspondingly
\begin{align*}
	Q_{-i} &= \left( \frac1T\Sigma^\trans \Sigma - \frac1T \sigma_i\sigma_i^\trans + \gamma I_T \right)^{-1}.
\end{align*}
Finally, because of exchangeability, it shall often be convenient to work with the generic random vector $w\sim\mathcal N_\varphi(0,I_T)$, the random vector $\sigma$ distributed as any of the $\sigma_i$'s, the random matrix $\Sigma_-$ distributed as any of the $\Sigma_{-i}$'s, and with the random matrix $Q_-$ distributed as any of the $Q_{-i}$'s.

%\bigskip
%
%Recall that $\Phi=\EE[\sigma\sigma^\trans]=\frac{T}n \EE[\frac1T\Sigma^\trans \Sigma]$. If $\bar{\sigma}\equiv\EE[\sigma]\neq 0$, which typically arises if the function $\sigma(t)$ does not satisfy $\sigma(-t)=-\sigma(t)$, then it is appropriate to write $\Phi=\bar{\sigma}\bar{\sigma}^\trans + \overset{\circ}{\Phi}$, where $\overset{\circ}{\Phi}$ is typically a matrix of operator norm $O(1)$, while $\bar{\sigma}\bar{\sigma}^\trans$ has an operator norm typically of order $O(n)$. Both must therefore be controlled separately. It will be in particular convenient to write similarly $\Sigma=\bar{\sigma}1_T^\trans + \overset{\circ}{\Sigma}$.
%
%{\RED Do we really need these notations?}

\subsection{Concentration Results on $\Sigma$}

Our first results provide concentration of measure properties on functionals of $\Sigma$.
\textcolor{black}{These results unfold from the following concentration inequality for Lipschitz applications of a Gaussian vector; see e.g., \cite[Corollary~2.6, Propositions~1.3,~1.8]{LED05} or \cite[Theorem~2.1.12]{TAO12}. For $d \in \mathbb{N}$, consider $\mu$ the canonical Gaussian probability on $\mathbb{R}^d$ defined through its density $d\mu(w)=(2 \pi)^{-\frac{d}2}e^{- \frac12\|w\|^2}$ and $f : \mathbb{R}^d \rightarrow \mathbb{R}$ a $\lambda_f$-Lipschitz function. Then, we have the said {\it normal concentration}
	\begin{align}
		\label{eq:conc_lip}
		\mu\left( \left\{ \left| f - \int f d\mu \right| \geq t \right\} \right) &\leq  Ce^{-c\frac{t^2}{\lambda_f^2}}
	\end{align}
	where $C,c>0$ are independent of $d$ and $\lambda_f$. As a corollary (see e.g., \cite[Proposition~1.10]{LED05}), for every $k\geq 1$,
	\begin{align*}
		%\label{eq:conc_moment}
		\EE\left[ \left| f - \int f d\mu \right|^k \right] &\leq \left(\frac{C\lambda_f}{\sqrt{c}}\right)^k.
	\end{align*}
}

%	In addition, by \cite[Proposition~1.10]{LED05}, for all $p\geq 1$,
%	\begin{align}
%		\label{eq:concentration_moments}
%		\left(\int \left( f - \int f d\mu \right)^p d\mu\right)^{\frac1p} \leq K\sqrt{c^{-1}p} 
%	\end{align}
%	for some constant $K$ independent of $p,c$.

\bigskip

The main approach \textcolor{black}{to the proof of our results, starting with that of the key Lemma~\ref{lem:concentration_quadform},} is as follows: since $W_{ij}=\varphi(\tilde W_{ij})$ with $\tilde W_{ij}\sim\mathcal N(0,1)$ and $\varphi$ Lipschitz, the normal concentration of $\tilde W$ transfers to $W$ which further induces a normal concentration of the random vector $\sigma$ and the matrix $\Sigma$, thereby implying that Lipschitz functionals of $\sigma$ or $\Sigma$ also concentrate. \textcolor{black}{As pointed out earlier,} these concentration results are used in place for the independence assumptions (and their multiple consequences on convergence of random variables) classically exploited in random matrix theory.
	
\bigskip

{\it Notations:} In all subsequent lemmas and proofs, the letters $c,c_i,C,C_i>0$ will be used interchangeably as positive constants independent of the key equation parameters (notably $n$ and $t$ below) and may be reused from line to line. Additionally, the variable $\varepsilon>0$ will denote any small positive number; the variables $c,c_i,C,C_i$ may depend on $\varepsilon$.
% \begin{red}

\bigskip

We start \textcolor{black}{by recalling the first part of the statement of Lemma~\ref{lem:concentration_quadform} and subsequently providing its proof.} 

\begin{lemma}[Concentration of quadratic forms]
	\label{lem:concentration_quadform_1stpart}
	Let \textcolor{black}{Assumptions~\ref{ass:W}--\ref{ass:sigma}} hold. \textcolor{black}{Let also $A \in \RR^{T\times T}$ such that $\| A \| \leq 1$} and, for $X\in\RR^{p\times T}$ and $w\sim \mathcal N_\varphi(0,I_p)$, define the random vector $\sigma\equiv \sigma(w^\trans X)^\trans\in\RR^T$. Then, 
	\begin{align*}
		\textcolor{black}{P\left( \left| \frac1T\sigma^\trans A \sigma - \frac1T\tr \Phi A  \right| > t \right) } & \textcolor{black}{\leq C e^{-\frac{cT}{\|X \|^2 \lambda_{\varphi}^2\lambda_{\sigma}^2} \min\left(\frac{t^2}{t_0^2}, t\right)}}
	\end{align*}
	\textcolor{black}{for $t_0\equiv |\sigma(0)| + \lambda_\varphi\lambda_{\sigma} \|X \| \sqrt{\frac{p}T}$ and $C,c>0$ independent of all other parameters.}
\end{lemma}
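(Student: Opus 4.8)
The plan is to express the centred quadratic form as (a difference of) \emph{norms} of a Lipschitz image of a standard Gaussian vector, apply the normal concentration bound \eqref{eq:conc_lip}, and then pass from concentration of such a norm to concentration of its square. Two preliminary reductions are convenient. First, $\sigma^\trans A\sigma=\sigma^\trans\tfrac12(A+A^\trans)\sigma$, so I may assume $A$ symmetric. Second, writing $A=\tfrac12(A+I_T)-\tfrac12(I_T-A)$ rewrites $\tfrac1T\sigma^\trans A\sigma-\tfrac1T\tr\Phi A$ as a difference of two terms of the same form with $A$ replaced by the positive semi-definite matrices $\tfrac12(A\pm I_T)$, both of operator norm at most $1$; a union bound (and a renaming of constants) then reduces everything to the case $A\succeq0$, $\|A\|\le1$, which I assume from now on.

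\textbf{Lipschitz bookkeeping and concentration of the norm.} The map $g:\tilde w\in\RR^p\mapsto \sigma(X^\trans\varphi(\tilde w))\in\RR^T$ is $\lambda$-Lipschitz with $\lambda\equiv\lambda_\varphi\lambda_\sigma\|X\|$: applying $\varphi$ entry-wise is $\lambda_\varphi$-Lipschitz on $\RR^p$, the linear map $u\mapsto X^\trans u$ has operator norm $\|X\|$, and applying $\sigma$ entry-wise is $\lambda_\sigma$-Lipschitz on $\RR^T$. With $\tilde w\sim\mathcal N(0,I_p)$ and $v\equiv g(\tilde w)=\sigma(w^\trans X)^\trans$, set $F(\tilde w)\equiv T^{-1/2}\|A^{1/2}v\|$; since $\|A^{1/2}\|\le1$, $F$ is $(\lambda/\sqrt T)$-Lipschitz. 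Hence \eqref{eq:conc_lip} gives $P(|F-\EE F|\ge s)\le Ce^{-cTs^2/\lambda^2}$, and its moment corollary (with $k=2$) gives $\mathrm{Var}(F)\le C\lambda^2/T$. Note that $F^2=\tfrac1Tv^\trans Av$ and $\EE[F^2]=\tfrac1T\tr(A\Phi)$, so the target quantity is exactly $F^2-\EE[F^2]$.

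\textbf{A priori size estimate and the passage to the square.} I first bound $\EE F\le\sqrt{\EE[F^2]}=\sqrt{\tfrac1T\tr(A\Phi)}\le\sqrt{\tfrac1T\tr\Phi}$. Using $|\sigma(u)|\le|\sigma(0)|+\lambda_\sigma|u|$ and the fact that $w^\trans x_j$ is a $\lambda_\varphi\|x_j\|$-Lipschitz, centred (entries of $W$ being zero-mean) function of $\tilde w$, one gets $\EE[(w^\trans x_j)^2]=\mathrm{Var}(w^\trans x_j)\le C\lambda_\varphi^2\|x_j\|^2$, whence $\tr\Phi=\sum_j\EE[\sigma(w^\trans x_j)^2]\le 2T\sigma(0)^2+2C\lambda_\sigma^2\lambda_\varphi^2\|X\|_F^2$, and using the crude bound $\|X\|_F^2\le p\|X\|^2$ this yields $\tfrac1T\tr\Phi\le C'\bigl(|\sigma(0)|^2+\lambda_\sigma^2\lambda_\varphi^2\|X\|^2 p/T\bigr)\le C't_0^2$; in particular $\EE F\le C't_0$, and also $\lambda^2/T\le t_0^2$. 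Now write $Z\equiv|F-\EE F|$ and use the elementary estimate $|F^2-\EE[F^2]|\le|F^2-(\EE F)^2|+\mathrm{Var}(F)\le Z^2+2(\EE F)Z+\mathrm{Var}(F)\le Z^2+C't_0 Z+C\lambda^2/T$. For $t\gtrsim\lambda^2/T$ this forces either $Z\gtrsim\sqrt t$ or $Z\gtrsim t/t_0$, so the two tail bounds on $Z$ give $P(|F^2-\EE[F^2]|>t)\le Ce^{-c'T\min(t,\,t^2/t_0^2)/\lambda^2}$; for $t\lesssim\lambda^2/T$ (hence $t\lesssim t_0^2$) the claimed right-hand side is bounded below by a positive constant, so the inequality is trivial after enlarging $C$. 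Recalling $\lambda^2=\|X\|^2\lambda_\varphi^2\lambda_\sigma^2$ and undoing the reduction to $A\succeq0$ (a harmless factor $2$ in $t$ and a union bound) yields the stated inequality.

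\textbf{Main obstacle.} The delicate point is the last passage $F^2-\EE[F^2]\approx 2(\EE F)(F-\EE F)+(F-\EE F)^2$: the linear term produces the sub-Gaussian tail at the scale $t_0^2\lambda^2/T$ and the quadratic term the sub-exponential tail, so the two tail estimates for $Z=|F-\EE F|$ must be combined with exactly the right weighting, and the lower-order $\mathrm{Var}(F)$ contribution (together with the range of small $t$, for which the target bound is vacuous) must be absorbed. The other point requiring care is the a priori bound $\EE F\le C't_0$, which is precisely where the expression defining $t_0$ comes from: it uses that the entries of $W$ are centred and the worst-case bound $\|X\|_F^2\le p\|X\|^2$, both needed to control $\tfrac1T\tr\Phi$.
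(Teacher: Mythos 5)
Your proof is correct and reaches the stated bound, but by a genuinely different route from the paper's. The paper keeps the quadratic map $\sigma\mapsto\sigma^\trans A\sigma$ and restores its Lipschitz character by conditioning on the high-probability event $\{\|\sigma\|\le K\sqrt T\}$; this forces it to work with a Lipschitz extension $\tilde f$ of the restricted map, to control the bias $|\EE\tilde f-\EE f|$ introduced by the truncation, and finally to optimize over $K=\max(4t_0,\sqrt t)$ --- it is precisely this optimization that produces the $\min(t^2/t_0^2,t)$ exponent there. You instead reduce to $A\succeq 0$ via $A=\tfrac12(A+I_T)-\tfrac12(I_T-A)$, observe that $F=T^{-1/2}\|A^{1/2}\sigma\|$ is a globally $(\lambda_\varphi\lambda_\sigma\|X\|/\sqrt T)$-Lipschitz function of $\tilde w$, and recover the quadratic form as $F^2$; the sub-Gaussian/sub-exponential dichotomy then falls out of $|F^2-\EE[F^2]|\le Z^2+2(\EE F)Z+\mathrm{Var}(F)$, with the linear term weighted by $\EE F\le\sqrt{\tfrac1T\tr\Phi}\le Ct_0$ playing the role of the paper's $K$. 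This is the classical route to Hanson--Wright-type bounds for positive semi-definite matrices; it avoids both the conditioning bookkeeping and the extension argument, at the modest price of the preliminary symmetrization and PSD splitting (which only costs constants and a factor $2$ in $t$). One shared caveat: your a priori bound $\tfrac1T\tr\Phi\le Ct_0^2$ plays exactly the role of the paper's bound $\EE[T^{-1/2}\|\sigma\|]\le t_0$, and both tacitly use a normalization of $\varphi$ that is not literally in Assumption~\ref{ass:W} --- you invoke $\EE[\varphi(\tilde w_i)]=0$ to write $\EE[(w^\trans x_j)^2]=\mathrm{Var}(w^\trans x_j)$, while the paper writes $\EE[\|\varphi(\tilde w)\|^2]\le\lambda_\varphi^2 p$, which needs $\varphi(0)=0$. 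Under the intended zero-mean model for $W$ both are harmless, so this is a defect inherited from the statement rather than a gap in your argument.
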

\begin{proof}
	The layout of the proof is as follows: since the application $w\mapsto \frac1T\sigma^\trans A\sigma$ is ``quadratic'' in $w$ and thus not Lipschitz (therefore not allowing for a natural transfer of the concentration of $w$ to $\frac1T\sigma^\trans A\sigma$), we first prove that $\frac1{\sqrt{T}}\|\sigma\|$ satisfies a concentration inequality, which provides a high probability $O(1)$ bound on $\frac1{\sqrt{T}}\|\sigma\|$. Conditioning on this event, the map $w\mapsto \frac1{\sqrt{T}}\sigma^\trans A\sigma$ can then be shown to be Lipschitz (by isolating one of the $\sigma$ terms for bounding and the other one for retrieving the Lipschitz character) and\textcolor{black}{, up to an appropriate control of concentration results under conditioning,} the result is obtained.

	Following this plan, we first provide a concentration inequality for $\|\sigma\|$. To this end, note that the application \textcolor{black}{$\psi:\RR^p\to\RR^T$, $\tilde w\mapsto \sigma(\varphi(\tilde w)^\trans X)^\trans$} is Lipschitz with parameter \textcolor{black}{$\lambda_{\varphi}\lambda_\sigma \|X\|$} as the combination of the \textcolor{black}{$\lambda_{\varphi}$-Lipschitz function $\varphi : \tilde w \mapsto w$}, the \textcolor{black}{$\|X\|$}-Lipschitz map $\RR^n\to\RR^T$, $w\mapsto X^\trans w$ and the $\lambda_\sigma$-Lipschitz map $\RR^T\to\RR^T$, $Y\mapsto \sigma(Y)$. \textcolor{black}{As a Gaussian vector, $\tilde w$} has a normal concentration and so does $\psi(\tilde{w})$. Since the Euclidean norm $\RR^T\to\RR$, $Y\mapsto \|Y\|$ is $1$-Lipschitz, we thus have immediately by \eqref{eq:conc_lip}
	\begin{align*}
		\textcolor{black}{P\left( \left| \left\| \frac1{\sqrt{T}}\sigma(w^\trans X) \right\| - \EE\left[  \left\| \frac1{\sqrt{T}}\sigma(w^\trans X) \right\| \right] \right| \geq t \right) \leq C e^{-\frac{c T t^2}{\|X\|^2 \lambda_{\sigma}^2\lambda_{\varphi}^2}}}
	\end{align*}
	for some $c,C>0$ independent of all parameters. 
	
	Finally, using again the Lipschitz character of $\sigma(w^\trans X)$,
	\begin{align*}
		\left| \left\|\sigma(w^\trans X)\right\| - \left\| \sigma(0) 1_T^\trans \right\| \right| &\leq \left\|\sigma(w^\trans X) - \sigma(0) 1_T^\trans \right\| \leq \lambda_\sigma \left\| w\right\| \cdot \left\| X\right\|
	\end{align*}
	so that, by Jensen's inequality,
	\begin{align*}
		\EE\left[  \left\| \frac1{\sqrt{T}}\sigma(w^\trans X) \right\| \right] &\leq |\sigma(0)| + \lambda_\sigma \EE \left[ \frac1{\sqrt{T}} \left\| w\right\| \right] \left\| X\right\| \\
		&\leq |\sigma(0)| + \lambda_\sigma \sqrt{\EE \left[ \frac1T\|w\|^2 \right]} \|X\|
	\end{align*}
	\textcolor{black}{with $\EE[\|\varphi(\tilde{w})\|^2]\leq \lambda_\varphi^2\EE[\|\tilde{w}\|^2]=p\lambda_\varphi^2$ (since $\tilde w\sim \mathcal N(0,I_p)$). Letting $t_0\equiv |\sigma(0)| + \lambda_\sigma \lambda_\varphi \|X\|\sqrt{\frac{p}T}$, we then find
	\begin{align*}
		P\left( \left\| \frac1{\sqrt{T}}\sigma(w^\trans X) \right\| \geq t + t_0 \right) &\leq C e^{-\frac{cTt^2}{\lambda_{\varphi}^2\lambda_{\sigma}^2 \|X\|^2}}
	\end{align*}}%
which, with the remark $t\geq 4 t_0 \Rightarrow (t-t_0)^2 \geq t^2/2$, may be equivalently stated as
%	\begin{align}
%		\label{eq:sigma_bound}
%		\forall t\geq t_0,~P\left( \left\| \frac1{\sqrt{T}}\sigma(w^\trans X) \right\| \geq t \right) &\leq C e^{-c n t^2}.
%	\end{align}
	\textcolor{black}{\begin{align}
		\label{eq:sigma_bound}
		\forall t\geq 4t_0,~P\left( \left\| \frac1{\sqrt{T}}\sigma(w^\trans X) \right\| \geq t \right) &\leq C e^{-\frac{cTt^2}{2\lambda_{\varphi}^2\lambda_{\sigma}^2 \|X\|^2}}.
	\end{align}}
	As a side (but important) remark, note that, since 
	\begin{align*}
		P\left( \left\| \frac{\Sigma}{\sqrt{T}} \right\|_F \geq t \sqrt{T} \right) 
		&= P\left( \sqrt{\sum_{i=1}^n \left\| \frac{\sigma_i}{\sqrt{T}} \right\|^2} \geq t \sqrt{T} \right) \\
		&\leq P\left( \max_{1\leq i\leq n} \left\| \frac{\sigma_i}{\sqrt{T}} \right\| \geq \sqrt{\frac{T}n} t \right) \\
		&\leq n P\left( \left\| \frac{\sigma}{\sqrt{T}} \right\| \geq \sqrt{\frac{T}n} t \right)
	\end{align*}
	the result above implies that 
	\begin{align*}
		\textcolor{black}{\forall t\geq 4t_0,~P\left( \left\| \frac{\Sigma}{\sqrt{T}}\right\|_F \geq t \sqrt{T} \right) \leq Cn e^{-\frac{cT^2t^2}{2n\lambda_{\varphi}^2\lambda_{\sigma}^2 \|X\|^2}}}
	\end{align*}
	and thus, since $\|\cdot\|_F\geq \|\cdot\|$, we have
	\begin{align*}
		\forall t\geq 4t_0,~P\left( \left\| \frac{\Sigma}{\sqrt{T}}\right\| \geq t \sqrt{T} \right) &\leq Cn e^{-\frac{cT^2t^2}{2n\lambda_{\varphi}^2\lambda_{\sigma}^2 \|X\|^2}}
	\end{align*}
	Thus, in particular, under the additional Assumption~\ref{ass:growth}, with high probability, the operator norm of $\frac{\Sigma}{\sqrt{T}}$ cannot exceed a rate $\sqrt{T}$. 

	\begin{remark}[Loss of control of the structure of $\Sigma$]
		\label{rem:Sigma}
		The aforementioned control of $\|\Sigma\|$ arises from the bound $\|\Sigma\|\leq \|\Sigma\|_F$ which may be quite loose (by as much as a factor $\sqrt{T}$). Intuitively, \textcolor{black}{under the supplementary Assumption~\ref{ass:growth},} if $\EE[\sigma]\neq 0$, then $\frac{\Sigma}{\sqrt{T}}$ is ``dominated'' by the matrix $\frac1{\sqrt{T}}\EE[\sigma]1_T^\trans$, the operator norm of which is indeed of order $\sqrt{n}$ and the bound is tight. If $\sigma(t)=t$ and $\EE[W_{ij}]=0$, we however know that $\|\frac{\Sigma}{\sqrt T}\|=O(1)$ \citep{SIL98}. One is tempted to believe that, more generally, if $\EE[\sigma]=0$, then $\|\frac{\Sigma}{\sqrt T}\|$ should remain of this order. And, if instead $\EE[\sigma]\neq 0$, the contribution of $\frac1{\sqrt{T}}\EE[\sigma]1_T^\trans$ should merely engender a single large amplitude isolate singular value in the spectrum of $\frac{\Sigma}{\sqrt T}$ and the other singular values remain of order $O(1)$. These intuitions are not captured by our concentration of measure approach. 
		
		Since $\Sigma=\sigma(WX)$ is an entry-wise operation, concentration results with respect to the Frobenius norm are natural, where with respect to the operator norm are hardly accessible. %As shall be seen in Section~\ref{sec:proof_Etest}, this loss of control of the spectral structure of $\Sigma$ is a major difficulty in accurately proving Conjecture~\ref{claim:Etest}.
	\end{remark}
	
	Back to our present considerations, let us define the probability space $\mathcal A_K=\{w,~\|\sigma(w^\trans X)\|\leq K\sqrt{T}\}$. 
	\textcolor{black}{
		Conditioning the random variable of interest in Lemma~\ref{lem:concentration_quadform_1stpart} with respect to $\mathcal{A}_K$ and its complementary $\mathcal{A}_K^c$, for some $K\geq 4t_0$, gives
	\begin{align*}
		&P\left( \left| \frac1T\sigma(w^\trans X) A \sigma(w^\trans X)^\trans - \frac1T\tr \Phi A  \right| > t \right) \\
		&\leq P\left( \left\{\left| \frac1T\sigma(w^\trans X) A \sigma(w^\trans X)^\trans - \frac1T\tr \Phi A  \right| > t \right\}, \mathcal{A}_K\right) + P(\mathcal{A}_K^c).
	\end{align*}
	We can already bound $P(\mathcal A_K^c)$ thanks to \eqref{eq:sigma_bound}. As for the first right-hand side term, note that on the set $\{ \sigma(w^\trans X), w \in \mathcal{A}_K\}$, the function $f : \mathbb{R}^T \rightarrow \mathbb{R} : \ \sigma\mapsto \sigma^\trans A \sigma$ is $K \sqrt{T}$-Lipschitz. This is because, for all $\sigma,\sigma+h\in \{ \sigma(w^\trans X), w \in \mathcal{A}_K\}$,
	\begin{align*}
		\left\| f(\sigma+h) - f(\sigma) \right\| &= \left\| h^\trans A \sigma + (\sigma+h)^\trans A h \right\| \leq K\sqrt{T} \left\| h \right\|.
	\end{align*}
	Since conditioning does not allow for a straightforward application of \eqref{eq:conc_lip}, we consider instead $\tilde{f}$, a $K\sqrt{T}$-Lipschitz continuation to $\RR^T$ of $f_{\mathcal{A}_K}$, the restriction of $f$ to $\mathcal A_K$, such that all the radial derivative of $\tilde{f}$ are constant in the set $\{ \sigma, \| \sigma \| \geq K \sqrt{T}\}$. We may thus now apply \eqref{eq:conc_lip} and our previous results to obtain
	\begin{align*}
		P\left(\left|\tilde f (\sigma(w^\trans X))- \EE [\tilde{f}(\sigma(w^\trans X))]\right| \geq K T t\right) \leq e^{-\frac{cT t^2}{\|X\|^2 \lambda_{\sigma}^2\lambda_{\varphi}^2}}.
	\end{align*}
	Therefore,
	\begin{align*}
		&P\left(\left\{\left|f(\sigma(w^\trans X))- \EE [\tilde{f}(\sigma(w^\trans X))]\right| \geq K T t\right\}, \mathcal{A}_K\right) \\
		&= P\left(\left\{\left|\tilde f(\sigma(w^\trans X))- \EE [\tilde{f}(\sigma(w^\trans X))]\right| \geq K T t\right\}, \mathcal{A}_K\right) \\
		&\leq P\left(\left|\tilde f(\sigma(w^\trans X))- \EE [\tilde{f}(\sigma(w^\trans X))]\right| \geq K T t\right) \leq e^{-\frac{cT t^2}{\|X\|^2 \lambda_{\sigma}^2\lambda_{\varphi}^2}}.
	\end{align*}
	Our next step is then to bound the difference $\Delta=| \EE[\tilde{f}(\sigma(w^\trans X))]- \EE [f(\sigma(w^\trans X))]|$. Since $f$ and $\tilde f$ are equal on $\{\sigma, \|\sigma\|\leq K\sqrt{T}\}$,
	\begin{align*}
		\Delta \leq \int_{\| \sigma \| \geq K \sqrt{T}} \left(|f(\sigma)| + |\tilde{f}(\sigma)|\right) d\mu_{\sigma}(\sigma)
	\end{align*}
	where $\mu_{\sigma}$ is the law of $\sigma(w^\trans X)$. Since $\|A\| \leq 1$, for $\|\sigma \| \geq K \sqrt{T}$, $\max(|f(\sigma) |,|\tilde{f}(\sigma) |) \leq \|\sigma \|^2$ and thus
	 % The quadratic character $f$ imposes that $f$ lies over its tangent, hence the inequality $f(\sigma) \geq \tilde{f}(\sigma)$ for every $\sigma \in \mathbb{R}^T$. Thus we have :
	\begin{align*}
		\Delta
		&\leq 2 \int_{\|\sigma\| \geq K \sqrt{T}} \|\sigma \|^2 d\mu_{\sigma} = 2 \int_{\|\sigma\| \geq K \sqrt{T}} \int_{t=0}^{\infty} \mathds{1}_{\|\sigma\|^2\geq t} dt d\mu_{\sigma}\\
		&= 2 \int_{t=0}^{\infty} P\left(\left\{\|\sigma\|^2 \geq t \right\}, \mathcal{A}_K^c \right) dt\\
		&\leq 2\int_{t=0}^{K^2 T}P(\mathcal{A}_K^c)dt +2\int_{t=K^2 T}^{\infty} P(\|\sigma(w^\trans X) \|^2 \geq t) dt \\
		& \leq 2 P(\mathcal{A}_K^c) K^2 T + 2 \int_{t=K^2 T}^{\infty} C e^{- \frac{ct}{2\lambda_{\varphi}^2\lambda_{\sigma}^2 \|X\|^2}} dt\\
		&\leq 2 C TK^2 e^{-\frac{cTK^2}{2\lambda_{\varphi}^2\lambda_{\sigma}^2 \|X\|^2}}+\frac{2 C \lambda_{\varphi}^2\lambda_{\sigma}^2 \|X\|^2}{c} e^{- \frac{cTK^2}{2\lambda_{\varphi}^2\lambda_{\sigma}^2 \|X\|^2}} \\
		&\leq \frac{6C}{c}\lambda_{\varphi}^2\lambda_{\sigma}^2 \|X\|^2
	\end{align*}
	where in last inequality we used the fact that for $x\in\RR$, $x e^{-x} \leq e^{-1} \leq 1$, and $K\geq 4t_0\geq 4\lambda_\sigma\lambda_\varphi\|X\|\sqrt{\frac{p}T}$. As a consequence,
	\begin{align*}
		P\left(\left\{\left|f(\sigma(w^\trans X))- \EE [f(\sigma(w^\trans X))]\right| \geq K T t +\Delta\right\}, \mathcal{A}_K\right) \leq C e^{-\frac{cT t^2}{\|X\|^2 \lambda_{\varphi}^2\lambda_{\sigma}^2}}
	\end{align*}
	so that, with the same remark as before, for $t \geq \frac{4\Delta}{K T}$,
	\begin{align*}
	 	P\left(\left\{\left|f(\sigma(w^\trans X))- \EE [f(\sigma(w^\trans X))]\right| \geq K T t \right\}, \mathcal{A}_K\right) \leq C e^{-\frac{cT t^2}{2\|X\|^2 \lambda_{\varphi}^2\lambda_{\sigma}^2}}.
	\end{align*}
	To avoid the condition $t \geq \frac{4\Delta}{KT}$, we use the fact that, probabilities being lower than one, it suffices to replace $C$ by $\lambda C$ with $\lambda \geq 1$ such that
	\begin{align*}
		\lambda C e^{-c\frac{T t^2}{2\|X\|^2 \lambda_{\varphi}^2\lambda_{\sigma}^2}} \geq 1 \ \ \text{for} \ t \leq \frac{4\Delta}{K T}.
	\end{align*}
	The above inequality holds if we take for instance $\lambda=\frac1Ce^{\frac{ 18 C^2}{c}}$ since then $t\leq \frac{4\Delta}{KT}\leq\frac{24C\lambda_\varphi^2\lambda_\sigma^2\|X\|^2}{cKT}\leq \frac{6C\lambda_\varphi\lambda_\sigma\|X\|}{c\sqrt{pT}}$ (using successively $\Delta\geq \frac{6C}c \lambda_{\varphi}^2\lambda_{\sigma}^2 \|X\|^2$ and $K\geq 4\lambda_\sigma\lambda_\varphi\|X\|\sqrt{\frac{p}T}$) and thus
	\begin{align*}
		\lambda C e^{-\frac{cTt^2}{2\|X\|^2 \lambda_{\varphi}^2\lambda_{\sigma}^2}} \geq \lambda C e^{-\frac{ 18C^2}{cp}} \geq \lambda C e^{-\frac{ 18C^2}{c}} \geq 1.
	\end{align*}
	%\begin{align*}
	%	\lambda
	%	&\geq \frac{e^{\frac{16C}{c}}}{C} \geq \frac{e^{\frac{16C}{cp}}}C \geq\frac{\exp\left(\frac{ c \Delta_0^2}{2p \|X\|^4 \lambda_{\varphi}^4\lambda_{\sigma}^4}\right)}{C}\geq\frac{\exp\left(\frac{ c \Delta_0^2}{2t_0^2 T \|X\|^2 \lambda_{\varphi}^2\lambda_{\sigma}^2}\right)}{C} \geq\frac{e^{c\frac{T (4 \Delta/KT)^2}{2\|X\|^2 \lambda_{\varphi}^2\lambda_{\sigma}^2}}}C
	%\end{align*}
	%(since $K\geq 4t_0$).
	Therefore, setting $\lambda=\max(1,\frac1C e^{\frac{{C'}^2c}2})$, we get for every $t>0$
	\begin{align*}
		P\left(\left\{\left|f(\sigma(w^\trans X)- \EE [f(\sigma(w^\trans X)]\right| \geq K T t \right\}, \mathcal{A}_K\right) \leq \lambda C e^{-\frac{cT t^2}{2\|X\|^2 \lambda_{\varphi}^2\lambda_{\sigma}^2}}
	\end{align*}
	which, together with the inequality $P(\mathcal{A}_K^c) \leq C e^{-\frac{cTK^2}{2\lambda_{\varphi}^2\lambda_{\sigma}^2 \|X\|^2}}$, gives
	\begin{align*}
		P\left(\left|f(\sigma(w^\trans X)- \EE [f(\sigma(w^\trans X)]\right| \geq K T t \right) 
		&\leq \lambda C e^{-\frac{T ct^2}{2\|X\|^2 \lambda_{\varphi}^2\lambda_{\sigma}^2}}  + C e^{-\frac{cTK^2}{2\lambda_{\varphi}^2\lambda_{\sigma}^2 \|X\|^2}}.
	\end{align*}
	We then conclude
	\begin{align*}
		&P\left(\left|\frac1T\sigma(w^\trans X)A\sigma(w^\trans X)^\trans- \frac1T\tr (\Phi A)\right|\geq   t \right) \\
		&\leq (\lambda+1)C e^{-\frac{cT}{2 \|X \|^2 \lambda_{\varphi}^2\lambda_{\sigma}^2} \min(t^2/K^2, K^2)}
	\end{align*}
	and, with $K=\max(4t_0,\sqrt{t})$,
	\begin{align*}
		P\left(\left|\frac1T\sigma(w^\trans X)A\sigma(w^\trans X)^\trans- \frac1T\tr (\Phi A)\right|\geq   t \right) \leq (\lambda+1)C e^{-\frac{cT \min\left( \frac{t^2}{16t_0^2}, t\right)}{2 \|X \|^2 \lambda_{\varphi}^2\lambda_{\sigma}^2}}.
	\end{align*}
	Indeed, if $4t_0 \leq \sqrt{t}$ then $\min(t^2/K^2,K^2)=t$, while if $4t_0 \geq \sqrt{t}$ then $\min(t^2/K^2,K^2)=\min(t^2/16t_0^2,16t_0^2)=t^2/16t_0^2$.
	}
\end{proof}

As a corollary of Lemma~\ref{lem:concentration_quadform_1stpart}, we have the following control of the moments of $\frac1T\sigma^\trans A\sigma$.
\begin{corollary}[Moments of quadratic forms]
	\label{cor:moments_quadform}
	Let Assumptions~\ref{ass:W}--\ref{ass:sigma} hold. For $w\sim \mathcal N_\varphi(0,I_p)$, $\sigma\equiv \sigma(w^\trans X)^\trans\in\RR^T$, \textcolor{black}{$A \in \RR^{T\times T}$ such that $\| A \| \leq 1$}, and $k\in\NN$,
	\textcolor{black}{
	\begin{align*}
		\EE \left[ \left| \frac1T\sigma^\trans A \sigma - \frac1T\tr \Phi A  \right|^k \right] &\leq C_1 \left(\frac{t_0 \eta}{\sqrt{T}}\right)^k + C_2\left(\frac{\eta^2}{T}\right)^k
	\end{align*}
	with $t_0=|\sigma(0)| + \lambda_\sigma \lambda_\varphi \|X\|\sqrt{\frac{p}T}$, $\eta = \|X\| \lambda_\sigma \lambda_\varphi$, and $C_1,C_2>0$ independent of the other parameters. In particular, under the additional Assumption~\ref{ass:growth},
	\begin{align*}
		\EE \left[ \left| \frac1T\sigma^\trans A \sigma - \frac1T\tr \Phi A  \right|^k \right] &\leq  \frac{C}{n^{k/2}} 
	\end{align*}}
\end{corollary}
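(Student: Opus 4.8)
The plan is to integrate the tail estimate of Lemma~\ref{lem:concentration_quadform_1stpart} against the layer-cake identity $\EE[|Z|^k]=\int_0^\infty k t^{k-1}P(|Z|>t)\,dt$, where I write $Z=\frac1T\sigma^\trans A\sigma-\frac1T\tr\Phi A$ and $\eta=\lambda_\sigma\lambda_\varphi\|X\|$, so that the lemma reads $P(|Z|>t)\le C\exp\!\big(-\tfrac{cT}{\eta^2}\min(t^2/t_0^2,\,t)\big)$ for every $t>0$. The key structural observation is that $\min(t^2/t_0^2,t)$ equals $t^2/t_0^2$ on $(0,t_0^2)$ and $t$ on $[t_0^2,\infty)$, so I would split the integral at $t=t_0^2$, obtaining a Gaussian-type piece and an exponential-type piece.

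On the first piece, the substitution $u=\tfrac{\sqrt{cT}}{\eta t_0}\,t$ turns $\int_0^{t_0^2}k t^{k-1}Ce^{-cTt^2/(\eta^2 t_0^2)}dt$ into $kC\big(\tfrac{\eta t_0}{\sqrt{cT}}\big)^k\int_0^\infty u^{k-1}e^{-u^2}du=\tfrac{k}{2}C\,\Gamma(k/2)\big(\tfrac{\eta t_0}{\sqrt{cT}}\big)^k$. On the second piece, enlarging the range to $[0,\infty)$ and setting $v=cTt/\eta^2$ bounds $\int_{t_0^2}^\infty k t^{k-1}Ce^{-cTt/\eta^2}dt$ by $kC\,\Gamma(k)\big(\tfrac{\eta^2}{cT}\big)^k$. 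Adding the two gives $\EE[|Z|^k]\le C_1\big(\tfrac{t_0\eta}{\sqrt T}\big)^k+C_2\big(\tfrac{\eta^2}{T}\big)^k$, where $C_1,C_2$ collect the $\Gamma$-factors and the absolute constants $c,C$ of the lemma; following the conventions of the Notations paragraph they may depend on the fixed $k$ but on nothing else — in particular not on $n,p,T$ nor on $A$.

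For the ``in particular'' statement I would simply invoke Assumption~\ref{ass:growth}: then $\|X\|$ and $p/T$ are bounded and $T\asymp n$, so $t_0$ and $\eta$ are $O(1)$ while $t_0\eta/\sqrt T=O(n^{-1/2})$ and $\eta^2/T=O(n^{-1})$. Since $n^{-k}\le n^{-k/2}$ for $n\ge1$, the first term dominates, and $\EE[|Z|^k]\le C n^{-k/2}$ for a constant $C$ depending on $k$ only.

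There is no genuine obstacle here: the argument is essentially bookkeeping. The two points requiring a little care are (i) using that the tail bound of Lemma~\ref{lem:concentration_quadform_1stpart} holds for \emph{all} $t>0$ — the conditioning argument there was precisely arranged to remove any lower restriction on $t$ — so that the layer-cake integral is controlled on its whole range; and (ii) keeping explicit track of the constants, making clear that the $\Gamma(k/2)$ and $\Gamma(k)$ appearing are harmless because $k$ is fixed, so that the emerging $C_1,C_2,C$ are uniform in all the remaining parameters.
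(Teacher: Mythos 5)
Your argument is correct and is essentially the paper's own proof: both apply the layer-cake identity to the tail bound of Lemma~\ref{lem:concentration_quadform_1stpart}, split the integral according to the quadratic/linear regimes of the exponent, enlarge each piece to $[0,\infty)$, and evaluate via the substitutions yielding $\Gamma$-type integrals (your split point $t_0^2$ is in fact the exact switching point, whereas the paper splits at $t_0$ and relies on the same enlargement). The deduction of the $Cn^{-k/2}$ bound from Assumption~\ref{ass:growth} is likewise identical.
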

\begin{proof}
	%\footnote{The proof technique of Corollary~\eqref{cor:moments_quadform} is in fact quite standard and an immediate consequence of any concentration inequality. However, in the core of the proof of the main theorems, only bounds on moments presented in Corollary~\eqref{cor:moments_quadform} will be needed, so we do not need to introduce more generic notions of concentration of measure than necessary.}
	%The proof is quite standard. 
	We use the fact that, for a nonnegative random variable $Y$, $\EE[Y]=\int_0^\infty P(Y>t)dt$, so that
	\begin{align*}
		&\EE \left[ \left| \frac1T\sigma^\trans A \sigma - \frac1T\tr \Phi A  \right|^k \right] \\ 
		& \ \ \ \ = \int_0^\infty P\left( \left| \frac1T\sigma^\trans A \sigma - \frac1T\tr \Phi A  \right|^k > u \right) du \\
		& \ \ \ \ = \int_0^\infty kv^{k-1} P\left( \left| \frac1T\sigma^\trans A \sigma - \frac1T\tr \Phi A  \right| > v \right) dv \\
		& \ \ \ \ \textcolor{black}{\leq \int_0^\infty kv^{k-1}C e^{-\frac{cT}{\eta^2} \min\left(\frac{v^2}{t_0^2},v\right)} dv	} \\
		& \ \ \ \ \textcolor{black}{\leq \int_0^{t_0} kv^{k-1}C e^{-\frac{cTv^2}{t_0^2\eta^2}} dv +\int_{t_0}^{\infty}kv^{k-1} Ce^{-\frac{cTv}{\eta^2}}} dv \\
		& \ \ \ \ \textcolor{black}{\leq \int_0^{\infty} kv^{k-1}C e^{-\frac{cTv^2}{t_0^2\eta^2}}dv +\int_{0}^{\infty}kv^{k-1} Ce^{-\frac{cTv}{\eta^2} }dv} \\
		& \ \ \ \ \textcolor{black}{= \left(\frac{t_0 \eta}{\sqrt{cT}}\right)^k\int_0^{\infty} kt^{k-1}C e^{-t^2}dt + \left(\frac{\eta^2}{cT}\right)^k\int_{0}^{\infty}kt^{k-1} C e^{-t}dt}
		%*& \ \ \ \ \leq Ck \int_0^\infty v^{k-1} e^{-cnv^2} dv \\
		%& \ \ \ \ = \frac{Ck}{n^{\frac{k}2}} \int_0^\infty t^{k-1} e^{-ct^2} dt*
	\end{align*}
	which, along with the boundedness of the integrals, concludes the proof.
\end{proof}

\medskip

Beyond concentration results on functions of the vector $\sigma$, we also have the following convenient property for functions of the matrix $\Sigma$.
\begin{lemma}[Lipschitz functions of $\Sigma$]
	\label{lem:Lip_Sigma}
	Let $f:\RR^{n\times T}\to \RR$ be a $\lambda_f$-Lipschitz function with respect to the Froebnius norm. Then, under Assumptions~\ref{ass:W}--\ref{ass:sigma},
	\begin{align*}
		\textcolor{black}{P\left( \left| f\left( \frac{\Sigma}{\sqrt{T}} \right) - \EE f\left( \frac{\Sigma}{\sqrt{T}} \right) \right| > t \right) \leq Ce^{-\frac{cTt^2}{\lambda_\sigma^2\lambda_\varphi^2 \lambda_f^2 \|X\|^2}}}
	\end{align*}
	for some $C,c>0$. \textcolor{black}{In particular, under the additional Assumption~\ref{ass:growth},}
	\begin{align*}
		\textcolor{black}{P\left( \left| f\left( \frac{\Sigma}{\sqrt{T}} \right) - \EE f\left( \frac{\Sigma}{\sqrt{T}} \right) \right| > t \right) \leq Ce^{-cTt^2}.}
	\end{align*}
\end{lemma}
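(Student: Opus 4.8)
The plan is to exhibit $f(\Sigma/\sqrt T)$ as a Lipschitz function of the underlying Gaussian matrix $\tilde W$ and then apply the normal concentration inequality~\eqref{eq:conc_lip} verbatim. Recall that $\Sigma=\sigma(WX)$ with $W=\varphi(\tilde W)$ and $\tilde W\in\RR^{n\times p}$ having i.i.d.\ $\mathcal N(0,1)$ entries; identifying $\tilde W$ with a standard Gaussian vector of $\RR^{np}$ (whose Euclidean norm coincides with $\|\tilde W\|_F$), the quantity of interest is the image of $\tilde W$ under the chain of maps $\tilde W\mapsto W=\varphi(\tilde W)$, then $W\mapsto WX$, then $WX\mapsto\Sigma=\sigma(WX)$, then $\Sigma\mapsto\Sigma/\sqrt T$, and finally $\Sigma/\sqrt T\mapsto f(\Sigma/\sqrt T)$. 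It therefore suffices to control the Lipschitz constant of each link, all matrix spaces being equipped with the Frobenius norm.

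First I would record that applying a $\lambda$-Lipschitz scalar function entrywise to a matrix is $\lambda$-Lipschitz for the Frobenius norm, since $\|g(A)-g(B)\|_F^2=\sum_{ij}(g(A_{ij})-g(B_{ij}))^2\le\lambda^2\|A-B\|_F^2$; this gives Lipschitz constant $\lambda_\varphi$ for $\tilde W\mapsto W$ and $\lambda_\sigma$ for $WX\mapsto\Sigma$. Next, right-multiplication by $X$ is $\|X\|$-Lipschitz for the Frobenius norm, because $\|(A-B)X\|_F^2=\tr\!\big((A-B)^\trans(A-B)XX^\trans\big)\le\|XX^\trans\|\,\|A-B\|_F^2=\|X\|^2\|A-B\|_F^2$. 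Finally $\Sigma\mapsto\Sigma/\sqrt T$ has constant $1/\sqrt T$ and $f$ has constant $\lambda_f$. Composing, the map $\tilde W\mapsto f(\Sigma/\sqrt T)$ is $L$-Lipschitz with $L=\lambda_f\lambda_\sigma\lambda_\varphi\|X\|/\sqrt T$.

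Applying~\eqref{eq:conc_lip} to this $L$-Lipschitz functional of the Gaussian vector $\tilde W$ yields directly
\[
P\!\left(\left|f\!\left(\tfrac{\Sigma}{\sqrt T}\right)-\EE f\!\left(\tfrac{\Sigma}{\sqrt T}\right)\right|>t\right)\le Ce^{-ct^2/L^2}=Ce^{-\frac{cTt^2}{\lambda_\sigma^2\lambda_\varphi^2\lambda_f^2\|X\|^2}},
\]
which is the first assertion; under the additional Assumption~\ref{ass:growth} the quantities $\lambda_\sigma,\lambda_\varphi,\lambda_f,\|X\|$ are all bounded by constants, so the exponent dominates $c'Tt^2$ for some $c'>0$ and the second assertion follows. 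This argument is essentially routine: there is no genuine obstacle, the only point deserving a moment's attention being the bookkeeping of Lipschitz constants, specifically the fact that right-multiplication by $X$ is $\|X\|$-Lipschitz (the operator norm, not $\|X\|_F$) with respect to the Frobenius norm, which is precisely where $\|X\|$ enters the final exponent.
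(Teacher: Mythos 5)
Your proposal is correct and follows essentially the same route as the paper: both arguments view $f(\Sigma/\sqrt T)$ as a composition of Lipschitz maps applied to the Gaussian matrix $\tilde W$, with the key bookkeeping step being $\|HX\|_F^2=\tr\left(HXX^\trans H^\trans\right)\leq\|X\|^2\|H\|_F^2$, and then invoke the Gaussian concentration inequality \eqref{eq:conc_lip}. The only cosmetic difference is that the paper first states the concentration for Lipschitz functionals of $W=\varphi(\tilde W)$ (absorbing $\lambda_\varphi$ into the exponent) and then bounds the Lipschitz constant of $W\mapsto f(\Sigma/\sqrt T)$, whereas you compose all five links at once; the resulting constant $L=\lambda_f\lambda_\sigma\lambda_\varphi\|X\|/\sqrt T$ is identical.
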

\begin{proof}
	Denoting $W=\varphi(\tilde W)$, since ${\rm vec}(\tilde W)\equiv\textcolor{black}{[\tilde W_{11},\cdots,\tilde W_{np}]}$ is a Gaussian vector, by the normal concentration of Gaussian vectors, for $g$ a $\lambda_g$-Lipschitz function of $W$ with respect to the Frobenius norm (i.e., the Euclidean norm of ${\rm vec}(W)$), by \eqref{eq:conc_lip},
	\begin{align*}
		P\left( \left| g(W) - \EE[g(W)] \right| > t \right) = P\left( \left| g(\varphi(\tilde W)) - \EE[g(\varphi(\tilde W))] \right| > t \right) &\leq Ce^{-\frac{ct^2}{\lambda_g^2\lambda_\varphi^2}}
	\end{align*}
	for some $C,c>0$. Let's \textcolor{black}{consider} in particular $g:W\mapsto f(\Sigma/\sqrt{T})$ and remark that
	\begin{align*}
		\left| g(W+H)-g(W) \right| &= \left| f\left( \frac{\sigma( (W+H)X )}{\sqrt{T}} \right) - f\left( \frac{\sigma( WX )}{\sqrt{T}} \right) \right| \\
		&\leq \frac{\lambda_f}{\sqrt{T}} \left\| \sigma( (W+H)X ) - \sigma( WX ) \right\|_F \\
		&\leq \frac{\lambda_f\lambda_\sigma}{\sqrt{T}} \left\| HX \right\|_F \\
		&= \frac{\lambda_f\lambda_\sigma}{\sqrt{T}} \sqrt{ \tr HXX^\trans H^\trans } \\
		&\leq \frac{\lambda_f\lambda_\sigma}{\sqrt{T}} \sqrt{\left\| XX^\trans \right\|} \|H\|_F
	\end{align*}
	\textcolor{black}{concluding the proof.}
	%*Since $\limsup_n \|XX^\trans\|<\infty$ and $\lambda_\sigma,\lambda_\varphi$ do not depend on $n$, we conclude that $g$ is indeed Lipschitz with parameter bounded by $c\lambda_f/\sqrt{n}$ for some $c>0$, which concludes the proof.*
\end{proof}

A first corollary of Lemma~\ref{lem:Lip_Sigma} is the concentration of the Stieltjes transform $\frac1T\tr \left( \frac1T\Sigma^\trans \Sigma -z I_T \right)^{-1}$ of $\mu_n$, the empirical spectral measure of $\frac1T\Sigma^\trans \Sigma$, for all $z\in\CC\setminus\RR^+$ (so in particular, for $z=-\gamma$, $\gamma>0$).
\begin{corollary}[Concentration of the Stieltjes transform of $\mu_n$]
	\label{cor:ST}
	Under \textcolor{black}{Assumptions~\ref{ass:W}--\ref{ass:sigma}, for $z\in\CC\setminus \RR^+$,
	\begin{align*}
		&P\left( \left| \frac1T\tr \left( \frac1T\Sigma^\trans \Sigma -z I_T \right)^{-1} - \EE\left[ \frac1T\tr\left( \frac1T\Sigma^\trans \Sigma -z I_T \right)^{-1} \right] \right| > t \right) \nonumber \\
		&\leq Ce^{-\frac{c{\rm dist}(z,\RR^+)^2Tt^2}{\lambda_\sigma^2\lambda_\varphi^2 \|X\|^2}}
	\end{align*}
	for some $C,c>0$, where ${\rm dist}(z,\RR^+)$ is the Hausdorff set distance. In particular, for $z=-\gamma$, $\gamma>0$, and under the additional Assumption~\ref{ass:growth}}
	\begin{align*}
		P\left( \left| \frac1T\tr Q - \frac1T\tr \EE[Q] \right| > t \right) &\leq Ce^{-cnt^2}.
	\end{align*}
\end{corollary}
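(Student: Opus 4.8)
The plan is to realize $\frac1T\tr(\frac1T\Sigma^\trans\Sigma-zI_T)^{-1}$ as a Lipschitz function of $\Sigma/\sqrt T$ and then invoke Lemma~\ref{lem:Lip_Sigma}. Put $M\equiv\Sigma/\sqrt T\in\RR^{n\times T}$, so that $\frac1T\Sigma^\trans\Sigma=M^\trans M$, and define $f:\RR^{n\times T}\to\CC$ by $f(M)=\frac1T\tr(M^\trans M-zI_T)^{-1}$; then $f(\Sigma/\sqrt T)$ is precisely the random variable of interest. Since Lemma~\ref{lem:Lip_Sigma} is stated for real-valued maps, one applies it separately to $\Re f$ and $\Im f$ (each of which is Lipschitz with at most the Lipschitz constant of $f$) and combines via a union bound; everything thus reduces to bounding the Lipschitz constant of $f$ on all of $\RR^{n\times T}$.

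For this, write $R=R(M)=(M^\trans M-zI_T)^{-1}$. Differentiating and using that $M^\trans M$ is real symmetric (hence $R^\trans=R$) together with cyclicity of the trace, one gets $Df(M)[H]=-\frac2T\tr(R^2M^\trans H)$, so that Cauchy--Schwarz for the Frobenius inner product gives $|Df(M)[H]|\le\frac2T\|R^2M^\trans\|_F\,\|H\|_F$. It remains to bound $\|R^2M^\trans\|_F$ uniformly in $M$: with the spectral decomposition $M^\trans M=\sum_i\lambda_iu_iu_i^\trans$, all $\lambda_i\ge0$ since $M^\trans M$ is positive semi-definite, one computes $\|R^2M^\trans\|_F^2=\tr\big(M^\trans M\,(R^2)^\herm R^2\big)=\sum_i\lambda_i|\lambda_i-z|^{-4}\le T\sup_{\lambda\ge0}\lambda|\lambda-z|^{-4}$, and this last supremum is finite and controlled by $\dist(z,\RR^+)$ (for instance, for $z=-\gamma$, $\sup_{\lambda\ge0}\lambda(\lambda+\gamma)^{-4}\le(4\gamma^3)^{-1}$, so $\|R^2M^\trans\|_F^2\le T(4\gamma^3)^{-1}$). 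Hence $f$ is $\lambda_f$-Lipschitz with $\lambda_f\le C\,\dist(z,\RR^+)^{-1}$ for an absolute constant $C$ (in fact $\lambda_f=O(T^{-1/2})$, which only improves the final bound). Feeding $\lambda_f$ into Lemma~\ref{lem:Lip_Sigma} yields the first displayed inequality. Specializing $z=-\gamma$, $\gamma>0$: $\dist(-\gamma,\RR^+)=\gamma$ is a positive constant, so under Assumption~\ref{ass:growth} (which forces $T\asymp n$ and keeps $\gamma,\lambda_\sigma,\lambda_\varphi$ and $\|X\|$ bounded) the exponent becomes $-cnt^2$, giving $P\big(|\frac1T\tr Q-\frac1T\tr\EE[Q]|>t\big)\le Ce^{-cnt^2}$.

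The only real obstacle is obtaining a Lipschitz constant for $f$ that is uniform over the whole space $\RR^{n\times T}$. This is exactly where the resolvent structure pays off: in contrast with the quadratic-form Lemma~\ref{lem:concentration_quadform_1stpart}, where $w\mapsto\frac1T\sigma^\trans A\sigma$ is only Lipschitz after conditioning on the event $\mathcal A_K$ controlling $\|\sigma\|$, here the elementary resolvent bound $\|(M^\trans M-zI_T)^{-1}\|\le\dist(z,\RR^+)^{-1}$ together with the positive semi-definiteness of $M^\trans M$ keeps $\|Df(M)\|$ bounded on all of $\RR^{n\times T}$, with no need whatsoever for a bound on $\|\Sigma\|$ --- which, as stressed in Remark~\ref{rem:Sigma}, is unavailable. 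Consequently no conditioning or Lipschitz-continuation device is required, and the proof is just the two-line resolvent estimate above followed by a direct application of Lemma~\ref{lem:Lip_Sigma}.
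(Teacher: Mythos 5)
Your proposal is correct and follows essentially the same route as the paper: both reduce the claim to Lemma~\ref{lem:Lip_Sigma} applied to $f(R)=\frac1T\tr(R^\trans R-zI_T)^{-1}$ and bound its Lipschitz constant by elementary resolvent estimates, with no conditioning on $\|\Sigma\|$ needed (the paper bounds $|f(R+H)-f(R)|$ directly via the resolvent identity and the operator-norm bounds of Lemma~\ref{lem:norm_control}, whereas you differentiate and use Cauchy--Schwarz in Frobenius norm, which even yields the sharper $\lambda_f=O(T^{-1/2}\dist(z,\RR^+)^{-3/2})$). Your remark that $\Re f$ and $\Im f$ must be treated separately before applying the (real-valued) Lemma~\ref{lem:Lip_Sigma} is a point the paper leaves implicit, and your explicit computation for $z=-\gamma$ correctly delivers the second display.
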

\begin{proof}
	We can apply Lemma~\ref{lem:Lip_Sigma} for $f:R\mapsto \frac1T\tr (R^\trans R-zI_T)^{-1}$, since we have
	\begin{align*}
		&\left| f(R+H) - f(R) \right| \nonumber \\
		&= \left| \frac1T\tr ( (R+H)^\trans (R+H)-zI_T)^{-1} ( (R+H)^\trans H + H^\trans R ) (R^\trans R-zI_T)^{-1} \right| \\
		&\leq \left| \frac1T\tr ( (R+H)^\trans (R+H)-zI_T)^{-1} (R+H)^\trans H (R^\trans R-zI_T)^{-1} \right| \nonumber \\ 
		&+ \left| \frac1T\tr ( (R+H)^\trans (R+H)-zI_T)^{-1}H^\trans R(R^\trans R-zI_T)^{-1} \right| \\
		&\leq \frac{2\|H\|}{ {\rm dist}(z,\RR^+)^{\frac32}} \leq \frac{2\|H\|_F}{ {\rm dist}(z,\RR^+)^{\frac32}}
	\end{align*}
	where, for the second to last inequality, we successively used the relations $|\tr AB|\leq \sqrt{\tr AA^\trans}\sqrt{\tr BB^\trans}$, $|\tr CD|\leq \|D\| \tr C$ for nonnegative definite $C$, and $\|(R^\trans R-zI_T)^{-1}\|\leq {\rm dist}(z,\RR^+)^{-1}$, $\|(R^\trans R-zI_T)^{-1}R^\trans R\|\leq 1$, $\|(R^\trans R-zI_T)^{-1}R^\trans \|=\|(R^\trans R-zI_T)^{-1}R^\trans R (R^\trans R-zI_T)^{-1}\|^{\frac12}\leq \|(R^\trans R-zI_T)^{-1}R^\trans R\|^\frac12 \| (R^\trans R-zI_T)^{-1}\|^{\frac12}\leq {\rm dist}(z,\RR^+)^{-\frac12}$, for $z\in\CC\setminus\RR^+$, and finally $\|\cdot\|\leq \|\cdot\|_F$. 
\end{proof}

Lemma~\ref{lem:Lip_Sigma} also allows for an important application of Lemma~\ref{lem:concentration_quadform_1stpart} as follows.
\begin{lemma}[Concentration of $\frac1T\sigma^\trans Q_-\sigma$]
	\label{lem:concentration_sQs}
	Let Assumptions~\ref{ass:W}--\ref{ass:growth} hold and write $W^\trans=[w_1,\ldots,w_n]$. Define $\sigma\equiv \sigma(w_1^\trans X)^\trans\in\RR^T$ and, for $W_-^\trans=[w_2,\ldots,w_n]$ and $\Sigma_-=\sigma(W_-X)$, let $Q_-=(\frac1T\Sigma_-^\trans\Sigma_-+\gamma I_T)^{-1}$. Then, for $A,B\in\RR^{T\times T}$ such that \textcolor{black}{$\|A\|, \|B\| \leq 1$} %*$\limsup_T\max(\|A\|,\|B\|)<\infty$*,
	\begin{align*}
		P\left( \left| \frac1T\sigma^\trans AQ_-B \sigma - \frac1T\tr \Phi A\EE[Q_-]B  \right| > t \right) &\leq C e^{-cn \min (t^2,t)}
	\end{align*}
	for some $C,c>0$ independent of the other parameters.
\end{lemma}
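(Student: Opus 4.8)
The plan is to exploit that the row $w_1$ of $W$ is independent of $W_-=[w_2,\ldots,w_n]^\trans$ (the rows of $W$ being i.i.d., since $\tilde W$ has i.i.d.\ entries and $\varphi$ acts entry-wise), hence that $\sigma$ is independent of $Q_-$, and to decompose
\begin{align*}
	\left| \tfrac1T\sigma^\trans AQ_-B\sigma - \tfrac1T\tr \Phi A\EE[Q_-]B \right| &\leq \left| \tfrac1T\sigma^\trans AQ_-B\sigma - \tfrac1T\tr \Phi AQ_-B \right| \\
	&\quad + \left| \tfrac1T\tr \Phi AQ_-B - \tfrac1T\tr \Phi A\EE[Q_-]B \right|,
\end{align*}
bounding the first term by Lemma~\ref{lem:concentration_quadform_1stpart} applied \emph{conditionally} on $W_-$, and the second by Lemma~\ref{lem:Lip_Sigma} applied to $\Sigma_-$.

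For the first term, fix $W_-$, so that $Q_-$ is deterministic. Since $\sigma^\trans AQ_-B\sigma$ is a scalar and $Q_-=Q_-^\trans$, it equals $\sigma^\trans\tilde M\sigma$ for the symmetric matrix $\tilde M\equiv\frac12(AQ_-B+B^\trans Q_-A^\trans)$, which satisfies $\|\tilde M\|\leq\|Q_-\|\leq\gamma^{-1}$ and $\tr\Phi\tilde M=\tr\Phi AQ_-B$. Applying Lemma~\ref{lem:concentration_quadform_1stpart} to the matrix $\gamma\tilde M$ (of operator norm $\leq1$) and rescaling by $\gamma^{-1}$ — with $\gamma$ a fixed constant so that $\min(\gamma t,\gamma^2t^2)\geq c\min(t,t^2)$ — gives, for constants $C,c>0$ that do not depend on $W_-$ (those in Lemma~\ref{lem:concentration_quadform_1stpart} depending only on $\|X\|,\lambda_\sigma,\lambda_\varphi$ and on the unit bound on the quadratic-form matrix),
\begin{align*}
	P\left( \left| \tfrac1T\sigma^\trans AQ_-B\sigma - \tfrac1T\tr \Phi AQ_-B \right| > t ~\Big|~ W_- \right) \leq Ce^{-cn\min(t,t^2)}.
\end{align*}
Since $\sigma$ and $W_-$ are independent, taking the expectation over $W_-$ leaves this bound unchanged.

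For the second term, write $R=\Sigma_-/\sqrt T$, so that $Q_-=(R^\trans R+\gamma I_T)^{-1}$, and consider $g(R)=\frac1T\tr\Phi A(R^\trans R+\gamma I_T)^{-1}B$. By the resolvent identity, for $S=R+H$ and $Q'=(S^\trans S+\gamma I_T)^{-1}$,
\begin{align*}
	g(R+H)-g(R) = -\tfrac1T\tr\Phi A Q'(S^\trans H+H^\trans R)Q_-B;
\end{align*}
grouping the $\Phi$ factor with the resolvent factors (measured in operator norm) and the $H$ factor separately, and using $|\tr MN|\leq\|M\|_F\|N\|_F$, $\|M_1M_2\|_F\leq\|M_1\|_F\|M_2\|$ together with $\|Q'S^\trans\|=\|SQ'\|\leq\gamma^{-1/2}$ and $\|RQ_-\|\leq\gamma^{-1/2}$, one obtains $|g(R+H)-g(R)|\leq 2\gamma^{-3/2}\frac{\|\Phi\|_F}T\|H\|_F$. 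Because $\Phi\succeq0$,
\begin{align*}
	\|\Phi\|_F \leq \tr\Phi = \EE\left[\|\sigma(w^\trans X)\|^2\right] \leq 2T|\sigma(0)|^2 + 2\lambda_\sigma^2\lambda_\varphi^2\|X\|^2 p,
\end{align*}
which is $O(T)$ under Assumption~\ref{ass:growth}, so $g$ is Lipschitz in the Frobenius norm with an $O(1)$ constant. Lemma~\ref{lem:Lip_Sigma} (which applies verbatim to $\Sigma_-$, its bound $Ce^{-cTt^2}$ being of the form $Ce^{-cnt^2}$ under Assumption~\ref{ass:growth}), together with $\EE[\frac1T\tr\Phi AQ_-B]=\frac1T\tr\Phi A\EE[Q_-]B$, then yields $P(|\frac1T\tr\Phi AQ_-B-\frac1T\tr\Phi A\EE[Q_-]B|>t)\leq Ce^{-cnt^2}$. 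A union bound over the two terms, evaluated at $t/2$, and the inequality $e^{-cnt^2}\leq e^{-cn\min(t,t^2)}$ for $t\leq1$ (the range $t>1$ being absorbed by adjusting constants), give the claim.

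The main obstacle is the Lipschitz estimate for $g$: the factor $\Phi$ must be paired against the resolvent factors in such a way that it is measured in the Frobenius norm, where $\|\Phi\|_F\leq\tr\Phi=O(T)$ thanks to the positive semidefiniteness of $\Phi$, exactly compensating the prefactor $1/T$; measuring $\Phi$ in operator norm (it can be of order $T$ when $\EE[\sigma]\neq0$, cf.\ Remark~\ref{rem:Sigma}) would not suffice. Keeping this Lipschitz constant of order $1$ is precisely what lets Lemma~\ref{lem:Lip_Sigma} deliver the $e^{-cnt^2}$ rate rather than a vacuous one.
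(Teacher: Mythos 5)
Your proof is correct, but it reverses the order of the paper's decomposition. The paper splits the error as
\begin{align*}
\left(\tfrac1T\sigma^\trans AQ_-B\sigma-\tfrac1T\sigma^\trans A\EE[Q_-]B\sigma\right)+\left(\tfrac1T\sigma^\trans A\EE[Q_-]B\sigma-\tfrac1T\tr\Phi A\EE[Q_-]B\right),
\end{align*}
first using Lemma~\ref{lem:Lip_Sigma} on the map $\Sigma_-/\sqrt T\mapsto\frac1T\sigma^\trans AQ_-B\sigma$ (conditionally on the event $\frac1T\|\sigma\|^2\leq K$, whose complement is controlled by \eqref{eq:sigma_bound}) to replace $Q_-$ by $\EE[Q_-]$, and only then applying the quadratic-form Lemma~\ref{lem:concentration_quadform_1stpart} to the deterministic bounded-norm matrix $A\EE[Q_-]B$. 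You instead first freeze $W_-$ and apply the quadratic-form lemma conditionally (replacing $\sigma\sigma^\trans$ by $\Phi$), then concentrate the linear statistic $\frac1T\tr\Phi AQ_-B$ around its mean via Lemma~\ref{lem:Lip_Sigma}. Your first step is arguably cleaner than the paper's, since the truncation on $\|\sigma\|$ is already built into Lemma~\ref{lem:concentration_quadform_1stpart} and the uniformity of its constants in the conditioning on $W_-$ is immediate; the price is that your second step needs the extra observation $\|\Phi\|_F\leq\tr\Phi=O(T)$ to make $R\mapsto\frac1T\tr\Phi A(R^\trans R+\gamma I_T)^{-1}B$ Lipschitz with an $O(1)$ constant --- a point you correctly identify as the crux, and which the paper's ordering avoids entirely because $\Phi$ never appears inside the Lipschitz functional there. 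Both routes deliver the stated $Ce^{-cn\min(t,t^2)}$ bound, and all your intermediate estimates (the symmetrization and $\gamma$-rescaling of $AQ_-B$, the resolvent-identity Lipschitz bound, the union bound at $t/2$) check out.
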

\begin{proof}
	Let $f:R\mapsto \frac1T\sigma^\trans A(R^\trans R+\gamma I_T)^{-1}B\sigma$. Reproducing the proof of Corollary~\ref{cor:ST}, conditionally to $\frac1T\|\sigma\|^2\leq K$ for any arbitrary large enough $K>0$, it appears that $f$ is Lipschitz with parameter of order $O(1)$. Along with \eqref{eq:sigma_bound} \textcolor{black}{and Assumption~\ref{ass:growth}}, this thus ensures that
	\begin{align*}
		&P\left( \left| \frac1T\sigma^\trans AQ_-B \sigma - \frac1T\sigma^\trans A\EE[Q_-]B \sigma  \right| > t \right) \nonumber \\
		&\leq P\left( \left| \frac1T\sigma^\trans AQ_-B \sigma - \frac1T\sigma^\trans A\EE[Q_-]B \sigma  \right| > t,~\frac{\|\sigma\|^2}T\leq K \right) + P\left( \frac{\|\sigma\|^2}T> K \right) \\
		&\leq C e^{-cn t^2}
	\end{align*}
	for some $C,c>0$. We may then apply Lemma~\ref{lem:concentration_quadform} on the bounded norm matrix $A\EE[Q_-]B$ to further find that
	\begin{align*}
		&P\left( \left| \frac1T\sigma^\trans AQ_-B \sigma - \frac1T\tr \Phi A\EE[Q_-]B \right| > t \right) \nonumber \\
		&\leq P\left( \left| \frac1T\sigma^\trans AQ_-B \sigma - \frac1T\sigma^\trans A\EE[Q_-]B \sigma \right| > \frac{t}2 \right) \nonumber \\
		&+ P\left( \left| \frac1T\sigma^\trans A\EE[Q_-]B \sigma - \frac1T\tr \Phi A\EE[Q_-]B \right| > \frac{t}2 \right) \\
		&\leq C' e^{-c'n \min (t^2,t)}
	\end{align*}
	which concludes the proof.
\end{proof}

As a further corollary of Lemma~\ref{lem:Lip_Sigma}, we have the following concentration result on the training mean-square error of the neural network under study.
\begin{corollary}[Concentration of the mean-square error]
	\label{cor:Etrain}
	Under Assumptions~\ref{ass:W}--\ref{ass:growth},
	\begin{align*}
		P\left( \left| \frac1T\tr Y^\trans YQ^2 - \frac1T \tr Y^\trans Y\EE\left[ Q^2 \right]\right| > t \right) &\leq Ce^{-cnt^2} 
	\end{align*}
	for some $C,c>0$ independent of the other parameters.
\end{corollary}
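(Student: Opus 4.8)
The plan is to recognize $\frac1T\tr Y^\trans Y Q^2$ as a Lipschitz functional of the matrix $\Sigma/\sqrt T$ and then invoke Lemma~\ref{lem:Lip_Sigma}. Concretely, I would define $f:\RR^{n\times T}\to\RR$ by $f(R)=\frac1T\tr Y^\trans Y\,(R^\trans R+\gamma I_T)^{-2}$, so that $f(\Sigma/\sqrt T)=\frac1T\tr Y^\trans Y Q^2$ because $R^\trans R=\frac1T\Sigma^\trans\Sigma$ when $R=\Sigma/\sqrt T$. Everything then reduces to showing that $f$ is $\lambda_f$-Lipschitz with respect to the Frobenius norm for some $\lambda_f=O(1)$ under Assumption~\ref{ass:growth}; Lemma~\ref{lem:Lip_Sigma} will immediately give $P(|\frac1T\tr Y^\trans Y Q^2-\frac1T\tr Y^\trans Y\EE[Q^2]|>t)\le Ce^{-cTt^2/\lambda_f^2}\le Ce^{-cnt^2}$, as desired.

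For the Lipschitz estimate I would fix $R,H$, write $Q_R=(R^\trans R+\gamma I_T)^{-1}$ and $Q_{R+H}=((R+H)^\trans(R+H)+\gamma I_T)^{-1}$, and expand $f(R+H)-f(R)=\frac1T\tr Y^\trans Y(Q_{R+H}^2-Q_R^2)$ using the resolvent identity $Q_{R+H}-Q_R=-Q_{R+H}\big((R+H)^\trans H+H^\trans R\big)Q_R$ together with $Q_{R+H}^2-Q_R^2=Q_{R+H}(Q_{R+H}-Q_R)+(Q_{R+H}-Q_R)Q_R$. This produces a finite sum of terms, each of the shape $\pm\frac1T\tr\big(Y^\trans Y\,P_1\,N\,P_2\big)$ where $P_1,P_2$ are products of at most two of the resolvents $Q_R,Q_{R+H}$ and $N\in\{(R+H)^\trans H,\ H^\trans R\}$ supplies the single factor $H$ or $H^\trans$. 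Cyclically permuting the trace to move that factor to the far right, each term is at most $\frac1T\|C\|_F\|H\|_F$ by $|\tr CH|\le\|C\|_F\|H\|_F$, where $C$ gathers all the remaining factors, namely one copy of $Y^\trans Y$ and a bounded number of copies of $Q_R,Q_{R+H},R,(R+H)^\trans$.

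The final step is the bound $\|C\|_F=O(T)$. Here I would use the elementary estimates $\|Q_R\|,\|Q_{R+H}\|\le\gamma^{-1}$ and $\|RQ_R\|=\|(R^\trans R+\gamma I_T)^{-1}R^\trans R(R^\trans R+\gamma I_T)^{-1}\|^{1/2}\le\gamma^{-1/2}$ (and likewise $\|Q_{R+H}(R+H)^\trans\|\le\gamma^{-1/2}$), so that every product of resolvents with an adjacent $R$ or $(R+H)^\trans$ has $O(1)$ operator norm, together with the crucial observation that $Y^\trans Y$ has rank at most $d$: since $d$ is constant and $\max_{ij}|Y_{ij}|$ is bounded under Assumption~\ref{ass:growth}, $\|Y^\trans Y\|_F\le\sqrt d\,\|Y\|^2\le\sqrt d\,\|Y\|_F^2\le d^{3/2}T\max_{ij}Y_{ij}^2=O(T)$. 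Submultiplicativity of $\|\cdot\|_F$ against $\|\cdot\|$ then yields $\|C\|_F\le\|Y^\trans Y\|_F\cdot O(1)=O(T)$, so $\frac1T\|C\|_F=O(1)$; summing the finitely many terms gives $\lambda_f=O(1)$.

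The argument is essentially routine; the only place that needs attention — and the potential obstacle — is the bookkeeping of powers of $T$: the prefactor $\frac1T$ has to exactly cancel the $\|Y^\trans Y\|_F=O(T)$ growth, which is why it is the boundedness of $\max_{ij}|Y_{ij}|$ (not of $\|Y\|$, which is only $O(\sqrt T)$) together with the constancy of $d$ that make the functional genuinely $O(1)$-Lipschitz rather than degrading with $n$.
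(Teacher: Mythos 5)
Your proposal is correct and follows essentially the same route as the paper: the same functional $f(R)=\frac1T\tr Y^\trans Y(R^\trans R+\gamma I_T)^{-2}$, the same resolvent decomposition of $Q_{R+H}^2-Q_R^2$, the same operator-norm bounds $\|Q_R\|\le\gamma^{-1}$ and $\|RQ_R\|\le\gamma^{-1/2}$, and then Lemma~\ref{lem:Lip_Sigma}. The only (cosmetic) difference is the final trace estimate: you use $|\tr CH|\le\|C\|_F\|H\|_F$ together with the rank-$d$ structure of $Y^\trans Y$ to get $\|Y^\trans Y\|_F=O(T)$, whereas the paper uses $|\tr CD|\le\|D\|\tr C$ for nonnegative definite $C=Y^\trans Y$ and the bound $\frac1T\tr Y^\trans Y=O(1)$ — both of which rest on the same boundedness of $\max_{ij}|Y_{ij}|$ and constancy of $d$ from Assumption~\ref{ass:growth}.
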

\begin{proof}
	We apply Lemma~\ref{lem:Lip_Sigma} to the mapping $f:R\mapsto \frac1T\tr Y^\trans Y(R^\trans R+\gamma I_T)^{-2}$. Denoting $Q=(R^\trans R+\gamma I_T)^{-1}$ and $Q^H=( (R+H)^\trans(R+H)+\gamma I_T)^{-1}$, remark indeed that
	\begin{align*}
		&\left| f(R+H) - f(R) \right| \nonumber \\
		&= \left| \frac1T\tr Y^\trans Y ( (Q^H)^2-Q^2) \right| \\
		&\leq \left| \frac1T\tr Y^\trans Y (Q^H-Q)Q^H \right| + \left| \frac1T\tr Y^\trans Y Q (Q^H-Q) \right| \\
		&= \left| \frac1T\tr Y^\trans Y Q^H ( (R+H)^\trans (R+H) - R^\trans R  ) QQ^H \right| \nonumber \\
		&+ \left| \frac1T\tr Y^\trans Y Q Q^H ( (R+H)^\trans (R+H) - R^\trans R) Q \right| \\
		&\leq \left| \frac1T\tr Y^\trans Y Q^H (R+H)^\trans H QQ^H \right| + \left| \frac1T\tr Y^\trans Y Q^H H^\trans R QQ^H \right| \nonumber \\
		&+ \left| \frac1T\tr Y^\trans Y Q Q^H (R+H)^\trans R Q \right| + \left| \frac1T\tr Y^\trans Y Q Q^H H^\trans R Q \right|.
	\end{align*}
	As $\|Q^H(R+H)^\trans\|=\sqrt{\|Q^H(R+H)^\trans(R+H)Q^H\|}$ and $\|RQ\|=\sqrt{\|QR^\trans RQ\|}$ are bounded and $\frac1T\tr Y^\trans Y$ is also bounded by Assumption~\ref{ass:growth}, this implies
	\begin{align*}
		\left| f(R+H) - f(R) \right| &\leq C \|H\| \leq C\|H\|_F
	\end{align*}
	for some $C>0$. The function $f$ is thus Lipschitz with parameter independent of $n$, which allows us to conclude using Lemma~\ref{lem:Lip_Sigma}.
\end{proof}

\bigskip

The aforementioned concentration results are the building blocks of the proofs of Theorem~\ref{th:EQ}--\ref{th:Etrain} which, \textcolor{black}{under all Assumptions~\ref{ass:W}--\ref{ass:growth}}, are established using standard random matrix approaches.

\subsection{Asymptotic Equivalents}

\subsubsection{First Equivalent for $\EE[Q]$}

This section is dedicated to a first characterization of $\EE[Q]$, in the \textcolor{black}{``simultaneously large''} $n,p,T$ regime. This preliminary step is classical in studying resolvents in random matrix theory as the direct comparison of $\EE[Q]$ to $\bar Q$ with the implicit $\delta$ may be cumbersome. To this end, let us thus define the intermediary deterministic matrix
\begin{align*}
	\tilde Q &= \left( \frac{n}T \frac{\Phi}{1+\alpha} + \gamma I_T \right)^{-1}
\end{align*}
with $\alpha \equiv \frac1T\tr \Phi \EE[Q_-]$, where we recall that $Q_-$ is a random matrix distributed as, say, $(\frac1T\Sigma^\trans \Sigma - \frac1T\sigma_1\sigma_1^\trans + \gamma I_T)^{-1}$. 

First note that, since $\frac1T\tr \Phi=\EE[\frac1T \|\sigma\|^2]$ and, from \eqref{eq:sigma_bound} \textcolor{black}{and Assumption~\ref{ass:growth}}, $P(\frac1T \|\sigma\|^2>t)\leq Ce^{-cnt^2}$ for all large $t$, we find that $\frac1T\tr \Phi=\int_0^\infty t^2P(\frac1T \|\sigma\|^2>t)dt\leq C'$ for some constant $C'$. Thus, $\alpha\leq \|\EE[Q_-]\|\frac1T\tr \Phi\leq \frac{C'}\gamma$ is uniformly bounded.

\bigskip

We will show here that $\|\EE[Q]-\tilde{Q}\|\to 0$ as $n\to\infty$ in the regime of Assumption~\ref{ass:growth}. \textcolor{black}{As the proof steps are somewhat classical, we defer to the appendix some classical intermediary lemmas (Lemmas~\ref{lem:resolvent_identity}--\ref{lem:norm_control}).} Using the resolvent identity, Lemma~\ref{lem:resolvent_identity}, we start by writing
\begin{align*}
	\EE[Q] - \tilde{Q} &= \EE\left[ Q \left( \frac{n}T \frac{\Phi}{1+\alpha} - \frac1T\Sigma^\trans \Sigma \right) \right] \tilde{Q} \\
	&= \EE[Q] \frac{n}T\frac{\Phi}{1+\alpha}\tilde{Q} - \EE\left[ Q \frac1T\Sigma^\trans \Sigma \right]\tilde{Q} \\
	&= \EE[Q] \frac{n}T\frac{\Phi}{1+\alpha}\tilde{Q} - \frac1T \sum_{i=1}^n \EE\left[ Q \sigma_i \sigma_i^\trans \right]\tilde{Q}
\end{align*}
which, from Lemma~\ref{lem:QQ-}, gives, for $Q_{-i}=(\frac1T\Sigma^\trans\Sigma-\frac1T\sigma_i\sigma_i^\trans+\gamma I_T)^{-1}$,
\begin{align*}
	\EE[Q] - \tilde{Q} &= \EE[Q] \frac{n}T\frac{\Phi}{1+\alpha}\tilde Q - \frac1T \sum_{i=1}^n \EE\left[ Q_{-i} \frac{\sigma_i \sigma_i^\trans}{1+\frac1T\sigma_i^\trans Q_{-i}\sigma_i} \right]\tilde{Q} \\
	&= \EE[Q] \frac{n}T\frac{\Phi}{1+\alpha}\tilde Q - \frac1{1+\alpha} \frac1T \sum_{i=1}^n \EE\left[ Q_{-i} \sigma_i \sigma_i^\trans \right]\tilde{Q} \nonumber \\
	&+ \frac1T \sum_{i=1}^n \EE\left[ \frac{Q_{-i} \sigma_i \sigma_i^\trans \left(\frac1T\sigma_i^\trans Q_{-i}\sigma_i-\alpha\right) }{(1+\alpha)(1+\frac1T\sigma_i^\trans Q_{-i}\sigma_i)} \right]\tilde{Q}.
\end{align*}
Note now, from the independence of $Q_{-i}$ and $\sigma_i\sigma_i^\trans$, that the second right-hand side expectation is simply $\EE[Q_{-i}]\Phi$. Also, exploiting Lemma~\ref{lem:QQ-} in reverse on the rightmost term, this gives
\begin{align}
	\label{eq:first_bound_on_QtQ}
	\EE[Q] - \tilde{Q} &= \frac1T\sum_{i=1}^n\frac{\EE[Q-Q_{-i}]\Phi}{1+\alpha}\tilde{Q} \nonumber \\
	&+ \frac1{1+\alpha} \frac1T \sum_{i=1}^n \EE\left[Q\sigma_i \sigma_i^\trans\tilde{Q} \left(\frac1T\sigma_i^\trans Q_{-i}\sigma_i-\alpha\right) \right].
\end{align}
It is convenient at this point to note that, since $\EE[Q]-\tilde{Q}$ is symmetric, we may write
\begin{align}
	\label{eq:bound_on_QtQ}
	\EE[Q] - \tilde{Q} &= \frac12\frac1{1+\alpha}\left( \frac1T\sum_{i=1}^n\left( \EE[Q-Q_{-i}]\Phi \tilde{Q} + \tilde{Q}\Phi \EE[Q-Q_{-i}] \right) \nonumber\right. \\
	&\left. + \frac1T \sum_{i=1}^n \EE\left[\left(Q\sigma_i \sigma_i^\trans\tilde{Q}+\tilde{Q}\sigma_i\sigma_i^\trans Q\right) \left(\frac1T\sigma_i^\trans Q_{-i}\sigma_i-\alpha\right) \right] \right).
\end{align}

We study the two right-hand side terms of \eqref{eq:bound_on_QtQ} independently. 

For the first term, since $Q-Q_{-i}=-Q\frac1T\sigma_i\sigma_i^\trans Q_{-i}$,
\begin{align*}
	\frac1T\sum_{i=1}^n\frac{\EE[Q-Q_{-i}]\Phi}{1+\alpha}\tilde{Q} &= \frac1{1+\alpha} \frac1T \EE\left[ Q \frac1T \sum_{i=1}^n\sigma_i\sigma_i^\trans Q_{-i} \right]\Phi \tilde{Q} \\
	&= \frac1{1+\alpha} \frac1T \EE\left[ Q \frac1T \sum_{i=1}^n\sigma_i\sigma_i^\trans Q \left(1+\frac1T\sigma_i^\trans Q_{-i}\sigma_i\right) \right]\Phi \tilde{Q}
\end{align*}
where we used again Lemma~\ref{lem:QQ-} in reverse. Denoting $D=\diag( \{ 1+\frac1T\sigma_i^\trans Q_{-i}\sigma_i \}_{i=1}^n )$, this can be compactly written
\begin{align*}
	\frac1T\sum_{i=1}^n\frac{\EE[Q-Q_{-i}]\Phi}{1+\alpha}\tilde{Q} &= \frac1{1+\alpha} \frac1T \EE\left[ Q \frac1T\Sigma^\trans D\Sigma Q\right]\Phi\tilde{Q}.
\end{align*}
Note at this point that, from Lemma~\ref{lem:norm_control}, $\|\Phi\tilde{Q}\|\leq (1+\alpha)\frac{T}n$ and 
\begin{align*}
	\left\| Q\frac1{\sqrt{T}}\Sigma^\trans \right\| &= \sqrt{\left\| Q\frac1T\Sigma^\trans\Sigma Q \right\|} \leq \gamma^{-\frac12}.
\end{align*}
Besides, by Lemma~\ref{lem:concentration_sQs} and the union bound,
\begin{align*}
	P \left( \max_{1\leq i\leq n} D_{ii} > 1+\alpha + t \right) &\leq  Cn \textcolor{black}{e^{-cn\min(t^2,t)}}
\end{align*}
for some $C,c>0$, so in particular, recalling that $\alpha\leq C'$ for some constant $C'>0$,
\textcolor{black}{
\begin{align*}
	\EE\left[\max_{1\leq i\leq n} D_{ii}\right] &= \int_0^{2(1+C')}  P\left( \max_{1\leq i\leq n} D_{ii} > t \right) dt + \int_{2(1+C')}^\infty P\left( \max_{1\leq i\leq n} D_{ii} > t \right) dt \\
	&\leq 2(1+C') + \int_{2(1+C')}^\infty Cn e^{-cn\min( (t-(1+C'))^2,t-(1+C'))}dt \\
	&=2(1+C') + \int_{1+C'}^\infty Cn e^{-cnt}dt \\
	&=2(1+C') + e^{-Cn(1+C')} = O(1).
	%&=\int_0^\infty P(\max_{1\leq i\leq n} D_{ii}>t)dt \\
	%&=\int_0^{\frac{\log(n)}{n}} P(\max_{1\leq i\leq n} D_{ii}>t)dt + \int_{\frac{\log(n)}{n}}^{\infty} Cn e^{-cnt}dt \\
	%&\leq \frac{\log(n)}{n}+ \frac{C}{c} e^{-c\log n} \\
	%&= O(1)
\end{align*}} 
%where we used the fact that $Cne^{-cnt}\leq 1\Leftrightarrow t \geq \frac{\log(Cn)}{cn}$
As a consequence of all the above (and of the boundedness of $\alpha$), we have that, for some $c>0$,
\begin{align}
	\label{eq:bound1}
	\frac1T \left\| \EE\left[ Q \frac1T\Sigma^\trans D\Sigma Q \right] \Phi\tilde{Q} \right\| &\leq \textcolor{black}{\frac{c}{n}}.
\end{align} 

Let us now consider the second right-hand side term of \eqref{eq:bound_on_QtQ}. Using the relation $ab^\trans + ba^\trans\preceq aa^\trans + bb^\trans$ in the order of Hermitian matrices (which unfolds from $(a-b)(a-b)^\trans\succeq 0$), we have, with $a=T^{\frac14}Q\sigma_i ( \frac1T\sigma_i^\trans Q_{-i}\sigma_i - \alpha)$ and $b=T^{-\frac14}\tilde{Q}\sigma_i$,
\begin{align*}
	&\frac1T\sum_{i=1}^n \EE\left[ \left( Q\sigma_i \sigma_i^\trans \tilde{Q} + \tilde{Q}\sigma_i\sigma^\trans Q \right) \left( \frac1T\sigma_i^\trans Q_{-i}\sigma_i - \alpha\right) \right] \nonumber \\
	&\preceq \frac1{\sqrt{T}}\sum_{i=1}^n \EE\left[ Q\sigma_i \sigma_i^\trans Q \left( \frac1T\sigma_i^\trans Q_{-i}\sigma_i - \alpha\right)^2 \right] + \frac1{T\sqrt{T}}\sum_{i=1}^n \EE\left[ \tilde{Q}\sigma_i\sigma_i^\trans \tilde{Q} \right] \\
	&= \sqrt{T} \EE\left[ Q \frac1T\Sigma^\trans D_2^2 \Sigma Q \right] + \frac{n}{T\sqrt{T}} \tilde{Q}\Phi\tilde{Q}
\end{align*}
where $D_2={\rm diag}( \{\frac1T\sigma_i^\trans Q_{-i}\sigma_i - \alpha\}_{i=1}^n )$. Of course, since we also have $-aa^\trans-bb^\trans\preceq ab^\trans + ba^\trans$ (from $(a+b)(a+b)^\trans \succeq 0$), we have symmetrically
\begin{align*}
	&\frac1T\sum_{i=1}^n \EE\left[ \left( Q\sigma_i \sigma_i^\trans \tilde{Q} + \tilde{Q}\sigma_i\sigma^\trans Q \right) \left( \frac1T\sigma_i^\trans Q_{-i}\sigma_i - \alpha\right) \right] \nonumber \\
	&\succeq -\sqrt{T} \EE\left[ Q \frac1T\Sigma^\trans D_2^2 \Sigma Q \right] - \frac{n}{T\sqrt{T}} \tilde{Q}\Phi\tilde{Q}.
\end{align*}

But from Lemma~\ref{lem:concentration_sQs}, 
\begin{align*}
	P\left( \left\| D_2\right\| > t n^{\varepsilon-\frac12} \right) &= P\left( \max_{1\leq i\leq n} \left| \frac1T\sigma_i^\trans Q_{-i}\sigma_i - \alpha \right| > tn^{\varepsilon-\frac12} \right) \\
	&\leq C n e^{-c \textcolor{black}{\min(n^{2\varepsilon}t^2,n^{\frac12 + \varepsilon}t)}}
\end{align*}
so that, with a similar reasoning as in the proof of Corollary~\ref{cor:moments_quadform},
\begin{align*}
	\left\|  \sqrt{T} \EE\left[ Q \frac1T\Sigma^\trans D_2^2 \Sigma Q \right] \right\| &\leq \sqrt{T} \EE \left[ \|D_2\|^2 \right] \leq C n^{\varepsilon'-\frac12}
\end{align*}
where we additionally used $\|Q\Sigma\|\leq \sqrt{T}$ in the first inequality.

Since in addition $\left\| \frac{n}{T\sqrt{T}} \tilde{Q}\Phi\tilde{Q} \right\|\leq Cn^{-\frac12}$, this gives
\begin{align*}
	\left\|\frac1T\sum_{i=1}^n \EE\left[ \left( Q\sigma_i \sigma_i^\trans \tilde{Q} + \tilde{Q}\sigma_i\sigma_i^\trans Q \right) \left( \frac1T\sigma_i^\trans Q_{-i}\sigma_i - \alpha\right) \right] \right\| &\leq Cn^{\varepsilon-\frac12}.
\end{align*}

Together with \eqref{eq:bound_on_QtQ}, we thus conclude that
\begin{align*}
	 \left\| \EE[Q] - \tilde{Q} \right\| &\leq Cn^{\varepsilon-\frac12}.
\end{align*}

\bigskip

Note in passing that we proved that
\begin{align*}
	\left\| \EE\left[ Q - Q_- \right] \right\| &= \frac{T}n \left\|\frac1T\sum_{i=1}^n \EE\left[Q-Q_{-i}\right] \right\| = \left\| \frac1n \EE\left[ Q \frac1T\Sigma^\trans D\Sigma Q \right] \right\| \leq \textcolor{black}{\frac{c}{n}}
\end{align*}
where the first equality holds by exchangeability arguments.

In particular,
\begin{align*}
	\alpha &= \frac1T\tr \Phi \EE[Q_-] =  \frac1T\tr \Phi \EE[Q] + \frac1T\tr \Phi (\EE[Q_-]-\EE[Q])
\end{align*}
where $|\frac1T\tr \Phi (\EE[Q_-]-\EE[Q])|\leq \textcolor{black}{\frac{c}{n}}$. And thus, by the previous result,
\begin{align*}
	\left| \alpha - \frac1T\tr \Phi \tilde{Q} \right| &\leq C n^{-\frac12+\varepsilon}\frac1T\tr \Phi.
\end{align*}
We have proved in the beginning of the section that $\frac1T\tr \Phi$ is bounded and thus we finally conclude that
\begin{align*}
	 \left\| \alpha - \frac1T\tr \Phi \tilde{Q} \right\| &\leq Cn^{\varepsilon-\frac12}.
\end{align*}

\subsubsection{Second Equivalent for $\EE[Q]$}

In this section, we show that $\EE[Q]$ can be approximated by the matrix $\bar{Q}$, which we recall is defined as
\begin{align*}
	\bar{Q} &= \left( \frac{n}T \frac{\Phi}{1+\delta} + \gamma I_T \right)^{-1}
\end{align*}
where $\delta>0$ is the unique positive solution to $\delta=\frac1T\tr \Phi\bar{Q}$. The fact that $\delta>0$ is well defined is quite standard and has already been proved several times for more elaborate models. Following the ideas of \citep{COU11d}, we may for instance use the framework of so-called standard interference functions \citep{YAT95} which claims that, if a map $f:[0,\infty)\to (0,\infty)$, $x\mapsto f(x)$, satisfies $x\geq x'\Rightarrow f(x)\geq f(x')$, $\forall a>1, af(x)>f(ax)$ and there exists $x_0$ such that $x_0\geq f(x_0)$, then $f$ has a unique fixed point \cite[Th~2]{YAT95}. It is easily shown that $\delta\mapsto \frac1T\tr \Phi\bar{Q}$ is such a map, so that $\delta$ exists and is unique.

\bigskip

To compare $\tilde{Q}$ and $\bar{Q}$, using the resolvent identity, Lemma~\ref{lem:resolvent_identity}, we start by writing
\begin{align*}
	\tilde{Q} - \bar{Q} &= (\alpha-\delta)\tilde{Q}\frac{n}T \frac{\Phi}{(1+\alpha)(1+\delta)}\bar{Q}
\end{align*}
from which
\begin{align*}
	\left| \alpha - \delta \right| &= \left|\frac1T\tr \Phi \left(\EE[Q_-] - \bar{Q}\right)\right| \\
	&\leq \left|\frac1T\tr \Phi \left(\tilde{Q} - \bar{Q}\right) \right| + cn^{-\frac12+\varepsilon} \\
	&= \left| \alpha-\delta \right| \frac1T\tr \frac{\Phi \tilde{Q}\frac{n}T \Phi \bar{Q}}{(1+\alpha)(1+\delta)} + cn^{-\frac12+\varepsilon}
\end{align*}
which implies that
\begin{align*}
	\left| \alpha - \delta \right|\left(1 - \frac1T\tr \frac{\Phi \tilde{Q}\frac{n}T \Phi \bar{Q}}{(1+\alpha)(1+\delta)} \right) &\leq cn^{-\frac12+\varepsilon}.
\end{align*}
It thus remains to show that 
\begin{align*}
	\limsup_n  \frac1T\tr \frac{\Phi \tilde{Q}\frac{n}T \Phi \bar{Q}}{(1+\alpha)(1+\delta)} < 1
\end{align*}
to prove that $|\alpha-\delta|\leq cn^{\varepsilon-\frac12}$. To this end, note that, by Cauchy--Schwarz's inequality,
\begin{align*}
	\frac1T\tr \frac{\Phi \tilde{Q}\frac{n}T \Phi \bar{Q}}{(1+\alpha)(1+\delta)} \leq \sqrt{\frac{n}{T(1+\delta)^2}\frac1T\tr \Phi^2 \bar{Q}^2 \cdot \frac{n}{T(1+\alpha)^2}\frac1T\tr \Phi^2 \tilde{Q}^2}
\end{align*}
so that it is sufficient to bound the limsup of both terms under the square root strictly by one. Next, remark that
\begin{align*}
	\delta &= \frac1T\tr \Phi\bar{Q} = \frac1T\tr \Phi\bar{Q}^2\bar{Q}^{-1} = \frac{n(1+\delta)}{T(1+\delta)^2} \frac1T\tr \Phi^2\bar{Q}^2 + \gamma \frac1T\tr \Phi \bar{Q}^2.
\end{align*}
In particular,
\begin{align*}
	\frac{n}{T(1+\delta)^2}\frac1T\tr \Phi^2 \bar{Q}^2 &= \frac{\delta \frac{n}{T(1+\delta)^2}\frac1T\tr \Phi^2 \bar{Q}^2}{(1+\delta)\frac{n}{T(1+\delta)^2}\frac1T\tr \Phi^2 \bar{Q}^2 + \gamma \frac1T\tr \Phi \bar{Q}^2} \leq \frac{\delta}{1+\delta}.
\end{align*}
But at the same time, since $\|(\frac{n}T \Phi + \gamma I_T)^{-1}\|\leq \gamma^{-1}$,
\begin{align*}
	\delta &\leq \frac1{\gamma T}\tr \Phi
\end{align*}
the limsup of which is bounded. We thus conclude that
\begin{align}
	\label{eq:bound_2ndorder_delta}
	\limsup_n \frac{n}{T(1+\delta)^2}\frac1T\tr \Phi^2 \bar{Q}^2 &<1.
\end{align}
Similarly, $\alpha$, which is known to be bounded, satisfies
\begin{align*}
	\alpha &= (1+\alpha) \frac{n}{T(1+\alpha)^2}\frac1T\tr \Phi^2 \tilde{Q}^2 + \gamma \frac1T\tr \Phi \tilde{Q}^2 + O(n^{\varepsilon-\frac12})
\end{align*}
and we thus have also
\begin{align*}
	\limsup_n \frac{n}{T(1+\alpha)^2}\frac1T\tr \Phi^2 \tilde{Q}^2 &<1
\end{align*}
which completes to prove that $|\alpha-\delta|\leq cn^{\varepsilon-\frac12}$.

As a consequence of all this,
\begin{align*}
	\| \tilde{Q} - \bar{Q} \| &= |\alpha-\delta| \cdot \left\| \frac{\tilde{Q}\frac{n}T \Phi \bar{Q}}{(1+\alpha)(1+\delta)} \right\| \leq c n^{-\frac12+\varepsilon}
\end{align*}
and we have thus proved that $\|\EE[Q]-\bar{Q}\|\leq c n^{-\frac12+\varepsilon}$ for some $c>0$.

\bigskip

From this result, along with Corollary~\ref{cor:ST}, we now have that
\begin{align*}
	&P\left( \left| \frac1T\tr Q - \frac1T\tr \bar Q \right| > t \right) \nonumber \\
	&\leq P\left( \left| \frac1T\tr Q - \frac1T\tr \EE[Q] \right| > t - \left|\frac1T\tr \EE[Q] - \frac1T\tr \bar Q \right| \right) \\
	&\leq C'e^{-c'n(t-cn^{-\frac12+\varepsilon})} \leq C'e^{-\frac12c'nt}
\end{align*}
for all large $n$. As a consequence, for all $\gamma>0$, $\frac1T\tr Q-\frac1T\tr \bar Q\to 0$ almost surely. As such, the difference $m_{\mu_n}-m_{\bar\mu_n}$ of Stieltjes transforms $m_{\mu_n}:\CC\setminus\RR^+\to \CC$, $z\mapsto \frac1T\tr (\frac1T\Sigma^\trans \Sigma-zI_T)^{-1}$ and $m_{\bar\mu_n}:\CC\setminus\RR^+\to \CC$, $z\mapsto \frac1T\tr (\frac{n}T\frac{\Phi}{1+\delta_z}-zI_T)^{-1}$ (with $\delta_z$ the unique Stieltjes transform solution to $\delta_z=\frac1T\tr \Phi (\frac{n}T\frac{\Phi}{1+\delta_z}-zI_T)^{-1}$) converges to zero for each $z$ in a subset of $\CC\setminus\RR^+$ having at least one accumulation point \textcolor{black}{(namely $\RR^-$)}, almost surely so (that is, on a probability set $\mathcal A_z$ with $P(\mathcal A_z)=1$). Thus, letting $\{z_k\}_{k=1}^\infty$ be a converging sequence strictly included in $\RR^-$, on the probability one space $\mathcal A=\cap_{k=1}^\infty\mathcal A_k$, $m_{\mu_n}(z_k)-m_{\bar\mu_n}(z_k)\to 0$ for all $k$. Now, $m_{\mu_n}$ is complex analytic on $\CC\setminus\RR^+$ and bounded on all compact subsets of $\CC\setminus\RR^+$. Besides, it was shown in \citep{SIL95,CHO95} that the function $m_{\bar\mu_n}$ is well-defined, complex analytic and bounded on all compact subsets of $\CC\setminus\RR^+$. As a result, on $\mathcal A$, $m_{\mu_n}-m_{\bar \mu_n}$ is complex analytic, bounded on all compact subsets of $\CC\setminus\RR^+$ and converges to zero on a subset admitting at least one accumulation point. Thus, by Vitali's convergence theorem \citep{TIT39}, with probability one, $m_{\mu_n}-m_{\bar \mu_n}$ converges to zero everywhere on $\CC\setminus\RR^+$. This implies, by \cite[Theorem~B.9]{SIL06}, that $\mu_n-\bar\mu_n\to 0$, \textcolor{black}{vaguely as a signed finite measure}, with probability one, and, \textcolor{black}{since $\bar\mu_n$ is a probability measure (again from the results of \citep{SIL95,CHO95})}, we have thus proved Theorem~\ref{th:lsd}.

\subsubsection{Asymptotic Equivalent for $\EE[QAQ]$, where $A$ is either $\Phi$ or symmetric of bounded norm}
\label{subsubsec:QAQ}
The evaluation of the second order statistics of the neural network under study requires, beside $\EE[Q]$, to evaluate the more involved form $\EE[QAQ]$, where $A$ is a symmetric matrix either equal to $\Phi$ or of bounded norm (so in particular $\|\bar QA\|$ is bounded). To evaluate this quantity, first write
\begin{align*}
	\EE[QAQ] &= \EE\left[\bar{Q}AQ\right] + \EE\left[ (Q-\bar{Q})AQ \right] \\
	&= \EE\left[\bar{Q}AQ\right] + \EE\left[ Q\left(\frac{n}T\frac{\Phi}{1+\delta} - \frac1T\Sigma^\trans \Sigma \right)\bar{Q}AQ \right] \\
	&= \EE\left[\bar{Q}AQ\right] +\frac{n}T\frac1{1+\delta} \EE\left[ Q\Phi \bar{Q}AQ \right] - \frac1T \sum_{i=1}^n \EE\left[ Q\sigma_i\sigma_i^\trans \bar{Q}AQ \right].
\end{align*}
Of course, since $QAQ$ is symmetric, we may write
\begin{align*}
	\EE[QAQ] &= \frac12 \left( \EE\left[\bar{Q}AQ + QA\bar{Q}\right] + \frac{n}T\frac1{1+\delta} \EE\left[ Q\Phi \bar{Q}AQ + QA\bar{Q}\Phi Q\right] \right. \nonumber \\ 
	&\left. - \frac1T \sum_{i=1}^n \EE\left[ Q\sigma_i\sigma_i^\trans \bar{Q}AQ + QA\bar{Q}\sigma_i\sigma_i^\trans Q \right] \right)
\end{align*}
which will reveal more practical to handle.

First note that, since $\left\|\EE[Q]-\bar{Q}\right\|\leq Cn^{\varepsilon-\frac12}$ and $A$ is such that $\|\bar{Q}A\|$ is bounded, $\|\EE[\bar{Q}AQ] -\bar{Q}A\bar{Q}]\|\leq \|\bar{Q}A\| \|\EE[Q]-\bar{Q}\|\leq C'n^{\varepsilon-\frac12}$, which provides an estimate for the first expectation. We next evaluate the last right-hand side expectation above. With the same notations as previously, from exchangeability arguments and using $Q=Q_--Q\frac1T\sigma\sigma^\trans Q_-$, observe that
\begin{align*}
	\frac1T\sum_{i=1}^n \EE\left[ Q\sigma_i\sigma_i^\trans \bar{Q}AQ \right] 
	&= \frac{n}T \EE\left[ Q\sigma\sigma^\trans \bar{Q}AQ \right] \\
	&= \frac{n}T \EE\left[ \frac{Q_-\sigma\sigma^\trans\bar{Q}AQ }{1 + \frac1T\sigma^\trans Q_-\sigma} \right] \\
	&= \frac{n}T \frac1{1+\delta} \EE\left[ Q_-\sigma\sigma^\trans \bar{Q}AQ\right] \nonumber \\ &+ \frac{n}T \frac1{1+\delta} \EE\left[ Q_-\sigma\sigma^\trans \bar{Q}AQ \frac{\delta - \frac1T\sigma^\trans Q_-\sigma}{1 + \frac1T\sigma^\trans Q_-\sigma} \right]
\end{align*}
which, reusing $Q=Q_--Q\frac1T\sigma\sigma^\trans Q_-$, is further decomposed as
\begin{align*}
	&\frac1T\sum_{i=1}^n \EE\left[ Q\sigma_i\sigma_i^\trans \bar{Q}AQ \right] \nonumber \\
	&= \frac{n}T \frac1{1+\delta} \EE\left[ Q_-\sigma\sigma^\trans \bar{Q}AQ_-\right] - \frac{n}{T^2}\frac1{1+\delta} \EE\left[ \frac{Q_-\sigma\sigma^\trans \bar{Q}AQ_- \sigma\sigma^\trans Q_-}{1+\frac1T\sigma^\trans Q_-\sigma}  \right]  \nonumber \\ 
	&+ \frac{n}T \EE\left[ Q_-\sigma\sigma^\trans \bar{Q}AQ_- \frac{\delta - \frac1T\sigma^\trans Q_-\sigma}{(1+\delta)\left(1 + \frac1T\sigma^\trans Q_-\sigma\right)} \right] \nonumber \\
	&- \frac{n}{T^2} \EE\left[ \frac{Q_-\sigma\sigma^\trans \bar{Q}AQ_- \sigma\sigma^\trans Q_-\left( \delta - \frac1T\sigma^\trans Q_-\sigma \right)}{(1+\delta)\left(1+\frac1T\sigma^\trans Q_-\sigma\right)^2}  \right] \\
	&= \frac{n}T \frac1{1+\delta} \EE\left[ Q_-\Phi\bar{Q}AQ_-\right] - \frac{n}{T}\frac1{1+\delta} \EE\left[ Q_-\sigma \sigma^\trans Q_- \frac{\frac1T  \sigma^\trans \bar{Q}AQ_- \sigma }{1+\frac1T\sigma^\trans Q_-\sigma}  \right]  \nonumber \\
	&+ \frac{n}T \EE\left[ Q_- \frac{\sigma\sigma^\trans\left( \delta - \frac1T\sigma^\trans Q_-\sigma \right)}{(1+\delta)\left(1 + \frac1T\sigma^\trans Q_-\sigma\right)} \bar{Q}AQ_- \right] \nonumber \\
	&- \frac{n}{T}\EE\left[ Q_-\sigma\sigma^\trans Q_- \frac{\frac1T\sigma^\trans \bar{Q}AQ_- \sigma \left( \delta - \frac1T\sigma^\trans Q_-\sigma \right)}{(1+\delta)\left(1+\frac1T\sigma^\trans Q_-\sigma\right)^2}  \right] \\
	&\equiv Z_1+Z_2+Z_3+Z_4
\end{align*}
(where in the previous to last line, we have merely reorganized the terms conveniently) and our interest is in handling $Z_1+Z_1^\trans+Z_2+Z_2^\trans+Z_3+Z_3^\trans+Z_4+Z_4^\trans$. Let us first treat term $Z_2$. Since $\bar{Q}AQ_-$ is bounded, by Lemma~\ref{lem:concentration_sQs}, $\frac1T\sigma^\trans \bar QAQ_-\sigma$ concentrates around $\frac1T\tr \Phi \bar QAE[Q_-]$; but, as $\|\Phi\bar Q\|$ is bounded, we also have $|\frac1T\tr \Phi \bar QAE[Q_-]-\frac1T\tr \Phi\bar QA\bar Q|\leq cn^{\varepsilon-\frac12}$. 
%on the subspace where $\sigma/\sqrt{T}$ remains bounded, note that $\frac{\Sigma_-}{\sqrt{T}}\mapsto \frac1T\sigma^\trans \bar{Q}AQ_- \sigma$ is Lipschitz with parameter independent of $n$. Thus, invoking Lemma~\ref{lem:Lip_Sigma}, $\frac1T\sigma^\trans \bar{Q}AQ_- \sigma$ is with exponentially high probability close to $\frac1T\sigma^\trans \bar{Q}A\EE[Q_-] \sigma$. Further, from the concentration result of Lemma~\ref{lem:concentration_quadform}, we next know that $\frac1T\sigma^\trans \bar{Q}A \EE[Q_-] \sigma$ is asymptotically close to $\frac1T\tr \Phi \bar{Q}A\EE[Q_-]=\frac1T\tr \Phi \bar{Q}A\bar{Q}+O(n^{\varepsilon-\frac12})$ also with probability exponentially close to one. 
We thus deduce, with similar arguments as previously, that
\begin{align*}
	-Q_-\sigma\sigma^\trans Q_- Cn^{\varepsilon-\frac12} 
	&\preceq Q_-\sigma\sigma^\trans Q_- \left[\frac{\frac1T\sigma^\trans \bar{Q}AQ_- \sigma}{1+\frac1T\sigma^\trans Q_-\sigma} - \frac{\frac1T\tr \Phi \bar{Q}A\bar{Q}}{1+\delta} \right] \\
	&\preceq Q_-\sigma\sigma^\trans Q_- Cn^{\varepsilon-\frac12}
\end{align*}
with probability exponentially close to one, in the order of symmetric matrices. Taking expectation and norms on both sides, and conditioning on the aforementioned event and its complementary, we thus have that 
\begin{align*}
	&\left\| \EE\left[ Q_-\sigma\sigma^\trans Q_- \frac{\frac1T\sigma^\trans \bar{Q}AQ_- \sigma}{1+\frac1T\sigma^\trans Q_-\sigma} \right] - \EE\left[ Q_- \Phi Q_-\right] \frac{\frac1T\tr \Phi \bar{Q}A\bar{Q}}{1+\delta} \right\| \nonumber \\
	&\leq \left\| \EE\left[ Q_- \Phi Q_-\right] \right\| Cn^{\varepsilon-\frac12} + C'ne^{-c \textcolor{black}{n^{\varepsilon'}}} \\
	&\leq \left\| \EE\left[ Q_- \Phi Q_-\right] \right\| C''n^{\varepsilon-\frac12}
\end{align*}
But, again by exchangeability arguments,
\begin{align*}
	\EE[Q_-\Phi Q_-] &= \EE[Q_-\sigma\sigma^\trans Q_-] = \EE\left[Q\sigma\sigma^\trans Q \left(1+\frac1T\sigma^\trans Q_-\sigma\right)^2\right] \\
	&= \frac{T}n \EE\left[ Q\frac1T\Sigma^\trans D^2 \Sigma Q \right]
\end{align*}
with $D=\diag( \{1+\frac1T\sigma_i^\trans Q_-\sigma_i\} )$, the operator norm of which is bounded \textcolor{black}{as O(1)}. So finally,
\begin{align*}
	\left\| \EE\left[ Q_-\sigma\sigma^\trans Q_- \frac{\frac1T\sigma^\trans \bar{Q}AQ_- \sigma}{1+\frac1T\sigma^\trans Q_-\sigma} \right] - \EE\left[ Q_- \Phi Q_-\right] \frac{\frac1T\tr \Phi \bar{Q}A\bar{Q}}{1+\delta} \right\| &\leq Cn^{\varepsilon-\frac12}.
\end{align*}

\medskip

We now move to term $Z_3+Z_3^\trans$. Using the relation $ab^\trans+ba^\trans\preceq aa^\trans+bb^\trans$,
\begin{align*}
	&\EE\left[(\delta-\frac1T\sigma^\trans Q_-\sigma) \frac{Q_-\sigma\sigma^\trans \bar{Q}AQ_- + Q_-A\bar{Q}\sigma\sigma^\trans Q_-}{(1+\frac1T\sigma^\trans Q_-\sigma)^2} \right] \nonumber \\
	&\preceq \sqrt{n} \EE\left[ \frac{(\delta-\frac1T\sigma^\trans Q_-\sigma)^2}{(1+\frac1T\sigma^\trans Q_-\sigma)^4} Q_-\sigma\sigma^\trans Q_- \right] + \frac1{\sqrt{n}}\EE\left[ Q_-A\bar{Q}\sigma\sigma^\trans \bar{Q}AQ_- \right] \\
	&=\sqrt{n} \frac{T}n \EE\left[ Q \frac1T\Sigma^\trans D_3^2 \Sigma Q \right] + \frac1{\sqrt{n}} \EE\left[ Q_-A\bar{Q}\Phi\bar{Q}AQ_-\right]
\end{align*}
and the symmetrical lower bound (equal to the opposite of the upper bound), where $D_3=\diag( (\delta-\frac1T\sigma_i^\trans Q_{-i}\sigma_i)/(1+\frac1T\sigma_i^\trans Q_{-i}\sigma_i) )$. For the same reasons as above, the first right-hand side term is bounded by $Cn^{\varepsilon-\frac12}$. As for the second term, for $A=I_T$, it is clearly bounded; for $A=\Phi$, using $\frac{n}T\frac{\bar{Q}\Phi}{1+\delta}=I_T-\gamma\bar{Q}$, $\EE[Q_-A\bar{Q}\Phi\bar{Q}AQ_-]$ can be expressed in terms of $\EE[Q_-\Phi Q_-]$ and $\EE[Q_-\bar{Q}^k\Phi Q_-]$ for $k=1,2$, all of which have been shown to be bounded (at most by $Cn^\varepsilon$). We thus conclude that
\begin{align*}
	\left\| \EE\left[\left(\delta-\frac1T\sigma^\trans Q_-\sigma\right) \frac{Q_-\sigma\sigma^\trans \bar{Q}AQ_- + Q_-A\bar{Q}\sigma\sigma^\trans Q_-}{(1+\frac1T\sigma^\trans Q_-\sigma)^2} \right] \right\| &\leq Cn^{\varepsilon-\frac12}.
\end{align*}

Finally, term $Z_4$ can be handled similarly as term $Z_2$ and is shown to be of norm bounded by $Cn^{\varepsilon-\frac12}$.

\bigskip

As a consequence of all the above, we thus find that
\begin{align*}
	\EE\left[ QAQ \right] &= \bar Q A\bar Q + \frac{n}T\frac{\EE\left[ Q\Phi\bar QAQ \right]}{1+\delta} - \frac{n}T\frac{\EE\left[ Q_-\Phi\bar QAQ_- \right]}{1+\delta} \nonumber \\
	&+ \frac{n}T\frac{\frac1T\tr \Phi\bar QA\bar Q}{(1+\delta)^2} \EE[Q_-\Phi Q_-] + O(n^{\varepsilon-\frac12}).
\end{align*}

It is attractive to feel that the sum of the second and third terms above vanishes. This is indeed verified by observing that, for any matrix $B$, 
\begin{align*}
	\EE\left[ Q B Q \right] - \EE\left[ Q_- B Q\right] &= \frac1T \EE\left[ Q\sigma\sigma^\trans Q_- B Q \right] \\
	&= \frac1T \EE\left[ Q \sigma\sigma^\trans Q B Q \left(1+\frac1T\sigma^\trans Q_-\sigma\right)\right] \\
	&= \frac1n \EE\left[ Q \frac1T\Sigma^\trans D\Sigma Q BQ \right]
\end{align*}
and symmetrically
\begin{align*}
	\EE\left[ Q B Q \right] - \EE\left[ Q B Q_-\right] &= \frac1n \EE\left[ Q B Q  \frac1T\Sigma^\trans D\Sigma Q \right]
\end{align*}
with $D=\diag(1+\frac1T\sigma_i^\trans Q_{-i}\sigma_i)$, and a similar reasoning is performed to control $\EE[Q_-BQ]-\EE[Q_-BQ_-]$ and $\EE[QBQ_-]-\EE[Q_-BQ_-]$. For $B$ bounded, $\|\EE[ Q \frac1T\Sigma^\trans D\Sigma Q BQ]\|$ is bounded as \textcolor{black}{$O(1)$}, and thus $\|\EE[QBQ]-\EE[Q_-BQ_-]\|$ is of order \textcolor{black}{$O(n^{-1})$}. So in particular, taking $A$ of bounded norm, we find that 
\begin{align*}
	\EE\left[ Q A Q \right] &= \bar Q A \bar Q + \frac{n}T\frac{\frac1T\tr \Phi\bar QA \bar Q}{(1+\delta)^2} \EE[Q_-\Phi Q_-] + O(n^{\varepsilon-\frac12}).
\end{align*}

Take now $B=\Phi$. Then, from the relation $AB^\trans+BA^\trans\preceq AA^\trans + BB^\trans$ in the order of symmetric matrices,
\begin{align*}
	&\left\|\EE\left[ Q \Phi Q \right] - \frac12 \EE\left[ Q_- \Phi Q + Q\Phi Q_-\right] \right\| \nonumber \\
	&= \frac1{2n} \left\| \EE\left[ Q \frac1T\Sigma^\trans D\Sigma Q \Phi Q + Q\Phi Q \frac1T\Sigma^\trans D\Sigma Q \right] \right\| \\
	&\leq \frac1{2n} \left( \left\| \EE\left[ Q \frac1T\Sigma^\trans D\Sigma Q \frac1T\Sigma^\trans D\Sigma Q \right] \right\| + \left\| \EE\left[  Q \Phi Q \Phi Q \right] \right\| \right).
\end{align*}

The first norm in the parenthesis is bounded by $Cn^{\varepsilon}$ and it thus remains to control the second norm. To this end, similar to the control of $\EE[Q\Phi Q]$, by writing $\EE[Q\Phi Q\Phi Q]=\EE[Q\sigma_1\sigma_1^\trans Q \sigma_2\sigma_2^\trans Q]$ for $\sigma_1,\sigma_2$ independent vectors with the same law as $\sigma$, and exploiting the exchangeability, we obtain after some calculus that $\EE[Q\Phi Q]$ can be expressed as the sum of terms of the form $\EE[Q_{++}\frac1T\Sigma_{++}^\trans D\Sigma_{++} Q_{++}]$ or $\EE[Q_{++}\frac1T\Sigma_{++}^\trans D\Sigma_{++} Q_{++} \frac1T\Sigma_{++}^\trans D_2\Sigma_{++} Q_{++} ]$ for $D,D_2$ diagonal matrices of norm bounded as \textcolor{black}{$O(1)$}, while $\Sigma_{++}$ and $Q_{++}$ are similar as $\Sigma$ and $Q$, only for $n$ replaced by $n+2$. All these terms are bounded as \textcolor{black}{$O(1)$} and we finally obtain that $\EE[Q\Phi Q\Phi Q]$ is bounded and thus
\begin{align*}
	\left\|\EE\left[ Q \Phi Q \right] - \frac12 \EE\left[ Q_- \Phi Q + Q\Phi Q_-\right] \right\| &\leq \textcolor{black}{\frac{C}{n}}.
\end{align*}

\bigskip
	
With the additional control on $Q\Phi Q_--Q_-\Phi Q_-$ and $Q_-\Phi Q-Q_-\Phi Q_-$, together, this implies that $\EE[Q\Phi Q]=\EE[Q_-\Phi Q_-]+O_{\|\cdot\|}(\textcolor{black}{n^{-1}})$. Hence, for $A=\Phi$, exploiting the fact that $\frac{n}T\frac1{1+\delta}\Phi\bar{Q}\Phi=\Phi-\gamma \bar{Q}\Phi$, we have the simplification
\begin{align*}
	\EE\left[ Q\Phi Q \right] &= \bar Q \Phi \bar Q + \frac{n}T\frac{\EE\left[ Q\Phi\bar Q \Phi Q \right]}{1+\delta} - \frac{n}T\frac{\EE\left[ Q_-\Phi\bar Q\Phi Q_- \right]}{1+\delta} \nonumber \\
	&+ \frac{n}T\frac{\frac1T\tr \Phi^2\bar Q^2}{(1+\delta)^2} \EE[Q_-\Phi Q_-] + O_{\|\cdot\|}(n^{\varepsilon-\frac12}) \\
	&= \bar Q \Phi \bar Q + \frac{n}T\frac{\frac1T\tr \Phi^2\bar Q^2}{(1+\delta)^2} \EE[Q\Phi Q] + O_{\|\cdot\|}(n^{\varepsilon-\frac12}).
\end{align*}
or equivalently
\begin{align*}
	\EE\left[ Q\Phi Q \right]\left( 1 - \frac{n}T\frac{\frac1T\tr \Phi^2\bar Q^2}{(1+\delta)^2} \right) &= \bar Q \Phi \bar Q + O_{\|\cdot\|}(n^{\varepsilon-\frac12}).
\end{align*}
We have already shown in \eqref{eq:bound_2ndorder_delta} that $\limsup_n \frac{n}T\frac{\frac1T\tr \Phi^2\bar Q^2}{(1+\delta)^2}<1$ and thus 
\begin{align*}
	\EE\left[ Q\Phi Q \right] &= \frac{\bar Q \Phi \bar Q}{1 - \frac{n}T\frac{\frac1T\tr \Phi^2\bar Q^2}{(1+\delta)^2}} + O_{\|\cdot\|}(n^{\varepsilon-\frac12}).
\end{align*}

So finally, for all $A$ of bounded norm,
\begin{align*}
	\EE\left[ Q A Q \right] &= \bar Q A \bar Q + \frac{n}T\frac{\frac1T\tr \Phi\bar QA \bar Q}{(1+\delta)^2} \frac{\bar Q \Phi \bar Q}{1 - \frac{n}T\frac{\frac1T\tr \Phi^2\bar Q^2}{(1+\delta)^2}} + O(n^{\varepsilon-\frac12})
\end{align*}
which proves immediately Proposition~\ref{prop:QAQ} and Theorem~\ref{th:Etrain}.

\subsection{Derivation of $\Phi_{ab}$}

\subsubsection{Gaussian $w$}

In this section, we evaluate the terms $\Phi_{ab}$ provided in Table~\ref{tab:Phi}. The proof for the term corresponding to $\sigma(t)={\rm erf}(t)$ can be already be found in \cite[Section~3.1]{WIL98} and is not recalled here. For the other functions $\sigma(\cdot)$, we follow a similar approach as in \citep{WIL98}, as detailed next.

\bigskip

The evaluation of $\Phi_{ab}$ for $w\sim \mathcal N(0,I_p)$ requires to estimate
\begin{align*}
	\mathcal I &\equiv (2\pi)^{-\frac{p}2}\int_{\RR^p} \sigma(w^\trans a)\sigma(w^\trans b)e^{-\frac12\|w\|^2}dw.
\end{align*}
Assume that $a$ and $b$ and not linearly dependent. It is convenient to observe that this integral can be reduced to a two-dimensional integration by considering the basis $e_1,\ldots,e_p$ defined (for instance) by
\begin{align*}
	e_1 = \frac{a}{\|a\|},&\quad e_2 = \frac{\frac{b}{\|b\|}-\frac{a^\trans b}{\|a\|\|b\|} \frac{a}{\|a\|}}{\sqrt{1-\frac{(a^\trans b)^2}{\|a\|^2\|b\|^2}}}
\end{align*}
and $e_3,\ldots,e_p$ any completion of the basis. By letting $w=\tilde{w}_1e_1+\ldots+\tilde{w}_pe_p$ and $a=\tilde a_1e_1$ ($\tilde{a}_1=\|a\|$), $b=\tilde b_1e_1+\tilde b_2e_2$ (where $\tilde b_1=\frac{a^\trans b}{\|a\|}$ and $\tilde b_2=\|b\|\sqrt{1-\frac{(a^\trans b)^2}{\|a\|^2\|b\|^2}}$), this reduces $\mathcal I$ to
\begin{align*}
	\mathcal I &= \frac1{2\pi} \int_\RR\int_\RR \sigma(\tilde{w}_1\tilde{a}_1)\sigma(\tilde{w}_1\tilde{b}_1+\tilde{w}_2\tilde{b}_2) e^{-\frac12(\tilde w_1^2+\tilde w_2^2)}d\tilde w_1 d\tilde w_2.
\end{align*}
Letting $\tilde{w}=[\tilde w_1,\tilde w_2]^\trans$, $\tilde a=[\tilde a_1,0]^\trans$ and $\tilde b=[\tilde b_1,\tilde b_2]^\trans$, this is conveniently written as the two-dimensional integral
\begin{align*}
	\mathcal I &= \frac1{2\pi}\int_{\RR^2} \sigma(\tilde w^\trans \tilde a)\sigma(\tilde w^\trans \tilde b)e^{-\frac12\|\tilde w\|^2}d\tilde w.
\end{align*}

The case where $a$ and $b$ would be linearly dependent can then be obtained by continuity arguments.

\paragraph{The function $\sigma(t)=\max(t,0)$}

For this function, we have
\begin{align*}
	\mathcal I &= \frac1{2\pi}\int_{ \min(\tilde{w}^\trans\tilde a,\tilde{w}^\trans\tilde b)\geq 0} \tilde w^\trans \tilde a\cdot \tilde w^\trans \tilde b\cdot e^{-\frac12\|\tilde w\|^2}d\tilde w.
\end{align*}
Since $\tilde{a}=\tilde{a}_1e_1$, a simple geometric representation lets us observe that
\begin{align*}
	\left\{\tilde w~|~\min(\tilde{w}^\trans\tilde a,\tilde{w}^\trans\tilde b)\geq 0\right\} &= \left\{ r\cos(\theta)e_1+r\sin(\theta)e_2~|~r\geq 0,~\theta\in[\theta_0-\frac{\pi}2,\frac{\pi}2]\right\}
\end{align*}
where we defined $\theta_0\equiv \arccos\left( \frac{\tilde{b}_1}{\|\tilde b\|}\right)=-\arcsin\left( \frac{\tilde{b}_1}{\|\tilde b\|}\right)+\frac\pi2$. We may thus operate a polar coordinate change of variable (with inverse Jacobian determinant equal to $r$) to obtain
\begin{align*}
	\mathcal I &= \frac1{2\pi}\int_{\theta_0-\frac{\pi}2}^{\frac{\pi}2} \int_{\RR^+} \left(r\cos(\theta) \tilde{a}_1\right) \left( r\cos(\theta) \tilde{b}_1 + r\sin(\theta) \tilde{b}_2 \right) r e^{-\frac12r^2} d\theta dr \\
	&= \tilde{a}_1 \frac1{2\pi} \int_{\theta_0-\frac{\pi}2}^{\frac{\pi}2} \cos(\theta)\left( \cos(\theta) \tilde{b}_1 + \sin(\theta) \tilde{b}_2 \right) d\theta \int_{\RR^+} r^3e^{-\frac12r^2}dr.
\end{align*}
With two integration by parts, we have that $\int_{\RR^+} r^3e^{-\frac12r^2}dr=2$. Classical trigonometric formulas also provide
\begin{align*}
	\int_{\theta_0-\frac{\pi}2}^{\frac{\pi}2} \cos(\theta)^2 d\theta &= \frac12\left(\pi-\theta_0\right) + \frac12\sin(2\theta_0) \nonumber \\ 
	&= \frac12 \left(\pi-\arccos\left( \frac{\tilde{b}_1}{\|\tilde b\|}\right) + \frac{\tilde b_1}{\|\tilde b\|}\frac{\tilde b_2}{\|\tilde b\|}\right) \\
	\int_{\theta_0-\frac{\pi}2}^{\frac{\pi}2} \cos(\theta)\sin(\theta) d\theta &= \frac12\sin^2(\theta_0) = \frac12 \left(\frac{\tilde b_2}{\|\tilde b\|}\right)^2
\end{align*}
where we used in particular $\sin(2\arccos(x))=2x\sqrt{1-x^2}$. Altogether, this is after simplification and replacement of $\tilde a_1$, $\tilde b_1$ and $\tilde b_2$,
\begin{align*}
	\mathcal I &= \frac1{2\pi} \|a\| \|b\| \left( \sqrt{1-\angle(a,b)^2} + \angle(a,b) \arccos(-\angle(a,b)) \right).
\end{align*}
It is worth noticing that this may be more compactly written as
\begin{align*}
	\mathcal I &= \frac1{2\pi} \|a\| \|b\| \int_{-1}^{\angle(a,b)} \arccos(-x)dx.
\end{align*}
which is minimum for $\angle(a,b)\to -1$ (since $\arccos(-x)\geq 0$ on $[-1,1]$) and takes there the limiting value zero. Hence $\mathcal I>0$ for $a$ and $b$ not linearly dependent.

For $a$ and $b$ linearly dependent, we simply have $\mathcal I=0$ for $\angle(a,b)=-1$ and $\mathcal I=\frac12\|a\| \|b\|$ for $\angle(a,b)=1$.

\paragraph{The function $\sigma(t)=|t|$}

Since $|t|=\max(t,0)+\max(-t,0)$, we have
\begin{align*}
	| w^\trans a | \cdot | w^\trans b| &=\max(w^\trans a,0)\max(w^\trans b,0)+\max(w^\trans (-a),0)\max(w^\trans (-b),0) \nonumber \\
	&+\max(w^\trans (-a),0)\max(w^\trans b,0)+\max(w^\trans a,0)\max(w^\trans (-b),0). 
\end{align*}
Hence, reusing the results above, we have here
\begin{align*}
	\mathcal I &= \frac{\|a\| \|b\|}{2\pi} \left( 4\sqrt{1-\angle(a,b)^2} + 2\angle(a,b)\acos(-\angle(a,b)) - 2 \angle(a,b)\acos(\angle(a,b)) \right).
\end{align*}
Using the identity $\acos(-x)-\acos(x)=2\asin(x)$ provides the expected result.

\paragraph{The function $\sigma(t)=1_{t\geq 0}$}

With the same notations as in the case $\sigma(t)=\max(t,0)$, we have to evaluate
\begin{align*}
	\mathcal I &= \frac1{2\pi} \int_{ \min(\tilde w^\trans \tilde a,\tilde w^\trans \tilde b)\geq 0 } e^{-\frac12\|\tilde w\|^2}d\tilde w. 
\end{align*}
After a polar coordinate change of variable, this is
\begin{align*}
	\mathcal I &= \frac1{2\pi} \int_{\theta_0-\frac{\pi}2}^{\frac{\pi}2} d\theta \int_{\RR^+} r e^{-\frac12r^2} dr = \frac12 - \frac{\theta_0}{2\pi}
\end{align*}
from which the result unfolds.

\paragraph{The function $\sigma(t)={\rm sign}(t)$}

Here it suffices to note that ${\rm sign}(t)=1_{t\geq 0}-1_{-t\geq 0}$ so that
\begin{align*}
	\sigma(w^\trans a)\sigma(w^\trans b) &= 1_{w^\trans a\geq 0}1_{w^\trans b\geq 0} + 1_{w^\trans (-a)\geq 0}1_{w^\trans (-b)\geq 0} \nonumber \\
	&- 1_{w^\trans (-a)\geq 0}1_{w^\trans b\geq 0} - 1_{w^\trans a\geq 0}1_{w^\trans (-b)\geq 0}
\end{align*}
and to apply the result of the previous section, with either $(a,b)$, $(-a,b)$, $(a,-b)$ or $(-a,-b)$. Since $\arccos(-x)=-\arccos(x)+\pi$, we conclude that
\begin{align*}
	\mathcal I &= (2\pi)^{-\frac{p}2} \int_{\RR^p} {\rm sign}(w^\trans a) {\rm sign}(w^\trans b) e^{-\frac12\|w\|^2}dw = 1 - \frac{2\theta_0}{\pi}.
\end{align*}

\paragraph{The functions $\sigma(t)=\cos(t)$ and $\sigma(t)=\sin(t)$.}

Let us first consider $\sigma(t)=\cos(t)$. We have here to evaluate
\begin{align*}
	\mathcal I &= \frac1{2\pi} \int_{\RR^2} \cos\left( \tilde w^\trans \tilde a \right)\cos\left( \tilde w^\trans \tilde b \right) e^{-\frac12\|\tilde w\|^2}d\tilde w \\
	&= \frac1{8\pi} \int_{\RR^2} \left( e^{\imath \tilde w^\trans \tilde a} + e^{-\imath \tilde w^\trans \tilde a} \right)\left( e^{\imath \tilde w^\trans \tilde b} + e^{-\imath \tilde w^\trans \tilde b} \right) e^{-\frac12\|\tilde w\|^2}d\tilde w
\end{align*}
which boils down to evaluating, for $d\in\{\tilde a+\tilde b,\tilde a-\tilde b,-\tilde a+\tilde b,-\tilde a-\tilde b\}$, the integral
\begin{align*}
	e^{-\frac12\|d\|^2} \int_{\RR^2} e^{-\frac12\|\tilde w-\imath d\|^2} d\tilde w &= (2\pi) e^{-\frac12\|d\|^2}.
\end{align*}
Altogether, we find
\begin{align*}
	\mathcal I &= \frac12\left( e^{-\frac12\|a+b\|^2} + e^{-\frac12\|a-b\|^2} \right) = e^{-\frac12(\|a\|+\|b\|^2)}{\rm cosh}(a^\trans b).
\end{align*}

For $\sigma(t)=\sin(t)$, it suffices to appropriately adapt the signs in the expression of $\mathcal I$ (using the relation $\sin(t)=\frac1{2\imath}(e^{t}+e^{-t})$) to obtain in the end
\begin{align*}
	\mathcal I &= \frac12\left( e^{-\frac12\|a+b\|^2} + e^{-\frac12\|a-b\|^2} \right) = e^{-\frac12(\|a\|+\|b\|^2)}{\rm sinh}(a^\trans b)
\end{align*}
as desired.

\subsection{Polynomial $\sigma(\cdot)$ and generic $w$}

In this section, we prove Equation~\ref{eq:Phi_poly2} for $\sigma(t)=\zeta_2t^2+\zeta_1t+\zeta_0$ and $w\in\RR^p$ a random vector with independent and identically distributed entries of zero mean and moment of order $k$ equal to $m_k$. The result is based on standard combinatorics. We are to evaluate
\begin{align*}
	\Phi_{ab} &= \EE\left[ \left(\zeta_2 (w^\trans a)^2+\zeta_1 w^\trans a + \zeta_0 \right)\left( \zeta_2 (w^\trans b)^2+\zeta_1 w^\trans b + \zeta_0 \right) \right].
\end{align*}
After development, it appears that one needs only assess, for say vectors $c,d\in\RR^p$ that take values in $\{a,b\}$, the moments 
\begin{align*}
	\EE[(w^\trans c)^2(w^\trans d)^2] &= \sum_{i_1i_2j_1j_2} c_{i_1}c_{i_2}d_{j_1}d_{j_2} \EE[w_{i_1}w_{i_2}w_{j_1}w_{j_2}] \\
	&= \sum_{i_1} m_4 c_{i_1}^2d_{i_1}^2 + \sum_{i_1\neq j_1} m_2^2 c_{i_1}^2d_{j_1}^2 + 2 \sum_{i_1\neq i_2} m_2^2 c_{i_1}d_{i_1}c_{i_2}d_{i_2} \\
	&= \sum_{i_1} m_4 c_{i_1}^2d_{i_1}^2 + \left(\sum_{i_1j_1} - \sum_{i_1=j_1} \right) m_2^2 c_{i_1}^2d_{j_1}^2 \nonumber \\ 
	& + 2 \left(\sum_{i_1i_2} - \sum_{i_1\neq i_2} \right) m_2^2 c_{i_1}d_{i_1}c_{i_2}d_{i_2} \\
	&= m_4 (c^2)^\trans (d^2) + m_2^2(\|c\|^2\|d\|^2 - (c^2)^\trans (d^2)) \nonumber \\
	&+ 2 m_2^2 \left( (c^\trans d)^2 - (c^2)^\trans (d^2) \right) \\
	&= (m_4-3m_2^2) (c^2)^\trans (d^2) + m_2^2 \left( \|c\|^2\|d\|^2 + 2 (c^\trans d)^2 \right) \\
	\EE\left[ (w^\trans c)^2(w^\trans d) \right] &= \sum_{i_1i_2j} c_{i_1}c_{i_2}d_j \EE[w_{i_1}w_{i_2}w_j] = \sum_{i_1} m_3 c_{i_1}^2d_{i_1} = m_3 (c^2)d \\
	\EE\left[ (w^\trans c)^2 \right] &= \sum_{i_1i_2} c_{i_1}c_{i_2} \EE[w_{i_1}w_{i_2}] = m_2 \|c\|^2
\end{align*}
where we recall the definition $(a^2)=[a_1^2,\ldots,a_p^2]^\trans$. Gathering all the terms for appropriate selections of $c,d$ leads to \eqref{eq:Phi_poly2}.

\subsection{Heuristic derivation of Conjecture~\ref{claim:Etest}}
\label{sec:proof_Etest}

Conjecture~\ref{claim:Etest} essentially follows as an aftermath of Remark~\ref{rem:Sigma}. We believe that, similar to $\Sigma$, $\hat\Sigma$ is expected to be of the form $\hat\Sigma=\hat\Sigma^\circ+\hat{\bar\sigma}1_{\hat T}^\trans$, where $\hat{\bar\sigma}=\EE[\sigma(w^\trans\hat X)]^\trans$, with $\|\frac{\hat\Sigma^\circ}{\sqrt{T}}\|\leq n^{\varepsilon}$ with high probability. Besides, if $X,\hat X$ were chosen as constituted of Gaussian mixture vectors, with non-trivial growth rate conditions as introduced in \citep{COU16}, it is easily seen that $\bar\sigma=c1_p+v$ and $\hat{\bar\sigma}=c1_p+\hat v$, for some constant $c$ and $\|v\|,\|\hat v\|=O(1)$. 

This subsequently ensures that $\Phi_{X\hat X}$ and $\Phi_{\hat X\hat X}$ would be of a similar form $\Phi_{X\hat X}^\circ+\bar{\sigma}\hat{\bar{\sigma}}^\trans$ and $\Phi_{\hat X\hat X}^\circ+\hat{\bar{\sigma}}\hat{\bar{\sigma}}^\trans$ with $\Phi_{X\hat X}^\circ$ and $\Phi_{\hat X\hat X}^\circ$ of bounded norm. These facts, that would require more advanced proof techniques, let envision the following heuristic derivation for Conjecture~\ref{claim:Etest}.

\bigskip

Recall that our interest is on the test performance $E_{\rm test}$ defined as 
\begin{align*}
E_{\rm test}=\frac1{\hat T}\left\|\hat Y^\trans-\hat \Sigma^\trans\beta\right\|_F^2
\end{align*}
which may be rewritten as 
\begin{align}
E_{\rm test}&=\frac1{\hat T}\tr\left(\hat Y\hat Y^\trans\right)-\frac{2}{T\hat T}\tr\left(YQ\Sigma^\trans\hat \Sigma\hat Y^\trans\right)+\frac1{T^2\hat T}\tr\left(YQ\Sigma^\trans\hat \Sigma\hat \Sigma^\trans\Sigma QY^\trans\right)\nonumber \\
&\equiv Z_1-Z_2+Z_3.\label{eq:E-test}
\end{align}
If $\hat\Sigma=\hat\Sigma^\circ+\hat{\bar\sigma}1_{\hat T}^\trans$ follows the aforementioned claimed operator norm control, reproducing the steps of Corollary~\ref{cor:Etrain} leads to a similar concentration for $E_{\rm test}$, which we shall then admit. We are therefore left to evaluating $\EE[Z_2]$ and $\EE[Z_3]$.

\bigskip

We start with the term $\EE[Z_2]$, which we expand as
\begin{align*}
\EE[Z_2]&=\frac{2}{T\hat T}\EE\left[\tr(YQ\Sigma^\trans\hat \Sigma\hat Y^\trans)\right]=\frac{2}{T\hat T}\sum_{i=1}^n\left[\tr(YQ\sigma_i\hat \sigma_i^\trans\hat Y^\trans)\right]\\
&=\frac{2}{T\hat T}\sum_{i=1}^n\EE\left[\tr\left(\frac{YQ_{-i}\sigma_i\hat \sigma_i^\trans\hat Y^\trans}{1+\frac1T\sigma_i^\trans Q_{-i}\sigma_i}\right)\right]\\
&=\frac{2}{T\hat T}\frac1{1+\delta}\sum_{i=1}^n \EE\left[\tr\left(YQ_{-i}\sigma_i\hat \sigma_i^\trans\hat Y^\trans\right)\right]\\
&+\frac{2}{T\hat T}\frac1{1+\delta}\sum_{i=1}^n \EE\left[\tr\left(YQ_{-i}\sigma_i\hat \sigma_i^\trans\hat Y^\trans\right)\frac{\delta-\frac1T\sigma_i^\trans Q_{-i}\sigma_i}{1+\frac1T\sigma_i^\trans Q_{-i}\sigma_i}\right]\\
&=\frac{2n}{T\hat T}\frac1{1+\delta}\tr\left(Y\EE[Q_{-}]\Phi_{X\hat X}\hat Y^\trans\right)+\frac{2}{T\hat T}\frac1{1+\delta}\EE\left[\tr\left(YQ\Sigma^\trans{D}\hat \Sigma\hat Y^\trans\right)\right]\\
&\equiv Z_{21}+Z_{22}
\end{align*}
with $D=\diag(\{\delta-\frac1T\sigma_i^\trans Q_{-i}\sigma_i\})$, the operator norm of which is bounded by $n^{\varepsilon-\frac12}$ with high probability. Now, observe that, again with the assumption that $\hat\Sigma=\hat\Sigma^\circ+\bar\sigma 1_{\hat T}^\trans$ with controlled $\hat\Sigma^\circ$, $Z_{22}$ may be decomposed as
\begin{align*}
	\frac{2}{T\hat T}\frac1{1+\delta}\EE\left[\tr\left(YQ\Sigma^\trans{D}\hat \Sigma\hat Y^\trans\right)\right] &=\frac{2}{T\hat T}\frac1{1+\delta}\EE\left[\tr\left(YQ\Sigma^\trans{D}\hat\Sigma^\circ \hat Y^\trans\right)\right] \nonumber \\
	&+ \frac{2}{T\hat T}\frac1{1+\delta}1_{\hat T}^\trans \hat Y^\trans \EE\left[ YQ\Sigma^\trans D\bar\sigma \right].
\end{align*}
In the display above, the first right-hand side term is now of order $O(n^{\varepsilon-\frac12})$. As for the second right-hand side term, note that $D\bar\sigma$ is a vector of independent and identically distributed zero mean and variance $O(n^{-1})$ entries; while note formally independent of $YQ\Sigma^\trans$, it is nonetheless expected that this independence ``weakens'' asymptotically (a behavior several times observed in linear random matrix models), so that one expects by central limit arguments that the second right-hand side term be also of order $O(n^{\varepsilon-\frac12})$.

This would thus result in
\begin{align*}
\EE[Z_2]&=\frac{2n}{T\hat T}\frac1{1+\delta}\tr\left(Y\EE[Q_{-}]\Phi_{X\hat X}\hat Y^\trans\right)+O(n^{\varepsilon-\frac12})\\
&=\frac{2n}{T\hat T}\frac1{1+\delta}\tr\left(Y\bar Q\Phi_{X\hat X}\hat Y^\trans\right)+O(n^{\varepsilon-\frac12})\\
&=\frac{2}{\hat T}\tr\left(Y\bar Q\Psi_{X\hat X}\hat Y^\trans\right)+O(n^{\varepsilon-\frac12})
\end{align*}
where we used $\|\EE[Q_{-}]-\bar Q\|\leq Cn^{\varepsilon-\frac12}$ and the definition $\Psi_{X\hat X}=\frac{n}{T}\frac{\Phi_{X\hat X}}{1+\delta}$.

\bigskip

We then move on to $\EE[Z_3]$ of Equation \eqref{eq:E-test}, which can be developed as
\begin{align*}
\EE[Z_3]&=\frac1{T^2\hat T}\EE\left[\tr\left(YQ\Sigma^\trans\hat \Sigma\hat \Sigma^\trans\Sigma QY^\trans\right)\right] \\
&=\frac1{T^2\hat T}\sum_{i,j=1}^n\EE\left[\tr\left(YQ\sigma_i\hat \sigma_i^\trans\hat \sigma_j\sigma_j^\trans QY^\trans\right)\right]\\
&=\frac1{T^2\hat T}\sum_{i,j=1}^n\EE\left[\tr\left(Y\frac{Q_{-i}\sigma_i\hat \sigma_i^\trans}{1+\frac1T\sigma_i^\trans Q_{-i}\sigma_i}\frac{\hat \sigma_j\sigma_j^\trans Q_{-j}}{1+\frac1T\sigma_j^\trans Q_{-j}\sigma_j}Y^\trans\right)\right]\\
&=\frac1{T^2\hat T}\sum_{i=1}^n\sum_{j\neq i}\EE\left[\tr\left(Y\frac{Q_{-i}\sigma_i\hat \sigma_i^\trans}{1+\frac1T\sigma_i^\trans Q_{-i}\sigma_i}\frac{\hat \sigma_j\sigma_j^\trans Q_{-j}}{1+\frac1T\sigma_j^\trans Q_{-j}\sigma_j}Y^\trans\right)\right]\\
&+\frac1{T^2\hat T}\sum_{i=1}^n\EE\left[\tr\left(Y\frac{Q_{-i}\sigma_i\hat \sigma_i^\trans\hat \sigma_i\sigma_i^\trans Q_{-i}}{(1+\frac1T\sigma_i^\trans Q_{-i}\sigma_i)^2}Y^\trans\right)\right]\equiv Z_{31}+Z_{32}.
\end{align*}

In the term $Z_{32}$, reproducing the proof of Lemma~\ref{lem:concentration_quadform} with the condition $\|\hat X\|$ bounded, we obtain that $\frac{\hat \sigma_i^\trans\hat \sigma_i}{\hat T}$ concentrates around $\frac1{\hat T}\tr\Phi_{\hat X\hat X}$, which allows us to write 
\begin{align*}
Z_{32}&=\frac1{T^2\hat T}\sum_{i=1}^n\EE\left[\tr\left(Y\frac{Q_{-i}\sigma_i\tr(\Phi_{\hat X\hat X})\sigma_i^\trans Q_{-i}}{(1+\frac1T\sigma_i^\trans Q_{-i}\sigma_i)^2}Y^\trans\right)\right]\\
&+\frac1{T^2\hat T}\sum_{i=1}^n\EE\left[\tr\left(Y\frac{Q_{-i}\sigma_i\left(\hat \sigma_i^\trans\hat \sigma_i-\tr\Phi_{\hat T}\right)\sigma_i^\trans Q_{-i}}{(1+\frac1T\sigma_i^\trans Q_{-i}\sigma_i)^2}Y^\trans\right)\right]\\
&=\frac1{T^2}\frac{\tr(\Phi_{\hat X\hat X})}{\hat T}\sum_{i=1}^n\EE\left[\tr\left(Y\frac{Q_{-i}\sigma_i\sigma_i^\trans Q_{-i}}{(1+\frac1T\sigma_i^\trans Q_{-i}\sigma_i)^2}Y^\trans\right)\right]\\
&+\frac1{T^2}\sum_{i=1}^n\EE\left[\tr\left(YQ\sigma_i\left(\frac{\hat \sigma_i^\trans\hat \sigma_i-\tr\Phi_{\hat T}}{\hat T}\right)\sigma_i^\trans QY^\trans\right)\right]\\
&\equiv Z_{321}+Z_{322}
\end{align*}
with $D=\diag(\{\frac1{\hat T}\sigma_i^\trans\hat \sigma_i-\frac1{\hat T}\tr\Phi_{\hat T\hat T}\}_{i=1}^n)$ and thus $Z_{322}$ can be rewritten as
\begin{align*}
Z_{322}=\frac1T\EE\left[\tr\left(Y\frac{Q\Sigma^\trans}{\sqrt{T}}{D}\frac{\Sigma Q}{\sqrt{T}}Y^\trans\right)\right]=O(n^{\varepsilon-\frac12})
\end{align*}
while for $Z_{321}$, following the same arguments as previously, we have
\begin{align*}
Z_{321}&=\frac1{T^2}\frac{\tr\Phi_{\hat X\hat X}}{\hat T}\sum_{i=1}^n\EE\left[\tr\left(Y\frac{Q_{-i}\sigma_i\sigma_i^\trans Q_{-i}}{(1+\frac1T\sigma_i^\trans Q_{-i}\sigma_i)^2}Y^\trans\right)\right]\\
&=\frac1{T^2}\frac{\tr\Phi_{\hat X\hat X}}{\hat T}\sum_{i=1}^n\frac1{(1+\delta)^2}\EE\left[\tr\left(YQ_{-i}\sigma_i\sigma_i^\trans Q_{-i}Y^\trans\right)\right]\\
&+\frac1{T^2}\frac{\tr\Phi_{\hat X\hat X}}{\hat T}\sum_{i=1}^n\frac1{(1+\delta)^2}\EE\left[\tr\left(YQ\sigma_i\sigma_i^\trans QY^\trans\right)\left((1+\delta)^2-(1+\frac1T\sigma_i^\trans Q_{-i}\sigma_i)^2\right)\right]\\
&=\frac1{T^2}\frac{\tr\Phi_{\hat X\hat X}}{\hat T}\sum_{i=1}^n\frac1{(1+\delta)^2}\EE\left[\tr\left(YQ_{-i}\Phi_{X} Q_{-i}Y^\trans\right)\right]\\
&+\frac1{T^2}\frac{\tr\Phi_{\hat X\hat X}}{\hat T}\sum_{i=1}^n\frac1{(1+\delta)^2}\EE\left[\tr\left(YQ\Sigma^\trans{D}\Sigma QY^\trans\right)\right]\\
&=\frac{n}{T^2}\EE\left[\tr\left(YQ_{-}\Phi_{X}Q_{-}Y^\trans\right)\right]\frac{\tr(\Phi_{\hat X\hat X})}{\hat T(1+\delta)^2}+O(n^{\varepsilon-\frac12})
\end{align*}
where $D=\diag(\{(1+\delta)^2-(1+\frac1T\sigma_i^\trans Q_{-i}\sigma_i)^2\}_{i=1}^n)$.

Since $\EE[Q_{-}AQ_{-}]=\EE[QAQ]+O_{\|\cdot\|}(n^{\varepsilon-\frac12})$, we are free to plug in the asymptotic equivalent of $\EE[QAQ]$ derived in Section~\ref{subsubsec:QAQ}, and we deduce
\begin{align*}
Z_{32}&=\frac{n}{T^2}\EE\left[\tr Y\left(\bar Q\Phi_{X}\bar Q+\frac{\bar Q\Psi_{X}\bar Q \cdot \frac1{n}\tr\left(\Psi_{X}\bar Q\Phi_{X}\bar Q\right)}{1-\frac1{n}\tr\left(\Psi_{X}^2\bar Q^2\right)}\right)Y^\trans\right]\frac{\tr(\Phi_{\hat X\hat X})}{\hat T(1+\delta)^2}\\
&=\frac{\frac1{n}\tr\left(Y\bar Q\Psi_{X}\bar Q Y^\trans\right)}{1-\frac1{n}\tr\left(\Psi_{X}^2\bar Q^2\right)}\frac1{\hat T}\tr(\Psi_{\hat X\hat X})+O(n^{\varepsilon-\frac12}).
\end{align*}

\bigskip

The term $Z_{31}$ of the double sum over $i$ and $j$ ($j\neq i$) needs more efforts. To handle this term, we need to remove the dependence of both $\sigma_i$ and $\sigma_j$ in $Q$ in sequence. We start with $j$ as follows:
\begin{align*}
Z_{31}&=\frac1{T^2\hat T}\sum_{i=1}^n\sum_{j\neq i}\EE\left[\tr\left(YQ\sigma_i\hat \sigma_i^\trans\frac{\hat \sigma_j\sigma_j^\trans Q_{-j}}{1+\frac1T\sigma_j^\trans Q_{-j}\sigma_j}Y^\trans\right)\right]\\
&=\frac1{T^2\hat T}\sum_{i=1}^n\sum_{j\neq i}\EE\left[\tr\left(Y{Q_{-j}}\sigma_i\hat \sigma_i^\trans\frac{\hat \sigma_j\sigma_j^\trans Q_{-j}}{1+\frac1T\sigma_j^\trans Q_{-j}\sigma_j}Y^\trans\right)\right]\\
&-\frac1{T^3\hat T}\sum_{i=1}^n\sum_{j\neq i}\EE\left[\tr\left(Y\frac{Q_{-j}\sigma_j\sigma_j^\trans{Q_{-j}}\sigma_i\hat \sigma_i^\trans}{1+\frac1T\sigma_j^\trans Q_{-j}\sigma_j}\frac{\hat \sigma_j\sigma_j^\trans{Q_{-j}}}{1+\frac1T\sigma_j^\trans Q_{-j}\sigma_j}Y^\trans\right)\right]\\
&\equiv Z_{311}-Z_{312}
\end{align*}
where in the previous to last inequality we used the relation
\begin{align*}
Q= Q_{-j}- \frac{Q_{-j}\sigma_j \sigma_j^\trans Q_{-j}}{1+\frac1T\sigma_j^\trans Q_{-j} \sigma_j}.
\end{align*}
For $Z_{311}$, we replace $1+\frac1T\sigma_j^\trans{Q_{-j}}\sigma_j$ by $1+\delta$ and take expectation over $w_j$
\begin{align*}
Z_{311}&=\frac1{T^2\hat T}\sum_{i=1}^n\sum_{j\neq i}\EE\left[\tr\left(Y{Q_{-j}}\sigma_i\hat \sigma_i^\trans\frac{\hat \sigma_j\sigma_j^\trans Q_{-j}}{1+\frac1T\sigma_j^\trans Q_{-j}\sigma_j}Y^\trans\right)\right]\\
&=\frac1{T^2\hat T}\sum_{j=1}^n\EE\left[\tr\left(Y\frac{{Q_{-j}}\Sigma_{-j}^\trans\hat \Sigma_{-j}\hat \sigma_j\sigma_j^\trans Q_{-j}}{1+\frac1T\sigma_j^\trans Q_{-j}\sigma_j}Y^\trans\right)\right]\\
&=\frac1{T^2\hat T}\frac1{1+\delta}\sum_{j=1}^n\EE\left[\tr\left(Y{Q_{-j}}\Sigma_{-j}^\trans\hat \Sigma_{-j}\hat \sigma_j\sigma_j^\trans Q_{-j}Y^\trans\right)\right]\\
&+\frac1{T^2\hat T}\frac1{1+\delta}\sum_{j=1}^n\EE\left[\tr\left(Y\frac{{Q_{-j}}\Sigma_{-j}^\trans\hat \Sigma_{-j}\hat \sigma_j\sigma_j^\trans Q_{-j}(\delta-\frac1T\sigma_j^\trans Q_{-j}\sigma_j)}{1+\frac1T\sigma_j^\trans Q_{-j}\sigma_j}Y^\trans\right)\right]\\
&\equiv Z_{3111}+Z_{3112}.
\end{align*}
The idea to handle $Z_{3112}$ is to retrieve forms of the type $\sum_{j=1}^n d_j \hat \sigma_j\sigma_j^\trans=\hat \Sigma^\trans D \Sigma$ for some $D$ satisfying $\|D\|\leq n^{\varepsilon-\frac12}$ with high probability. To this end, we use
\begin{align*}
Q_{-j}\frac{\Sigma_{-j}^\trans \hat \Sigma_{-j}}{T}&=Q_{-j}\frac{\Sigma^\trans \hat \Sigma}{T}-Q_{-j}\frac{\sigma_j \hat \sigma_j^\trans}{T}\\
&=Q\frac{\Sigma^\trans \hat \Sigma}{T}+ \frac{Q \sigma_j \sigma_j^\trans Q}{1-\frac1T \sigma_j^\trans Q \sigma_j}\frac{\Sigma^\trans \hat \Sigma}{T}-Q_{-j}\frac{\sigma_j \hat \sigma_j^\trans}{T}
\end{align*}
and thus $Z_{3112}$ can be expanded as the sum of three terms that shall be studied in order:
\begin{align*}
Z_{3112}&=\frac1{T^2\hat T}\frac1{1+\delta}\sum_{j=1}^n\EE\left[\tr\left(Y\frac{{Q_{-j}}\Sigma_{-j}^\trans\hat \Sigma_{-j}\hat \sigma_j\sigma_j^\trans Q_{-j}(\delta-\frac1T\sigma_j^\trans Q_{-j}\sigma_j)}{1+\frac1T\sigma_j^\trans Q_{-j}\sigma_j}Y^\trans\right)\right]\\
&=\frac1{T\hat T}\frac1{1+\delta}\EE\left[\tr\left(YQ \frac{\Sigma^\trans \hat \Sigma}{T} \hat \Sigma^\trans D \Sigma QY^\trans\right)\right]\\
&+\frac1{T\hat T}\frac1{1+\delta}\sum_{j=1}^n\EE\left[\tr\left(Y\frac{Q\sigma_j \sigma_j^\trans Q \Sigma^\trans\hat \Sigma\hat \sigma_j (\delta-\frac1T\sigma_j^\trans Q_{-j}\sigma_j) \sigma_j^\trans Q}{T(1-\frac1T\sigma_j^\trans Q \sigma_j)}Y^\trans\right)\right]\\
&-\frac1{T^2\hat T}\frac1{1+\delta}\sum_{j=1}^n\EE\left[\tr\left(Y Q\sigma_j\hat \sigma_j^\trans \hat \sigma_j\sigma_j^\trans Q(\delta-\frac1T\sigma_j^\trans Q_{-j}\sigma_j)(1+\frac1T\sigma_j^\trans Q_{-j}\sigma_j)Y^\trans\right)\right]\\
&\equiv Z_{31121}+Z_{31122}-Z_{31123}.
\end{align*}
where $D=\diag(\{\delta-\frac1T\sigma_j^\trans Q_{-j}\sigma_j\}_{i=1}^n)$. First, $Z_{31121}$ is of order $O(n^{\varepsilon-\frac{1}{2}})$ since $Q\frac{\Sigma^\trans \hat \Sigma}{T}$ is of bounded operator norm. Subsequently, $Z_{31122}$ can be rewritten as 
\begin{align*}
Z_{31122}&=\frac1{\hat T}\frac1{1+\delta}\EE\left[\tr\left(Y Q\frac{\Sigma^\trans D \Sigma}{T} Q Y^\trans\right)\right]=O(n^{\varepsilon-\frac{1}{2}})
\end{align*}
with here 
\begin{align*}
	D &=\diag\left\{\frac{\left(\delta-\frac1T\sigma_j^\trans Q_{-j}\sigma_j\right)\left(\frac1T\tr\left(Q_{-j} \frac{\Sigma_{-j}^\trans \hat \Sigma_{-j}}{T}\Phi_{\hat X X}\right)+\frac1T\tr\left(Q_{-j}\Phi\right)\frac1T\tr\Phi_{\hat X\hat X}\right)}{(1-\frac1T\sigma_j^\trans Q \sigma_j)(1+\frac1T \sigma_j^\trans Q_{-j} \sigma_j)}\right\}_{i=1}^n.
\end{align*}
The same arguments apply for $Z_{31123}$ but for
\begin{align*}
	D &=\diag\left\{\frac{\tr\Phi_{\hat X\hat X}}{T}(\delta-\frac1T\sigma_j^\trans Q_{-j}\sigma_j)(1+\frac1T\sigma_j^\trans Q_{-j}\sigma_j)\right\}_{i=1}^n
\end{align*}
which completes to show that $|Z_{3112}|\leq C n^{\varepsilon-\frac{1}{2}}$ and thus
\begin{align*}
Z_{311}&=Z_{3111}+O(n^{\varepsilon-\frac{1}{2}})\\
&=\frac1{T^2\hat T}\frac1{1+\delta}\sum_{j=1}^n\EE\left[\tr\left(Y{Q_{-j}}\Sigma_{-j}^\trans\hat \Sigma_{-j}\hat \sigma_j\sigma_j^\trans Q_{-j}Y^\trans\right)\right]+O(n^{\varepsilon-\frac{1}{2}}).
\end{align*}

It remains to handle $Z_{3111}$. Under the same claims as above, we have
\begin{align*}
Z_{3111}&=\frac1{T\hat T}\frac1{1+\delta}\sum_{j=1}^n\EE\left[\tr\left(Y{Q_{-j}}\frac{\Sigma_{-j}^\trans\hat \Sigma_{-j}}{T}\Phi_{\hat X{X}}Q_{-j}Y^\trans\right)\right]\\
&=\frac1{T\hat T}\frac1{1+\delta}\sum_{j=1}^n\sum_{i\neq j}\EE\left[\tr\left(Y{Q_{-j}}\frac{\sigma_i\hat \sigma_i^\trans}{T}\Phi_{\hat X{X}}Q_{-j}Y^\trans\right)\right]\\
&=\frac1{T^2\hat T}\frac1{1+\delta}\sum_{j=1}^n\sum_{i\neq j}\EE\left[\tr\left(Y\frac{{Q_{-ij}}\sigma_i\hat \sigma_i^\trans}{1+\frac1T\sigma_i^\trans{Q_{-ij}}\sigma_i}\Phi_{\hat X{X}}Q_{-ij}Y^\trans\right)\right]\\
&-\frac1{T^3\hat T}\frac1{1+\delta}\sum_{j=1}^n\sum_{i\neq j}\EE\left[\tr\left(Y\frac{{Q_{-ij}}\sigma_i\hat \sigma_i^\trans}{1+\frac1T\sigma_i^\trans{Q_{-ij}}\sigma_i}\Phi_{\hat X{X}}\frac{Q_{-ij}\sigma_i\sigma_i^\trans{Q_{-ij}}}{1+\frac1T\sigma_i^\trans{Q_{-ij}}\sigma_i}Y^\trans\right)\right]\\
&\equiv Z_{31111}-Z_{31112}
\end{align*}
where we introduced the notation $Q_{-ij}=(\frac1T\Sigma^\trans \Sigma-\frac1T\sigma_i \sigma_i^\trans-\frac1T\sigma_j \sigma_j^\trans+\gamma I_T)^{-1}$.
For $Z_{31111}$, we replace $\frac1T\sigma_i^\trans{Q_{-ij}}\sigma_i$ by $\delta$, and take the expectation over $w_i$, as follows
\begin{align*}
&Z_{31111}=\frac1{T^2\hat T}\frac1{1+\delta}\sum_{j=1}^n\sum_{i\neq j}\EE\left[\tr\left(Y\frac{{Q_{-ij}}\sigma_i\hat \sigma_i^\trans}{1+\frac1T\sigma_i^\trans{Q_{-ij}}\sigma_i}\Phi_{\hat X{X}}Q_{-ij}Y^\trans\right)\right]\\
&=\frac1{T^2\hat T}\frac1{(1+\delta)^2}\sum_{j=1}^n\sum_{i\neq j}\EE\left[\tr\left(Y{Q_{-ij}}\sigma_i\hat \sigma_i^\trans\Phi_{\hat X{X}}Q_{-ij}Y^\trans\right)\right]\\
&+\frac1{T^2\hat T}\frac1{(1+\delta)^2}\sum_{j=1}^n\sum_{i\neq j}\EE\left[\tr\left(Y\frac{{Q_{-ij}}\sigma_i\hat \sigma_i^\trans(\delta-\frac1T\sigma_i^\trans{Q_{-ij}}\sigma_i)}{1+\frac1T\sigma_i^\trans{Q_{-ij}}\sigma_i}\Phi_{\hat X{X}}Q_{-ij}Y^\trans\right)\right]\\
&=\frac{n^2}{T^2\hat T}\frac1{(1+\delta)^2}\EE\left[\tr\left(Y{Q_{--}}\Phi_{X\hat X}\Phi_{\hat X{X}}Q_{--}Y^\trans\right)\right]\\
&+\frac1{T^2\hat T}\frac1{(1+\delta)^2}\sum_{j=1}^n\sum_{i\neq j}\EE\left[\tr\left(Y{Q_{-j}}\sigma_i\hat \sigma_i^\trans \left(\delta-\frac1T\sigma_i^\trans{Q_{-ij}}\sigma_i \right)\Phi_{\hat X{X}}Q_{-j}Y^\trans\right)\right]\\
&+\frac1{T^2\hat T}\frac1{(1+\delta)^2}\sum_{j=1}^n\sum_{i\neq j}\EE\left[\tr\left(Y{Q_{-j}}\sigma_i\hat \sigma_i^\trans\Phi_{\hat X{X}}\frac{Q_{-j}\frac1T\sigma_i\sigma_i^\trans{Q_{-j}}}{1-\frac1T\sigma_i^\trans{Q_{-j}\sigma_i}}Y^\trans \left(\delta-\frac1T\sigma_i^\trans{Q_{-ij}}\sigma_i\right) \right)\right]\\
&=\frac{n^2}{T^2\hat T}\frac1{(1+\delta)^2}\EE\left[\tr\left(Y{Q_{--}}\Phi_{X\hat X}\Phi_{\hat X{X}}Q_{--}Y^\trans\right)\right]\\
&+\frac1{T^2\hat T}\frac1{(1+\delta)^2}\sum_{j=1}^n\EE\left[\tr\left(Y{Q_{-j}}\Sigma_{-j}^\trans{D}\hat \Sigma_{-j}\Phi_{\hat X{X}}Q_{-j}Y^\trans\right)\right]\\
&+\frac{n}{T^2\hat T}\frac1{(1+\delta)^2}\sum_{j=1}^n\EE\left[Y{Q_{-j}}\Sigma_{-j}^\trans{D^\prime}\Sigma_{-j}Q_{-j}Y^\trans\right]+O(n^{\varepsilon-\frac12})\\
&=\frac{n^2}{T^2\hat T}\frac1{(1+\delta)^2}\EE\left[\tr\left(Y{Q_{--}}\Phi_{X\hat X}\Phi_{\hat X{X}}Q_{--}Y^\trans\right)\right]+O(n^{\varepsilon-\frac12})
\end{align*}
with $Q_{--}$ having the same law as $Q_{-ij}$, $D=\diag(\{\delta-\frac1T\sigma_i^\trans{Q_{-ij}}\sigma_i\}_{i=1}^n)$ and $D^\prime=\diag\left\{\frac{(\delta-\frac1T\sigma_i^\trans{Q_{-ij}}\sigma_i)\frac1T\tr\left(\Phi_{\hat X{X}}{Q_{-ij}}\Phi_{X\hat X}\right)}{(1-\frac1T\sigma_i^\trans{Q_{-j}}\sigma_i)(1+\frac1T\sigma_i^\trans{Q_{-ij}}\sigma_i)}\right\}_{i=1}^n$, both expected to be of order $O(n^{\varepsilon-\frac12})$. Using again the asymptotic equivalent of $\EE[QAQ]$ devised in Section~\ref{subsubsec:QAQ}, we then have 
\begin{align*}
Z_{31111}&=\frac{n^2}{T^2\hat T}\frac1{(1+\delta)^2}\EE\left[\tr\left(Y{Q_{--}}\Phi_{X\hat X}\Phi_{\hat X{X}}Q_{--}Y^\trans\right)\right]+O(n^{\varepsilon-\frac12})\\
&=\frac1{\hat T}\tr\left(Y\bar Q\Psi_{X\hat X}\Psi_{\hat X{X}}\bar Q{Y^\trans}\right)+\frac1{\hat T}\tr\left(\Psi_{X}\bar Q\Psi_{X\hat X}\Psi_{\hat X{X}}\bar Q\right)\frac{\frac1{n}\tr\left(Y\bar Q\Psi_{X}{\bar Q}Y^\trans\right)}{1-\frac1{n}\tr(\Psi_X^2\bar Q^2)}\\
&+O(n^{\varepsilon-\frac12}).
\end{align*}

Following the same principle, we deduce for $Z_{31112}$ that
\begin{align*}
Z_{31112}&=\frac1{T^3\hat T}\frac1{1+\delta}\sum_{j=1}^n\sum_{i\neq j}\EE\left[\tr\left(Y\frac{{Q_{-ij}}\sigma_i\hat \sigma_i^\trans}{1+\frac1T\sigma_i^\trans{Q_{-ij}}\sigma_i}\Phi_{\hat X{X}}\frac{Q_{-ij}\sigma_i\sigma_i^\trans{Q_{-ij}}}{1+\frac1T\sigma_i^\trans{Q_{-ij}}\sigma_i}Y^\trans\right)\right]\\
&=\frac1{T^3\hat T}\frac1{(1+\delta)^3}\sum_{j=1}^n\sum_{i\neq j}\EE\left[\tr\left(Y{Q_{-ij}}\sigma_i\sigma_i^\trans{Q_{-ij}}Y^\trans\right)\frac1T\tr\left(\Phi_{\hat X{X}}Q_{-ij}\Phi_{X\hat X}\right)\right]\\
&+\frac1{T^3\hat T}\frac1{(1+\delta)^3}\sum_{j=1}^n\sum_{i\neq j}\EE\left[\tr\left(Y{Q_{-j}}\sigma_i{D_i}\sigma_i^\trans{Q_{-j}}Y^\trans\right)\right]+O(n^{\varepsilon-\frac12})\\
&=\frac{n^2}{T^3\hat T}\frac1{1+\delta}\EE\left[\tr\left(Y{Q_{--}}\Phi_{X}{Q_{--}}Y^\trans\right)\frac1T\tr\left(\Phi_{\hat X{X}}Q_{--}\Phi_{X\hat X}\right)\right]+O(n^{\varepsilon-\frac12})\\
&=\frac1{\hat T}\tr\left(\Psi_{\hat X{X}}\bar Q\Psi_{X\hat X}\right)\frac{\frac1{n}\tr\left(Y\bar Q\Psi_{X}{\bar Q}Y^\trans\right)}{1-\frac1{n}\tr(\Psi_X^2\bar Q^2)}+O(n^{\varepsilon-\frac12}).
\end{align*}
with $D_i=\frac1T\tr\left(\Phi_{\hat X{X}}Q_{-ij}\Phi_{X\hat X}\right)\left[(1+\delta)^2-(1+\frac1T\sigma_i^\trans{Q_{-ij}}\sigma_i)^2\right]$, also believed to be of order $O(n^{\varepsilon-\frac12})$. Recalling the fact that $Z_{311}=Z_{3111}+O(n^{\varepsilon-\frac12})$, we can thus conclude for $Z_{311}$ that
\begin{align*}
Z_{311}&=\frac1{\hat T}\tr\left(Y\bar Q\Psi_{X\hat X}\Psi_{\hat X{X}}\bar Q{Y^\trans}\right)+\frac1{\hat T}\tr\left(\Psi_{X}\bar Q\Psi_{X\hat X}\Psi_{\hat X{X}}\bar Q\right)\frac{\frac1{n}\tr\left(Y\bar Q\Psi_{X}{\bar Q}Y^\trans\right)}{1-\frac1{n}\tr(\Psi_X^2\bar Q^2)}\\
&-\frac1{\hat T}\tr\left(\Psi_{\hat X{X}}\bar Q\Psi_{X\hat X}\right)\frac{\frac1{n}\tr\left(Y\bar Q\Psi_{X}{\bar Q}Y^\trans\right)}{1-\frac1{n}\tr(\Psi_X^2\bar Q^2)}+O(n^{\varepsilon-\frac12}).
\end{align*}

\bigskip

As for $Z_{312}$, we have
\begin{align*}
Z_{312}&=\frac1{T^3\hat T}\sum_{i=1}^n\sum_{j\neq i}\EE\left[\tr\left(Y\frac{Q_{-j}\sigma_j\sigma_j^\trans{Q_{-j}}\sigma_i\hat \sigma_i^\trans}{1+\frac1T\sigma_j^\trans Q_{-j}\sigma_j}\frac{\hat \sigma_j\sigma_j^\trans{Q_{-j}}}{1+\frac1T\sigma_j^\trans Q_{-j}\sigma_j}Y^\trans\right)\right]\\
&=\frac1{T^3\hat T}\sum_{j=1}^n\EE\left[\tr\left(Y\frac{Q_{-j}\sigma_j\sigma_j^\trans{Q_{-j}}\Sigma_{-j}^\trans\hat \Sigma_{-j}}{1+\frac1T\sigma_j^\trans Q_{-j}\sigma_j}\frac{\hat \sigma_j\sigma_j^\trans{Q_{-j}}}{1+\frac1T\sigma_j^\trans Q_{-j}\sigma_j}Y^\trans\right)\right].
\end{align*}
Since $Q_{-j}\frac1T\Sigma_{-j}^\trans\hat \Sigma_{-j}$ is expected to be of bounded norm, using the concentration inequality of the quadratic form $\frac1T\sigma_j^\trans{Q_{-j}}\frac{\Sigma_{-j}^\trans\hat \Sigma_{-j}}{T}\hat \sigma_j$, we infer
\begin{align*}
Z_{312}&=\frac1{T\hat T}\sum_{j=1}^n\EE\left[\tr\left(Y\frac{Q_{-j}\sigma_j\sigma_j^\trans{Q_{-j}}Y^\trans}{(1+\frac1T\sigma_j^\trans Q_{-j}\sigma_j)^2}\right)\left(\frac1{T^2}\tr\left(Q_{-j}\Sigma_{-j}^\trans\hat \Sigma_{-j}\Phi_{\hat X{X}}\right)+O(n^{\varepsilon-\frac12})\right)\right]\\
&=\frac1{T\hat T}\sum_{j=1}^n\EE\left[\tr\left(Y\frac{Q_{-j}\sigma_j\sigma_j^\trans{Q_{-j}}Y^\trans}{(1+\frac1T\sigma_j^\trans Q_{-j}\sigma_j)^2}\right)\left(\frac1{T^2}\tr\left(Q_{-j}\Sigma_{-j}^\trans\hat \Sigma_{-j}\Phi_{\hat X{X}}\right)\right)\right]+O(n^{\varepsilon-\frac12}).\\
\end{align*}
We again replace $\frac1T\sigma_j^\trans{Q_{-j}}\sigma_j$ by $\delta$ and take expectation over $w_j$ to obtain
\begin{align*}
Z_{312}&=\frac1{T\hat T}\frac1{(1+\delta)^2}\sum_{j=1}^n\EE\left[\tr\left(YQ_{-j}\sigma_j\sigma_j^\trans{Q_{-j}}Y^\trans\right)\frac1{T^2}\tr\left(Q_{-j}\Sigma_{-j}^\trans\hat \Sigma_{-j}\Phi_{\hat X{X}}\right)\right]\\
&+\frac1{T\hat T}\frac1{(1+\delta)^2}\sum_{j=1}^n\EE\left[\frac{\tr(YQ_{-j}\sigma_j{D_j}\sigma_j^\trans Q_{-j} Y^\trans)}{(1+\frac1T\sigma_j^\trans Q_{-j}\sigma_j)^2} \frac1{T^2}\tr\left(Q_{-j}\Sigma_{-j}^\trans\hat \Sigma_{-j}\Phi_{\hat X X}\right)\right]+O(n^{\varepsilon-\frac12})\\
&=\frac{n}{T\hat T}\frac1{(1+\delta)^2}\EE\left[\tr\left(YQ_{-}\Phi_{X}{Q_{-}}Y^\trans\right)\frac1{T^2}\tr\left(Q_{-}\Sigma_{-}^\trans\hat \Sigma_{-}\Phi_{\hat X{X}}\right)\right]\\
&+\frac1{T\hat T}\frac1{(1+\delta)^2}\EE\left[\tr\left(YQ\Sigma^\trans{D}\Sigma{Q}Y^\trans\right)\frac1{T^2}\tr\left(Q_{-}\Sigma_{-}^\trans\hat \Sigma_{-}\Phi_{\hat X{X}}\right)\right]+O(n^{\varepsilon-\frac12})
\end{align*}
with $D_j=(1+\delta)^2-(1+\frac1T\sigma_j^\trans Q_{-j}\sigma_j)^2=O(n^{\varepsilon-\frac12})$, which eventually brings the second term to vanish, and we thus get
\begin{align*}
Z_{312}&=\frac{n}{T\hat T}\frac1{(1+\delta)^2}\EE\left[\tr\left(YQ_{-}\Phi_{X}{Q_{-}}Y^\trans\right)\frac1{T^2}\tr\left(Q_{-}\Sigma_{-}^\trans\hat \Sigma_{-}\Phi_{\hat X{X}}\right)\right]+O(n^{\varepsilon-\frac12}).
\end{align*}

For the term $\frac1{T^2}\tr\left(Q_{-}\Sigma_{-}^\trans\hat \Sigma_{-}\Phi_{\hat X{X}}\right)$ we apply again the concentration inequality to get
\begin{align*}
&\frac1{T^2}\tr\left(Q_{-}\Sigma_{-}^\trans\hat \Sigma_{-}\Phi_{\hat X{X}}\right)=\frac1{T^2}\sum_{i\neq j}\tr\left(Q_{-j}\sigma_i\hat \sigma_i^\trans\Phi_{\hat X{X}}\right)\\
&=\frac1{T^2}\sum_{i\neq j}\tr\left(\frac{Q_{-ij}\sigma_i\hat \sigma_i^\trans}{1+\frac1T\sigma_i^\trans{Q_{-ij}}\sigma_i}\Phi_{\hat X{X}}\right)\\
&=\frac1{T^2}\frac1{1+\delta}\sum_{i\neq j}\tr\left(Q_{-ij}\sigma_i\hat \sigma_i^\trans\Phi_{\hat X{X}}\right)+\frac1{T^2}\frac1{1+\delta}\sum_{i\neq j}\tr\left(\frac{Q_{-ij}\sigma_i\hat \sigma_i^\trans(\delta-\frac1T\sigma_i^\trans{Q_{-ij}}\sigma_i)}{1+\frac1T\sigma_i^\trans{Q_{-ij}}\sigma_i}\Phi_{\hat X{X}}\right)\\
&=\frac{n-1}{T^2}\frac1{1+\delta}\tr\left(\Phi_{\hat X{X}}\EE[Q_{--}]\Phi_{X\hat X}\right)+\frac1{T^2}\frac1{1+\delta}\tr\left(Q_{-j}\Sigma_{-j}^\trans{D}\hat \Sigma_{-j}\Phi_{\hat X{X}}\right)+O(n^{\varepsilon-\frac12})
\end{align*}
with high probability, where $D=\diag(\{\delta-\frac1T\sigma_i^\trans{Q_{-ij}}\sigma_i\}_{i=1}^n)$, the norm of which is of order $O(n^{\varepsilon-\frac12})$. This entails 
\begin{equation*}
\frac1{T^2}\tr\left(Q_{-}\Sigma_{-}^\trans\hat \Sigma_{-}\Phi_{\hat X{X}}\right)=\frac{n}{T^2}\frac1{1+\delta}\tr\left(\Phi_{\hat X{X}} \EE[Q_{--}] \Phi_{X\hat X}\right)+O(n^{\varepsilon-\frac12})
\end{equation*}
with high probability.
Once more plugging the asymptotic equivalent of $\EE[QAQ]$ deduced in Section~\ref{subsubsec:QAQ}, we conclude for $Z_{312}$ that
\begin{align*}
Z_{312}&=\frac1{\hat T}\tr\left(\Psi_{\hat X{X}}\bar Q\Psi_{X\hat X}\right)\frac{\frac1{n}\tr\left(Y\bar Q\Psi_{X}{\bar Q}Y^\trans\right)}{1-\frac1{n}\tr(\Psi_X^2\bar Q^2)}+O(n^{\varepsilon-\frac12})
\end{align*}
and eventually for $Z_{31}$
\begin{align*}
Z_{31}&=\frac1{\hat T}\tr\left(Y\bar Q\Psi_{X\hat X}\Psi_{\hat X{X}}\bar Q{Y^\trans}\right)+\frac1{\hat T}\tr\left(\Psi_{X}\bar Q\Psi_{X\hat X}\Psi_{\hat X{X}}\bar Q\right)\frac{\frac1{n}\tr\left(Y\bar Q\Psi_{X}{\bar Q}Y^\trans\right)}{1-\frac1{n}\tr(\Psi_X^2\bar Q^2)}\\
&-\frac{2}{\hat T}\tr\left(\Psi_{\hat X{X}}\bar Q\Psi_{X\hat X}\right)\frac{\frac1{n}\tr\left(Y\bar Q\Psi_{X}{\bar Q}Y^\trans\right)}{1-\frac1{n}\tr(\Psi_X^2\bar Q^2)}+O(n^{\varepsilon-\frac12}).
\end{align*}

Combining the estimates of $\EE[Z_2]$ as well as $Z_{31}$ and $Z_{32}$, we finally have the estimates for the test error defined in \eqref{eq:E-test} as
\begin{align*}
E_{\rm test}&=\frac1{\hat T}\left\|\hat Y^\trans-\Psi_{X\hat X}^\trans\bar Q{Y^\trans}\right\|_F^2\\
&+\frac{\frac1{n}\tr\left(Y\bar Q\Psi_{X}{\bar Q}Y^\trans\right)}{1-\frac1{n}\tr(\Psi_X^2\bar Q^2)}\left[\frac1{\hat T}\tr\Psi_{\hat X\hat X}+\frac1{\hat T}\tr\left(\Psi_{X}\bar Q\Psi_{X\hat X}\Psi_{\hat X{X}}\bar Q\right)-\frac{2}{\hat T}\tr\left(\Psi_{\hat X\hat X}\bar Q\Psi_{X\hat X}\right)\right]\\
&+O(n^{\varepsilon-\frac12}).
\end{align*}
Since by definition, $\bar Q=\left(\Psi_{X}+\gamma{I_T}\right)^{-1}$, we may use
\begin{align*}
\Psi_{X}\bar Q=\left(\Psi_{X}+\gamma{I_T}-\gamma{I_T}\right)\left(\Psi_{X}+\gamma{I_T}\right)^{-1}=I_T-\gamma\bar Q
\end{align*}
in the second term in brackets to finally retrieve the form of Conjecture~\ref{claim:Etest}.

\section{Concluding Remarks}
\label{sec:conclusion}

This article provides a possible direction of exploration of random matrices involving entry-wise non-linear transformations (here through the function $\sigma(\cdot)$), as typically found in modelling neural networks, by means of a concentration of measure approach. The main advantage of the method is that it leverages the concentration of an initial random vector $w$ (here a Lipschitz function of a Gaussian vector) to transfer concentration to all vector $\sigma$ (or matrix $\Sigma$) being Lipschitz functions of $w$. This induces that Lipschitz functionals of $\sigma$ (or $\Sigma$) further satisfy concentration inequalities and thus, if the Lipschitz parameter scales with $n$, convergence results as $n\to\infty$. With this in mind, note that we could have generalized our input-output model $z = \beta^\trans \sigma(Wx)$ of Section~\ref{sec:model} to 
\begin{align*}
	z &= \beta^\trans \sigma(x;\mathcal W)
\end{align*}
for $\sigma:\RR^p\times \mathcal P\to \RR^n$ with $\mathcal P$ some probability space and $\mathcal W\in\mathcal P$ a random variable such that $\sigma(x;\mathcal W)$ and $\sigma(X;\mathcal W)$ (where $\sigma(\cdot)$ is here applied column-wise) satisfy a concentration of measure phenomenon; it is not even necessary that $\sigma(X;\mathcal W)$ has a {\it normal} concentration so long that the corresponding concentration function allows for appropriate convergence results. This generalized setting however has the drawback of being less explicit and less practical (as most neural networks involve linear maps $Wx$ rather than non-linear maps of $\mathcal W$ and $x$).

A much less demanding generalization though would consist in changing the vector $w\sim \mathcal N_\varphi(0,I_p)$ for a vector $w$ still satisfying an exponential (not necessarily normal) concentration. This is the case notably if $w=\varphi(\tilde w)$ with $\varphi(\cdot)$ a Lipschitz map with Lipschitz parameter bounded by, say, $\log(n)$ or any small enough power of $n$. This would then allow for $w$ with heavier than Gaussian tails.

\medskip

Despite its simplicity, the concentration method also has some strong limitations that presently do not allow for a sufficiently profound analysis of the testing mean square error. We believe that Conjecture~\ref{claim:Etest} can be proved by means of more elaborate methods. Notably, we believe that the powerful Gaussian method advertised in \citep{PAS11} which relies on Stein's lemma and the Poincar\'e--Nash inequality could provide a refined control of the residual terms involved in the derivation of Conjecture~\ref{claim:Etest}. However, since Stein's lemma (which states that $\EE[x\phi(x)]=\EE[\phi'(x)]$ for $x\sim\mathcal N(0,1)$ and differentiable polynomially bounded $\phi$) can only be used on products $x\phi(x)$ involving the linear component $x$, the latter is not directly accessible; we nonetheless believe that appropriate ansatzs of Stein's lemma, adapted to the non-linear setting and currently under investigation, could be exploited. 

As a striking example, one key advantage of such a tool would be the possibility to evaluate expectations of the type $Z=\EE[\sigma\sigma^\trans (\frac1T\sigma^\trans Q_-\sigma-\alpha)]$ which, in our present analysis, was shown to be bounded in the order of symmetric matrices by $\Phi Cn^{\varepsilon-\frac12}$ with high probability. Thus, if no matrix (such as $\bar Q$) pre-multiplies $Z$, since $\|\Phi\|$ can grow as large as $O(n)$, $Z$ cannot be shown to vanish. But such a bound does not account for the fact that $\Phi$ would in general be unbounded because of the term $\bar\sigma\bar\sigma^\trans$ in the display $\Phi=\bar{\sigma}\bar{\sigma}^\trans+\EE[(\sigma-\bar\sigma)(\sigma-\bar\sigma)^\trans]$, where $\bar\sigma=\EE[\sigma]$. Intuitively, the ``mean'' contribution $\bar\sigma\bar\sigma^\trans$ of $\sigma\sigma^\trans$, being post-multiplied in $Z$ by $\frac1T\sigma^\trans Q_-\sigma-\alpha$ (which averages to zero) disappears; and thus only smaller order terms remain. We believe that the aforementioned ansatzs for the Gaussian tools would be capable of subtly handling this self-averaging effect on $Z$ to prove that $\|Z\|$ vanishes (for $\sigma(t)=t$, it is simple to show that $\|Z\|\leq Cn^{-1}$). In addition, Stein's lemma-based methods only require the differentiability of $\sigma(\cdot)$, which need not be Lipschitz, thereby allowing for a larger class of activation functions.

\medskip

As suggested in the simulations of Figure~\ref{fig:perf2}, our results also seem to extend to non continuous functions $\sigma(\cdot)$. To date, we cannot envision a method allowing to tackle this setting.

\bigskip

In terms of neural network applications, the present article is merely a first step towards a better understanding of the ``hardening'' effect occurring in large dimensional networks with numerous samples and large data points (that is, simultaneously large $n,p,T$), which we exemplified here through the convergence of mean-square errors. The mere fact that some standard performance measure of these random networks would ``freeze'' as $n,p,T$ grow at the predicted regime and that the performance would heavily depend on the distribution of the random entries is already in itself an interesting result to neural network understanding and dimensioning. However, more interesting questions remain open. Since neural networks are today dedicated to classification rather than regression, a first question is the study of the asymptotic statistics of the output $z=\beta^\trans\sigma(Wx)$ itself; we believe that $z$ satisfies a central limit theorem with mean and covariance allowing for assessing the asymptotic misclassification rate. 

A further extension of the present work would be to go beyond the single-layer network and include multiple layers (finitely many or possibly a number scaling with $n$) in the network design. The interest here would be on the key question of the best distribution of the number of neurons across the successive layers.

It is also classical in neural networks to introduce different (possibly random) biases at the neuron level, thereby turning $\sigma(t)$ into $\sigma(t+b)$ for a random variable $b$ different for each neuron. This has the effect of mitigating the negative impact of the mean $\EE[\sigma(w_i^\trans x_j)]$, which is independent of the neuron index $i$.

Finally, neural networks, despite their having been recently shown to operate almost equally well when taken random in some very specific scenarios, are usually only {\it initiated} as random networks before being subsequently trained through backpropagation of the error on the training dataset (that is, essentially through convex gradient descent). We believe that our framework can allow for the understanding of at least finitely many steps of gradient descent, which may then provide further insights into the overall performance of deep learning networks.

\appendix

\section{Intermediary Lemmas}
\label{sec:lemmas}

This section recalls some elementary algebraic relations and identities used throughout the proof section.

\begin{lemma}[Resolvent Identity]
	\label{lem:resolvent_identity}
	For invertible matrices $A,B$, $A^{-1}-B^{-1}=A^{-1}(B-A)B^{-1}$.
\end{lemma}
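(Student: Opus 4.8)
This is a one-line algebraic verification, so the plan is simply to expand the right-hand side and simplify. Starting from $A^{-1}(B-A)B^{-1}$, I would distribute the middle factor to write $A^{-1}(B-A)B^{-1} = A^{-1}BB^{-1} - A^{-1}AB^{-1}$. Using $BB^{-1} = I$ and $A^{-1}A = I$, the two terms collapse to $A^{-1} - B^{-1}$, which is exactly the left-hand side.

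There is no real obstacle here: the only hypotheses used are that $A$ and $B$ are invertible (so that $A^{-1}$ and $B^{-1}$ exist and $AA^{-1}=A^{-1}A=I$, $BB^{-1}=B^{-1}B=I$), and the computation requires nothing beyond associativity of matrix multiplication and the distributive law. One could equivalently derive it by left-multiplying the target equality $A^{-1}-B^{-1}=A^{-1}(B-A)B^{-1}$ by $A$ and right-multiplying by $B$, reducing it to $B - A = B - A$; but the direct expansion above is the cleanest presentation. I would present the forward expansion as the proof.
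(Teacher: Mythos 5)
Your expansion $A^{-1}(B-A)B^{-1} = A^{-1}BB^{-1} - A^{-1}AB^{-1} = A^{-1} - B^{-1}$ is correct and is the standard verification; the paper states this lemma in the appendix without proof precisely because it is this one-line computation. Nothing is missing.
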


\begin{lemma}[A rank-$1$ perturbation identity]
	\label{lem:QQ-}
	For $A$ Hermitian, $v$ a vector and $t\in\RR$, if $A$ and $A+tvv^\trans$ are invertible, then 
	\begin{align*}
		\left( A + tvv^\trans \right)^{-1} v &= \frac{A^{-1}v}{1+t v^\trans A^{-1}v}.
	\end{align*}
\end{lemma}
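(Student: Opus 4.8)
\textbf{Proof plan for Lemma~\ref{lem:QQ-}.}
The plan is to verify the claimed identity by a direct algebraic manipulation, exploiting the scalar nature of the quantity $v^\trans A^{-1}v$. First I would set $s \equiv v^\trans A^{-1}v \in \RR$, so that $1+ts$ is exactly the denominator appearing on the right-hand side. The key observation is that $(A+tvv^\trans)$ applied to the vector $A^{-1}v$ produces something proportional to $v$: indeed
\begin{align*}
	\left( A + tvv^\trans \right) A^{-1} v &= v + t v (v^\trans A^{-1} v) = (1 + ts)\, v.
\end{align*}
Since $A+tvv^\trans$ is assumed invertible, I would then left-multiply both sides by $(A+tvv^\trans)^{-1}$ to obtain $A^{-1}v = (1+ts)\,(A+tvv^\trans)^{-1}v$, and divide through by $1+ts$, which gives the stated formula
\begin{align*}
	\left( A + tvv^\trans \right)^{-1} v &= \frac{A^{-1}v}{1+ t v^\trans A^{-1}v}.
\end{align*}

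The only point requiring a word of care is the division step: one must know that $1+ts \neq 0$. This is automatic from the hypotheses, since if $1+ts=0$ then the displayed computation would give $(A+tvv^\trans)A^{-1}v = 0$ with $A^{-1}v \neq 0$ (as $v\neq 0$ and $A$ invertible), contradicting the invertibility of $A+tvv^\trans$; alternatively, the matrix determinant lemma gives $\det(A+tvv^\trans) = (1+ts)\det A$, so invertibility of both $A$ and $A+tvv^\trans$ forces $1+ts\neq 0$. There is essentially no obstacle here — the lemma is a one-line consequence of the rank-one structure — so the ``main obstacle'' is merely bookkeeping the non-vanishing of the denominator, which I would dispatch as above.

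Strictly speaking the Hermitian hypothesis on $A$ is not needed for this particular identity (only invertibility of $A$ and of $A+tvv^\trans$ is used), but I would keep the statement as given since that is the form in which it is invoked in the proof section.
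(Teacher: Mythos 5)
Your proof is correct and is the standard one‑line verification of this Sherman--Morrison‑type identity (apply $A+tvv^\trans$ to $A^{-1}v$, observe the result is $(1+t\,v^\trans A^{-1}v)v$, and invert); the paper itself states this lemma in the appendix without proof, so there is nothing to diverge from. Your care about the non-vanishing of the denominator is exactly the right bookkeeping, and the observation that Hermitian symmetry is not needed is also accurate.
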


\begin{lemma}[Operator Norm Control]
	\label{lem:norm_control}
	For nonnegative definite $A$ and $z\in\CC\setminus\RR^+$,
	\begin{align*}
		\| \left( A -z I_T \right)^{-1} \| &\leq {\rm dist}(z,\RR^+)^{-1} \\
		\| A \left( A -z I_T \right)^{-1} \| &\leq 1
	\end{align*}
	where ${\rm dist}(x,\mathcal A)$ is the Hausdorff distance of a point to a set. In particular, for $\gamma>0$, $\| ( A +\gamma I_T )^{-1} \| \leq \gamma^{-1}$ and $\| A ( A +\gamma I_T )^{-1} \| \leq 1$.
\end{lemma}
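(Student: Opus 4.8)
The plan is to reduce both bounds to scalar estimates via the spectral theorem. Since $A\in\RR^{T\times T}$ is symmetric and nonnegative definite, write $A=U\Lambda U^\trans$ with $U$ real orthogonal and $\Lambda=\diag(\lambda_1,\ldots,\lambda_T)$, $\lambda_i\geq 0$. For $z\in\CC\setminus\RR^+$ each $\lambda_i-z\neq 0$, so $(A-zI_T)^{-1}=U(\Lambda-zI_T)^{-1}U^\trans$ and $A(A-zI_T)^{-1}=U\Lambda(\Lambda-zI_T)^{-1}U^\trans$ are well defined. The one point worth recording is that conjugation by a real orthogonal matrix leaves singular values unchanged even when the diagonal part is complex: if $M=UDU^\trans$ with $D$ diagonal, then (using that $U$ is real and $D$ diagonal) $M^\herm M=U\bar D D U^\trans=U|D|^2U^\trans$, whose eigenvalues are $|D_{ii}|^2$, so $\|M\|=\max_i|D_{ii}|$. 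Hence $\|(A-zI_T)^{-1}\|=\max_i|\lambda_i-z|^{-1}$ and $\|A(A-zI_T)^{-1}\|=\max_i\lambda_i|\lambda_i-z|^{-1}$.

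It then remains to estimate these two maxima. For the first, every $\lambda_i$ lies in $\RR^+$, so $|\lambda_i-z|\geq {\rm dist}(z,\RR^+)$ by definition of the Hausdorff distance, which gives $\|(A-zI_T)^{-1}\|\leq {\rm dist}(z,\RR^+)^{-1}$. For the second, the relevant scalar inequality is $\lambda_i\leq|\lambda_i-z|$, i.e. $\lambda_i^2\leq|\lambda_i-z|^2=\lambda_i^2-2\lambda_i\Re(z)+|z|^2$, which is equivalent to $2\lambda_i\Re(z)\leq|z|^2$; this holds as soon as $\Re(z)\leq 0$ (recall $\lambda_i\geq 0$), and in particular for $z=-\gamma$ with $\gamma>0$. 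The two ``in particular'' claims are then just this specialization: $(\Lambda+\gamma I_T)^{-1}$ has diagonal entries in $(0,\gamma^{-1}]$, and $\Lambda(\Lambda+\gamma I_T)^{-1}$ has diagonal entries $\lambda_i/(\lambda_i+\gamma)\in[0,1)$.

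I expect no genuine obstacle here. The only point worth a remark is that the constant $1$ in the second inequality is sharp only for $z$ in the closed left half-plane $\{\Re z\leq 0\}$ (which contains the case $z=-\gamma$, $\gamma>0$, as well as the ray $\RR^-$ along which the spectral-measure argument of Section~\ref{sec:proofs} is anchored); for a general $z\in\CC\setminus\RR^+$ one rather uses the identity $A(A-zI_T)^{-1}=I_T+z(A-zI_T)^{-1}$ together with the first bound, yielding $\|A(A-zI_T)^{-1}\|\leq 1+|z|\,{\rm dist}(z,\RR^+)^{-1}$, which is all that the Lipschitz-continuity estimates (e.g. in the proof of Corollary~\ref{cor:ST}) actually require, the constants there being allowed to depend on $z$.
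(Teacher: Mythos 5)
Your argument is correct, and the paper itself offers no proof to compare against: Lemma~\ref{lem:norm_control} is merely ``recalled'' in the appendix as an elementary identity. Your diagonalization reduction is the standard route, and the observation that conjugation by a real orthogonal $U$ preserves the modulus of the (complex) diagonal entries, via $M^\herm M=U|D|^2U^\trans$, is exactly the point one needs to make the reduction to scalars legitimate.

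More importantly, you have caught a genuine flaw in the statement as written: the bound $\|A(A-zI_T)^{-1}\|\leq 1$ is \emph{false} for general $z\in\CC\setminus\RR^+$. Your scalar computation shows it is equivalent to $2\lambda_i\Re(z)\leq |z|^2$ for every eigenvalue $\lambda_i\geq 0$, which can fail once $\Re(z)>0$; e.g.\ $T=1$, $A=1$, $z=1+i\varepsilon$ gives $\|A(A-z)^{-1}\|=1/\varepsilon$. The lemma is thus only valid on the closed left half-plane $\{\Re z\leq 0\}$, which covers the case $z=-\gamma$, $\gamma>0$, used throughout Section~\ref{sec:proofs}. For the one place where a general $z\in\CC\setminus\RR^+$ is needed (the Lipschitz estimate in the proof of Corollary~\ref{cor:ST}, which invokes $\|(R^\trans R-zI_T)^{-1}R^\trans R\|\leq 1$), your substitute $A(A-zI_T)^{-1}=I_T+z(A-zI_T)^{-1}$, hence $\|A(A-zI_T)^{-1}\|\leq 1+|z|\,{\rm dist}(z,\RR^+)^{-1}$, is the right fix: the constants there may depend on $z$, so the conclusion of that corollary is unaffected. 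This is a useful correction, not just a proof.
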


\bibliographystyle{imsart-nameyear}
\bibliography{/home/romano/Documents/PhD/phd-group/papers/rcouillet/tutorial_RMT/book_final/IEEEabrv.bib,/home/romano/Documents/PhD/phd-group/papers/rcouillet/tutorial_RMT/book_final/IEEEconf.bib,/home/romano/Documents/PhD/phd-group/papers/rcouillet/tutorial_RMT/book_final/tutorial_RMT.bib}

\end{document}